\definecolor{BlueGreen}{cmyk}{0.85,0,0.33,0}
\definecolor{dgreen}{rgb}{0.,0.6,0.}
\newtheorem{theorem}{Theorem}
\newtheorem{conjecture}{Conjecture}
\newtheorem{corollary}{Corollary}
\newtheorem{definition}{Definition}
\newtheorem{lemma}{Lemma}
\newtheorem{problem}{Problem}
\newtheorem{proposition}{Proposition}
\newtheorem{remark}{Remark}%[theorem] avec cette mention entre les 2 paires d'accolades, cela enumere sans difference
\newtheorem{theorem}{Theorem}[section]
\newtheorem{lemma}[theorem]{Lemma}
\newtheorem{proposition}[theorem]{Proposition}
\theoremstyle{definition}
\newtheorem{definition}[theorem]{Definition}
\newtheorem{remark}[theorem]{Remark}
\numberwithin{equation}{section}
\renewcommand{\epsilon}{\varepsilon}
\renewcommand{\wp}{\mathbf{p}}
\newcommand{\eps}{\varepsilon}
\newcommand{\calC}{\mathcal{C}}
\newcommand{\calG}{\mathcal{G}}
\newcommand{\calB}{\mathcal{B}}
\newcommand{\calD}{\mathcal{D}}
\newcommand{\calR}{\mathcal{R}}
\newcommand{\calT}{\mathcal{T}}
\newcommand{\R}{\mathbb{R}}
\renewcommand{\P}{\mathbb{P}}
\newcommand{\E}{\mathbb{E}}
\newcommand{\bal}{\bm{\alpha}}
\newcommand{\bbet}{\bm{\beta}}
\newcommand{\bs}{\mathbf{s}}
\newcommand{\bfr}{\mathbf{r}}
\newcommand{\wt}{\widetilde}
\newcommand{\wh}{\widehat}
\newcommand{\bone}{\mathbf{1}}
\newcommand{\La}{\Longrightarrow}
\newcommand{\bA}{\mathbf{A}}
\newcommand{\bB}{\mathbf{B}}
\newcommand{\bM}{\mathbf{M}}
\newcommand{\bho}{{$\mathbf{H1}$}}
\newcommand{\bht}{{$\mathbf{H2}$}}
\newcommand{\bhth}{{$\mathbf{H3}$}}
\newcommand{\bhf}{{$\mathbf{H4}$}}
\newcommand{\bhfi}{{$\mathbf{H5}$}}
\newcommand\calCi{\calC_{\mathrm{int}}}
\newcommand\calCo{\calC_{\mathrm{out}}}
\DeclareMathOperator{\dist}{dist}
\DeclareMathOperator{\sgn}{sgn}
\DeclareMathOperator{\var}{Var}
\DeclareMathOperator{\arccot}{arccot}
\begin{document}

\title{Knudsen gas in flat tire} 

\author{ Krzysztof Burdzy and Carl-Erik Gauthier}

\address{Department of Mathematics, Box 354350, University of Washington, Seattle, WA 98195}
\email{burdzy@uw.edu}
\email{carlgaut@uw.edu}

\thanks{KB's research was supported in part by Simons Foundation Grant 506732. \\
CEG's research was supported by the Swiss National Foundation for Research Grant P2NEP2\_171951 }

\pagestyle{headings}

\maketitle

 	\begin{abstract} 
We consider random reflections (according to the Lambertian distribution) of a light ray in a thin variable width (but almost circular) tube. As the width of the tube goes to zero, properly rescaled angular component of the 
light ray position converges in distribution to a diffusion whose parameters (diffusivity and drift) are given explicitly in terms of the tube width.
 		
 	\end{abstract}
 	\textbf{Keywords:} Stochastic billiard, invariance principle, Knudsen random walk, cosine distribution.
\section{Introduction}\

We will prove an invariance principle for  a light ray 
reflecting inside a very thin variable width (but almost circular) planar domain. The reflections are random and have the Lambertian distribution introduced in  \cite{L1760}. An alternative physical representation of the process is that of a gas molecule with a velocity so high that the effect of the gravitation is negligible.
In this alternative context, the Lambertian distribution is known as Knudsen's law, introduced in \cite{K1934}.

We will now present a (very) informal version of our main  result.
Consider a smooth function $h:\R\to[1,3]$ with period $2\pi$. For each $\eps\in(0,1/100)$, consider a planar domain  $\calD_\eps$ that is very close to a thin annulus with the center $(0,0)$ and radii close to 1, except that its width is $\eps h(\alpha)$, where $\alpha$ measures the angle along the tube in radians. Suppose that a light ray travels inside $\calD_\eps$ and reflects randomly according to the Lambertian distribution, i.e., the direction of the reflected trajectory forms an angle $\Theta$ with the inner normal to the boundary of $\calD_\eps$ and the density of $\Theta$  is proportional to $\cos \theta$. The directions of reflections are  independent. If $\bbet^\eps(t)$ denotes the angular coordinate of the light ray in the polar coordinates at time $t$ then properly rescaled process $\{\bbet^\eps(t), t\geq 0\}$ converges  in the Skorokhod topology, as $\epsilon$ goes to $0$, to the solution of
\begin{align}\label{j4.4}
dX_t =h'(X_t)dt + \sqrt{h(X_t)}dW_t,
\end{align}
where $W$ is  standard Brownian motion.

We will now discuss related results and motivation for this research.

The idea of multidimensional processes converging in distribution to a process on a lower dimensional manifold goes back at least to Katzenberger \cite{Katz91}. Roughly speaking, such convergence can be induced by a strong drift keeping multidimensional processes close to the manifold.

The reflection problem in thin domains was investigated in \cite{Jer,Zav06}. More specifically, this research was devoted to eigenfunctions of the Laplacian with Neumann boundary conditions. It was proved that when the width of the domain goes to 0, the eigenfunctions converge to those of a one-dimensional Sturm-Liouville operator. In our notation, the limiting operator could be expressed as $\Delta + \frac{h'(x)}{h(x)} \frac{d}{dx}$. This is strikingly close to \eqref{j4.4} in the following sense. We could time change the diffusion in \eqref{j4.4} so that it has the quadratic variation equal to 1. Then the time-changed process would correspond to the operator $\frac 12 \Delta + \frac{h'(x)}{h(x)} \frac{d}{dx}$. Whether the usual probabilistic  factor $\frac 12$ in front of the Laplacian is a real difference between the two operators or whether the two operators are actually equal under proper scaling, we are not able to determine due to considerable differences in the presentations of the models in \cite{Jer,Zav06} and in our paper. Either way, we consider it remarkable that significantly different families of processes (reflected Brownian motions and Knudsen random walks) have  limits that are so closely related.

There has been recently interest in billiards in fractal domains. The authors
of \cite{LN1,LN2} take the ``classical'' approach in which the reflection
is specular, that is, the angle of reflection is the same as the angle of
incidence. This idea can be applied in ``prefractals'' approximating,
for example, the von Koch snowflake, and then one can hope to pass
to the limit, in some sense.
Another approach, based on Lambertian reflections, was taken in \cite{CPSV09,CPSV10}.
It was proved in \cite{ABS13} that Lambertian reflections are the only physically realistic reflections if the distribution of the reflected path does not depend on the location of reflection and the incidence angle. As a prelude to the study of fractal domains, the authors of \cite{BT2,BT16} investigated Lambertian reflections in thin tubes; this  shed a light on the distribution of light rays leaving crevices in fractal domains. The present project may be considered as a continuation of \cite{BT2,BT16} although no invariance principle was proved in those papers.

The present article is focused on two-dimensional domains only, as a result of research results in \cite{BT2,BT16}. It was shown in \cite{BT16} that Knudsen's random walk in a two-dimensional tube has steps with infinite variance but the step distribution is nevertheless in the domain of attraction of the normal law---a rare occurrence in probability literature. The variance of the  steps is finite in dimensions 3 and higher, so less interesting (see \cite{BT2}). Moreover, formulas become cumbersome in higher dimensions. The same remarks explain why we put our process inside a circular tube rather than a straight tube with variable width. In the latter case steps could have infinite variance or, in some cases, the light ray could escape to infinity in one go.

 The invariance principle or, at least, the central limit theorem,  for billiards has received some attention when the reflection is random (see \cite[Theorems 2.1 and 2.2]{CPSV10} or \cite[Theorem 3]{CFZ16} for the case when the reflecting angle is chosen among finitely many) or deterministic when the domain has cusps (see \cite{BCD11}). In those invariance principles the domain is fixed and  time is accelerated.

Interest in stochastic billiards arose when researchers started to investigate deterministic billiards with microscopic irregularities at the boundary (see e.g \cite{ CF12, FY04, F07}). Instead of zooming in on these irregularities to do  deterministic analysis, the idea was to consider irregularities as points of random refections.  It turns out that the Lambertian distribution is the invariant and  ergodic probability measure for such random processes, in an appropriate sense (see e.g. \cite{CF12, DLL13}). 

\medskip

On the technical side, we will use two classical versions of the invariance principle, available in \cite{EthierKurtz}. The main effort will be in verifying the assumptions of those theorems. The ballistic character of our process and the smoothness of the boundary make the calculations harder than in the Brownian case---a situation that seems paradoxical but it is well known in other  contexts.

\subsection{Organization of the paper} 

Sections \ref{ResultAnnulus}-\ref{o22.7}
are devoted to the simplified model, in which the domain is a true annulus, i.e., its two parts of the boundary are concentric circles. This may be helpful to the reader as our general result, presented and proved in Sections \ref{DistortedTyre}-\ref{j4.5}, has a proof that contains many details which obscure the basic strategy.

We would like to point out  Proposition \ref{Distribution}, a result that may have a separate interest. It holds only in the case when the domain is a true annulus.

\section{Reflections in an annulus: model and results}\label{ResultAnnulus}

Given $r>0$ and $\eps\in(0,1)$, let
\begin{equation}
\mathcal{D}(\epsilon , r)=\left\{(x,y)\in\mathbb{R}^2: (r-\eps)^2\leq x^2+y^2\leq r^2\right\}.
\end{equation}
We will use $\calC((x,y),r)$ to denote the circle with center $(x,y)$ and radius $r$. We will refer to $\calCi:=\calC((0,0),r-\eps)$ as the inner boundary of $\mathcal{D}(\epsilon , r)$ and  to $\calCo:=\calC((0,0),r)$ as the outer boundary of $\mathcal{D}(\epsilon , r)$.

We will consider
a ray of light  traveling inside $\mathcal{D}(\epsilon , r)$ and reflecting from the boundary. Its position at time $t\geq 0 $ will be denoted
\begin{align}\label{o13.1}
Q(t) =\bfr(t) \left(\cos\bbet(t),\sin\bbet(t)\right).
\end{align}

We give a label to the following assumption for later reference.
\medskip

{\narrower\narrower
 \noindent
($\mathbf{A}$)
We will assume that the light ray always travels with speed 1. 
Every time the light ray is reflected, the reflection angle is independent from the past trajectory and has the Lambertian distribution, i.e., the reflection angle $\Theta$ with respect to the inner normal vector at the point of reflection has the probability density given by
\begin{equation}\label{LambertianDistrib}
\P(\Theta \in d\theta) =
\frac{1}{2}\cos(\theta)d\theta \qquad \text{  for  } \theta \in(-\pi/2, \pi/2).
\end{equation}
}

It is easy to see that the light ray process is invariant under scaling, i.e., if  the process in  
$\mathcal{D}(\epsilon , r)$ is denoted $\{\bfr(t) \left(\cos\bbet(t),\sin\bbet(t)\right), t\geq 0 \}$ 
then for $c>0$,
\begin{align*}
\{c\bfr(t/c) \left(\cos\bbet(t/c),\sin\bbet(t/c)\right), t\geq 0 \}
\end{align*}
is the analogous process in 
$\mathcal{D}(c\epsilon , cr)$. For this reason, we will assume that the light ray travels inside $\mathcal{D}(\epsilon , 1)$ in Sections \ref{ResultAnnulus}-\ref{o22.7}. Since $\eps>0$ remains the only parameter, we will incorporate it in the notation by writing  $\{\bfr^\eps(t) \left(\cos\bbet^\eps(t),\sin\bbet^\eps(t)\right), t\geq 0 \}$.

We  now  state our main result on reflections in an annulus.

\begin{theorem}\label{MainTHM}
Processes $\left\{\bbet^\eps\left(\frac{\pi }{\eps\log(1/\eps)}t\right), t\geq 0\right\}$
converge in law to  Brownian motion in the Skorokhod topology as $\epsilon$ goes to $0$.

\end{theorem}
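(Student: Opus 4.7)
The plan is to reduce the continuous-time process $\bbet^\eps$ to the embedded Markov chain at reflection times and then invoke a Markov-chain invariance principle from Ethier--Kurtz. Let $0=\tau_0<\tau_1<\tau_2<\cdots$ be the reflection times, set $\bbet_n^\eps := \bbet^\eps(\tau_n)$, $T_n^\eps := \tau_{n+1}-\tau_n$, and $\Delta_n^\eps := \bbet_{n+1}^\eps-\bbet_n^\eps$. Because $\mathcal{D}(\eps,1)$ is rotationally symmetric and the Lambertian density in \eqref{LambertianDistrib} is symmetric about the normal, the conditional law of $\Delta_n^\eps$ given the past is symmetric around $0$, so $\bbet_n^\eps$ is a martingale. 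The diffusive scaling is then controlled by $\E[(\Delta_n^\eps)^2\wedge M^2]/\E[T_n^\eps]$ for suitable truncation $M$, together with a tail bound on $\Delta_n^\eps$.

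The geometric content comes from an explicit parametrization. For a reflection from either boundary at angle $\Theta\in(-\pi/2,\pi/2)$ from the inner normal, elementary trigonometry in $\mathcal{D}(\eps,1)$ gives the chord length $\ell^\eps(\Theta)$ and the angular jump $\Delta^\eps(\Theta)$ as explicit functions. Near normal incidence ($\Theta\approx0$) one has $\ell^\eps\asymp\eps$ and $\Delta^\eps\asymp\eps$. The delicate contribution is from near-grazing reflections: when $|\Theta|$ is close to $\pi/2$, a ray from the inner boundary travels along a chord of length of order $\sqrt{\eps/\cos\Theta}$ (up to the critical angle where it misses the outer circle entirely and hits the inner circle after sweeping a macroscopic angle). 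Integrating $(\Delta^\eps)^2$ against the Lambertian density $\tfrac12\cos\theta\,d\theta$ then produces, via the elementary identity $\int_0^{1}\cos\theta\,d\theta\cdot(\text{something of order }\eps/\cos\theta)$, a logarithmic divergence that yields
\[
\E\bigl[(\Delta_n^\eps)^2 \wedge 1\bigr] \;\asymp\; \eps\log(1/\eps),
\qquad
\E[T_n^\eps] \;\asymp\; \eps,
\]
with the precise constant $\pi$ arising from the Lambertian normalization. This is exactly the scaling that makes $\eps\log(1/\eps)/\pi$ the right time change per reflection and identifies Brownian motion (diffusivity $1$) as the limit.

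The formal invariance principle from Ethier--Kurtz then requires verifying three inputs: (i) the truncated quadratic variation $\sum_{k\leq n(t/\eps)}\E[(\Delta_k^\eps)^2\wedge\delta^2\mid\calF_k]$ converges in probability to $t$ after the chosen time normalization, for every fixed $\delta>0$; (ii) negligibility of large jumps, i.e. the number of $k\leq n(t/\eps)$ with $|\Delta_k^\eps|>\delta$ is negligible in probability, which reduces to showing $\P(|\Delta^\eps|>\delta) = o(1/\log(1/\eps))$ per step; (iii) tightness of the reflection-counting process, i.e.~that the number of reflections in $[0,t\pi/(\eps\log(1/\eps))]$ is concentrated around its mean $t/(\eps\log(1/\eps))\cdot \pi/\E[T_n^\eps]$. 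Items (i) and (ii) follow from the explicit geometric formulas and the Lambertian density; item (iii) is a standard renewal-type estimate since $T_n^\eps$ is bounded above by the diameter of $\mathcal{D}(\eps,1)$ and has mean of order $\eps$.

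The main obstacle is the careful bookkeeping in the moment computation in the second paragraph: because the chord length, the angular jump, and the Lambertian density each contribute nontrivially and the integrand has a nonintegrable tail at $\Theta\to\pm\pi/2$ that is cut off by the geometry of the annulus precisely at the scale $\cos\Theta\asymp\eps$, the logarithmic factor and the exact prefactor $\pi$ emerge only after a delicate expansion. A secondary subtlety is that one must track the moments conditionally on which boundary the light ray is currently on (and possibly on $\bfr^\eps$), and show that these two Markov states equilibrate on a scale much faster than the diffusive one so that a single effective diffusivity governs the limit. Proposition \ref{Distribution}, alluded to in the introduction, is presumably the tool that handles this last point by identifying the invariant measure of the reflection chain explicitly.
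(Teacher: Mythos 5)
Your overall strategy matches the paper's: pass to the embedded reflection chain, identify the angular coordinate as a martingale by rotational symmetry, compute its quadratic variation and the mean flight time, and feed both into a functional martingale CLT from Ethier--Kurtz together with a law-of-large-numbers statement for the number of reflections. This is exactly the architecture of Sections 2--3.

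However, there are two substantive errors in the geometric heuristics, one of which propagates into an order-of-magnitude error in the key moment estimate.

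\textbf{The moment scaling is off by a factor of $\eps$.} You claim $\E\bigl[(\Delta_n^\eps)^2\wedge 1\bigr]\asymp \eps\log(1/\eps)$ and $\E[T_n^\eps]\asymp\eps$. The correct order, proved in the paper's Lemma \ref{variance2}, is $\E\bigl[(T_1^\eps)^2\bigr]\sim\tfrac12\eps^2\log(1/\eps)$. Your own sanity check exposes the inconsistency: with the claimed scaling, the diffusivity per unit real time would be $\E[(\Delta^\eps)^2]/\E[T^\eps]\asymp\log(1/\eps)\to\infty$, and one would then have to \emph{slow down}, not speed up, time to obtain Brownian motion; this contradicts the time change $t\mapsto \pi t/(\eps\log(1/\eps))$ in the theorem, which accelerates time. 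With the corrected $\eps^2\log(1/\eps)$, the diffusivity per unit real time is $\asymp\eps\log(1/\eps)\to 0$, and accelerating by $1/(\eps\log(1/\eps))$ is exactly right, with the $\pi$ coming from $\E\bigl[\tfrac{1}{\cos\Theta}\bigr]=\pi/2$ in the flight-time computation (Lemma \ref{EstimParam}). The source of your slip is visible in your displayed ``identity'': the angular jump is $\Delta^\eps(\theta)\asymp\eps\tan\theta$, so $(\Delta^\eps)^2\asymp \eps^2/\cos^2\theta$ and $\int\tfrac12\cos\theta\,(\Delta^\eps)^2\,d\theta\asymp\eps^2\int\sec\theta\,d\theta$; you dropped one power of $\eps$ in squaring.

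\textbf{The geometric picture of grazing rays is wrong.} You write that a ray from the inner boundary near grazing incidence can ``miss the outer circle entirely and hit the inner circle after sweeping a macroscopic angle.'' In the annulus, a ray leaving the inner circle always hits the outer circle next, and it never sweeps a macroscopic angle: Lemma \ref{Support} shows that the angular jump of \emph{every} step is bounded by $2\arctan(\sqrt{2\eps-\eps^2})=O(\sqrt{\eps})$. The annulus's curvature cuts off grazing chords at length $O(\sqrt{\eps})$ precisely when $\cos\theta\asymp\sqrt{\eps}$; that cutoff is what truncates $\int\sec\theta\,d\theta$ to produce the $\log(1/\eps)$ factor, rather than a heavy tail. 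As a consequence, your truncation ``$\wedge 1$'' is vacuous (all jumps are $\ll 1$), and the ``negligibility of large jumps'' condition in your item (ii), which you want to establish per step, holds deterministically. This matters because the infinite-variance / heavy-tail phenomenon you seem to have in mind occurs for Knudsen walks in an infinite straight strip (see the discussion of \cite{BT2,BT16} in the introduction), not in the circular tube, and the paper chooses the circular geometry precisely to avoid it.

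One smaller point: Proposition \ref{Distribution} is not the tool that identifies the invariant measure of the two-state boundary chain $\bs^\eps_n$, which is computed directly from Lemma \ref{remain}. Its role is to show that the law of the angular increment from the inner-to-outer jump ($T^\eps_n$) equals that of the outer-to-inner jump ($R^\eps_n$), via time reversibility of the reflection chain; this symmetrizes the bookkeeping so that, after conditioning on the boundary label, the quadratic variation depends on a single moment $\E[(T^\eps_1)^2]$ up to lower-order corrections.
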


The proof will be given at the end of Section \ref{o22.7}.

\section{Reflections in an annulus: proofs}\label{o22.7}

We start with some notation. We will write $\bone_a(b) = 1$ if $a=b$ and  $\bone_a(b) = 0$ otherwise. Similarly, for a set $A$, we will say $\bone_A(b) = 1$ if $b\in A$ and  $\bone_A(b) = 0$ otherwise.

We will define a number of objects needed in the proofs.
We will assume that the light ray is on the boundary of 
$\mathcal{D}(\epsilon , 1)$ at time $t=0$, as it clearly  does not affect
the validity of Theorem \ref{MainTHM}.

 We will encode the $n$-th reflection point as 
\begin{equation}\label{o13.2}
 (1-\bs^\eps_n\epsilon)\left(\cos(\bal^\eps_n),\sin(\bal^\eps_n)\right),
\end{equation}
where $\bs^\eps_n$ can be 0 or 1, and $\bal^\eps_n\in \R$ is chosen for $n\geq 0$ so that $|\bal^\eps_{n+1} - \bal^\eps_n| < \pi$.
By convention, the first reflection  occurs at time $t=0$.

It is clear that $\{(\bal^\eps_n,\bs^\eps_n), n\geq 0\}$ is a time homogeneous discrete time Markov chain.

Since the light ray travels  with speed 1, the time  between the $k$-th and $(k-1)$-st reflections can be calculated as 
\begin{align}\label{o19.1}
\Delta\calT^\eps_{k} := 
\left| (1-\bs^\eps_{k}\epsilon)\left(\cos(\bal^\eps_{k}),\sin(\bal^\eps_{k})\right)
-  (1-\bs^\eps_{k-1}\epsilon)\left(\cos(\bal^\eps_{k-1}),\sin(\bal^\eps_{k-1})\right)
\right|.
\end{align}
Set $\mathcal{T}_0^\epsilon=0$ and for $n\geq 1$,
\begin{align}\label{o22.8}
\mathcal{T}_n^\epsilon=\sum_{k=1}^n\Delta\calT_k^\epsilon.
\end{align}
Given $t>0$ and $\eps\in(0,1)$, let
\begin{equation}\label{TimeInverse}
N^\epsilon(t)=\inf\left\{n\geq 0 \; :\; \mathcal{T}_{n+1}^\epsilon> t\right\}=\sup\left\{n\geq 0 \; :\;  \mathcal{T}_{n}^\epsilon\leq t \right\}.
\end{equation}
Then $N^\epsilon(t)$ is  the number of reflections made by the light ray before time $t$, while $\mathcal{T}_n^\epsilon$ represents the time of the $n$-th reflection. With this notation, using \eqref{o13.1}, we can rewrite \eqref{o13.2} as
\begin{equation}\label{o13.3}
Q(\mathcal{T}_n^\epsilon)=
 (1-\bs^\eps_n\epsilon)\left(\cos(\bal^\eps_n),\sin(\bal^\eps_n)\right).
\end{equation}

We will derive formulas linking the angle of reflection $\Theta$ with the increment of angle $\bbet$ between reflections. 
Since $\{(\bal^\eps_n,\bs^\eps_n), n\geq 0\}$ is a time homogeneous Markov chain, it will suffice to analyze $\bal^\eps_1-\bal^\eps_0$.
By rotation invariance of the process, we may and will assume without loss of generality that $\bal^\eps_0=\pi/2$.
Set
 \begin{equation}\label{o15.1}
a= a(\theta)=\tan \left(\frac{\pi}{2}-\theta\right)=1/\tan(\theta) = \cot(\theta).
\end{equation}

 Suppose that $\bs^\eps_0 = 1$, i.e., the light ray starts at the inner circle. 
Then the next reflection must be on the outer circle. If $\Theta =0$ then  $\bal^\eps_1-\bal^\eps_0=0$.

We will denote the coordinates of the second reflection point $(x,y)= (x(\Theta), y(\Theta))$, i.e.,
\begin{equation}
(x(\Theta), y(\Theta))= (1-\bs^\eps_1\epsilon)\left(\cos(\bal^\eps_1),\sin(\bal^\eps_1)\right).
\end{equation}
Then
\begin{equation}\label{NPfromInner}
\bal^\eps_1-\bal^\eps_0= 
 \arctan\left(\frac {x(\Theta)}{y(\Theta)}\right)
= 
 \arctan\left(\frac {x(\Theta)}{a(\Theta)x(\Theta) + 1-\eps}\right).
\end{equation}
If $\Theta \neq 0$, then $x=x(\Theta)$ is the solution of 
\begin{equation}
x^2+y^2=
x^2+(ax+1-\eps)^2=1
\end{equation}
such that $xa > 0$. Elementary computations yield
\begin{equation}\label{xreps}
x= 
\frac{-a(1-\eps)+\sgn(a)\sqrt{a^2-\eps^2+2 \eps}}{1+a^2}.
\end{equation}
For $a>0$, we obtain the following formula using \eqref{o15.1},
\begin{align}\label{o15.4}
x(\theta)&= 
\frac{-a(\theta)(1-\eps)+\sqrt{a(\theta)^2-\eps^2+2 \eps}}{1+a(\theta)^2}
= \frac{-\cot(\theta)(1-\eps)+\sqrt{\cot(\theta)^2-\eps^2+2 \eps}}{1+\cot(\theta)^2}\nonumber\\
&= \sin(\theta)\cos(\theta)
\left(-1+\eps + \sqrt{1+(2-\eps)\eps\tan^2(\theta)}\right).
\end{align}

Suppose that $\bs^\eps_0 = 0$, i.e., the light ray starts at the outer boundary. 
Then the next reflection may occur at the outer or inner boundary.

\begin{lemma}\label{critRemain} 
If $\bs^\eps_0 = 0$ then $\bs^\eps_1 = 0$ if and only if 
$$a(\Theta)^2 < \frac{2 \eps-\eps^2}{(1-\eps)^2}.$$
\end{lemma}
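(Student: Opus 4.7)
The plan is to use a simple geometric criterion: the reflected ray fails to hit the inner circle $\calCi$ if and only if its perpendicular distance from the origin exceeds $1-\eps$. Once that distance is expressed in terms of $\Theta$, converting to a condition on $a(\Theta)=\cot\Theta$ is routine.

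First, I would use the rotational invariance of the problem (as noted before the statement) to place the first reflection point at $(0,1)\in\calCo$, so that the inward normal at that point is $(0,-1)$. The reflected direction making angle $\Theta$ with the inward normal is then $(\sin\Theta,-\cos\Theta)$, and the ray is parametrized by $\gamma(t) = (t\sin\Theta,\, 1 - t\cos\Theta)$ for $t\geq 0$. A direct minimization of $|\gamma(t)|^{2} = t^{2}\sin^{2}\Theta + (1-t\cos\Theta)^{2}$ gives optimal $t=\cos\Theta$ and minimum squared distance $\sin^{2}\Theta$, so the minimum distance from the ray to the origin is $|\sin\Theta|$.

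Next, I would observe that the ray hits $\calCi$ (so $\bs^\eps_1 = 1$) if and only if its distance to the origin is at most $1-\eps$; otherwise the ray continues until it returns to $\calCo$, giving $\bs^\eps_1=0$. Therefore $\bs^\eps_1=0$ iff $|\sin\Theta|>1-\eps$, i.e.\ $\sin^{2}\Theta > (1-\eps)^{2}$.

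Finally, translating to $a(\Theta)=\cot\Theta$ using $\sin^{2}\Theta = 1/(1+\cot^{2}\Theta) = 1/(1+a^{2})$, the condition $\sin^{2}\Theta>(1-\eps)^{2}$ becomes $1+a^{2} < 1/(1-\eps)^{2}$, which rearranges to
\[
a(\Theta)^{2} \;<\; \frac{1}{(1-\eps)^{2}} - 1 \;=\; \frac{2\eps-\eps^{2}}{(1-\eps)^{2}},
\]
as required. There is no real obstacle here; the only thing to be careful about is the sign of $\sin\Theta$ (the minimum distance is $|\sin\Theta|$, not $\sin\Theta$), which is handled by squaring.
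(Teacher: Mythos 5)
Your proof is correct and follows essentially the same geometric idea as the paper's: the ray starting at $(0,1)$ misses the inner circle iff its minimum distance to the origin exceeds $1-\eps$. The paper encodes this by writing the intersection quadratic $x^2+(ax+1)^2=(1-\eps)^2$ and checking the sign of its discriminant, while you compute the minimum distance $|\sin\Theta|$ explicitly and convert via $\sin^2\Theta = 1/(1+a^2)$; these two computations are equivalent, with yours perhaps a touch more geometric in flavor.
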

\begin{proof} The light ray hits the inner boundary if and only if there is a solution to 
\begin{equation}
x^2+(ax+1)^2=(1-\eps)^2.
\end{equation}
This equation has a solution if and only if 
\begin{equation}\label{discrim}
a^2 - 2 \eps - 2 a^2 \eps + \eps^2 + a^2 \eps^2 \geq 0,
\end{equation}
i.e., if and only if
\begin{equation}
a^2\geq  \frac{2 \eps-\eps^2}{(1-\eps)^2}.
\end{equation}
\end{proof}

\begin{lemma}\label{remain} 
We have $\P(\bs^\eps_1 = 0 \mid \bs^\eps_0 = 0)=\eps$.
\end{lemma}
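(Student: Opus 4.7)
The plan is to translate the condition of Lemma \ref{critRemain} into a simple condition on $\sin\Theta$, and then integrate the Lambertian density on the resulting set.

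First, I would rewrite the inequality $a(\Theta)^2 < (2\eps-\eps^2)/(1-\eps)^2$ from Lemma \ref{critRemain}. Since $a(\theta)=\cot(\theta)$, adding $1$ to both sides and using $\cot^2\theta + 1 = \csc^2\theta$ gives
\[
\csc^2(\Theta) < \frac{(2\eps-\eps^2) + (1-\eps)^2}{(1-\eps)^2} = \frac{1}{(1-\eps)^2},
\]
which, since $\Theta\in(-\pi/2,\pi/2)$, is equivalent to $|\sin\Theta|>1-\eps$. In other words, combined with Lemma \ref{critRemain},
\[
\{\bs^\eps_1 = 0\} \;=\; \{|\sin\Theta| > 1-\eps\}.
\]

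Next, I would compute the probability of this event under the Lambertian law from \eqref{LambertianDistrib}. By symmetry of the density $\tfrac{1}{2}\cos\theta$ about $0$ and the substitution $u=\sin\theta$,
\[
\P(|\sin\Theta|>1-\eps) = 2\int_{\arcsin(1-\eps)}^{\pi/2}\tfrac{1}{2}\cos\theta\,d\theta = \sin\theta\Big|_{\arcsin(1-\eps)}^{\pi/2} = 1-(1-\eps) = \eps,
\]
which gives the claim. There is no real obstacle here: the only place where one has to be careful is the algebraic manipulation at the first step (adding $1$ and recognizing $\csc^2$), and then making sure that the Lambertian density integrates cleanly over the symmetric arc near $\pm\pi/2$, which it does because $\cos\theta$ is the derivative of $\sin\theta$.
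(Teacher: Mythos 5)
Your proof is correct and follows essentially the same route as the paper: invoke Lemma \ref{critRemain} and integrate the Lambertian density over the resulting arc near $\pm\pi/2$. The only difference is cosmetic — you simplify $a(\Theta)^2<(2\eps-\eps^2)/(1-\eps)^2$ via $\cot^2+1=\csc^2$ to the cleaner form $|\sin\Theta|>1-\eps$ (which also makes the answer transparent, since $\sin\Theta$ is uniform on $(-1,1)$ under the cosine law), whereas the paper keeps the $\arctan$ form and evaluates $\cos(\arctan(x))=(1+x^2)^{-1/2}$ at the end.
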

\begin{proof}
Set $\gamma(\epsilon )=\arctan\left(\sqrt{(2 \eps-\eps^2)/((1-\eps)^2})\right)$. Then, 
by Lemma \ref{critRemain} and using the fact that $\cos(\arctan(x))=(1+x^2)^{-1/2}$, we have
\begin{align*}
\P(\bs^\eps_1 = 0 \mid \bs^\eps_0 = 0)&=\mathbb{P}\left(\vert a(\Theta)\vert < \sqrt{\frac{2 \eps-\eps^2}{(1-\eps)^2}}\right)\notag
= \mathbb{P}\left(\vert \Theta\vert \in \left[\frac{\pi}{2}- \gamma(\epsilon), \frac{\pi}{2}\right]\right)\notag\\
&= 2\int_{\frac{\pi}{2}-\gamma(\epsilon )}^{\frac{\pi}{2}} \frac{1}{2}\cos(\theta)d\theta
= 1-\cos \gamma(\epsilon )
= 1-\left(1+\frac{2 \eps-\eps^2}{(1-\eps)^2}\right)^{-1/2}= \eps.
\end{align*}
\end{proof}

The following representation of the process 
$\{(\bal^\eps_n,\bs^\eps_n), n\geq 0\}$ will be useful. 

\begin{definition}\label{o18.1}

Let $T^\eps_n, n\geq 1$, be i.i.d.~random variables with the distribution of 
$\bal^\eps_1 - \bal^\eps_0$ conditioned on $\{\bs^\eps_0 = 1\}$, i.e., on the event that the light ray starts from the inner boundary.

Let $R^\eps_n, n\geq 1$, be i.i.d.~random variables with the distribution of 
$\bal^\eps_1 - \bal^\eps_0$ conditioned on $\{\bs^\eps_0 = 0, \bs^\eps_1= 1\}$, i.e., on the event that the light ray starts from the outer boundary and the next reflection is on the inner boundary.

Let $S^\eps_n, n\geq 1$, be i.i.d.~random variables with the distribution of 
$\bal^\eps_1 - \bal^\eps_0$ conditioned on $\{\bs^\eps_0 = 0, \bs^\eps_1= 0\}$, i.e., on the event that the light ray starts from the outer boundary and the next reflection is also on the outer boundary.

Let $\Lambda^\eps_n, n\geq 1$, be i.i.d.~random variables (``Bernoulli sequence'') with the distribution given by
$\P(\Lambda^\eps_n = 1)=1- \P(\Lambda^\eps_n = 0)=\eps$.

We assume that all  random variables 
defined above, for all $n\geq 1$ and $\eps\in(0,1)$, are jointly independent.
\end{definition}
  
We can represent the process $\{(\bal^\eps_n,\bs^\eps_n), n\geq 0\}$ as follows. For $n\geq 0$,
\begin{equation}\label{InducedMC0}
\left\{ 
\begin{array}{l}
\bs_{n+1}^\epsilon = (1-\bs_n^\epsilon)(1-\Lambda_{n+1}^\epsilon),\\
\bal_{n+1}^\epsilon = \bal_n^\epsilon + T_{n+1}^\epsilon \bs_n^\epsilon
+(1-\bs_n^\epsilon)\left(\Lambda_{n+1}^\epsilon S_{n+1}^\epsilon+(1-\Lambda_{n+1}^\epsilon)R_{n+1}^\epsilon\right).
\end{array}
\right.
\end{equation}

We record the following property of random variables $T^\eps_n$ and $ R^\eps_n$ because it is useful in our arguments but we also find the property interesting on its own.

\begin{proposition}\label{Distribution} 
Random variables  $T^\epsilon_n$ and $ R^\epsilon_n$
have the same distribution.
\end{proposition}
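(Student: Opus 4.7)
The plan is to establish a measure-preserving bijection between chords crossing the annulus, emitted from the inner boundary under the Lambertian law, and chords emitted from the outer boundary and conditioned to reach the inner boundary. Under this bijection the angular increments along the chord will coincide up to sign, and the Lambertian symmetry $\Theta \overset{d}{=} -\Theta$ will finish the argument.

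First, I would apply the law of sines in the triangle formed by the origin and the two endpoints $P, Q$ of any chord with $P\in\calCi$ and $Q\in\calCo$. Letting $\theta_I$ and $\theta_O$ denote the angles the chord makes with the inner normals at $P$ and $Q$ respectively, this yields the conservation relation
\begin{equation*}
\sin \theta_O = \pm (1-\eps)\sin \theta_I,
\end{equation*}
(the sign depending on a consistent tangent orientation convention). This relation is a smooth bijection between $\theta_I\in(-\pi/2,\pi/2)$ and $\theta_O\in(-\arcsin(1-\eps),\arcsin(1-\eps))$, and the latter interval is, up to the sign convention, precisely the set of reflection angles at $\calCo$ for which the light ray reaches $\calCi$ (Lemma \ref{critRemain}).

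Second, I would push the Lambertian density forward through this bijection. Differentiating gives $\cos \theta_O \, d\theta_O = \pm (1-\eps)\cos \theta_I \, d\theta_I$, so
\begin{equation*}
\tfrac12 \cos \theta_I \, d\theta_I \;=\; \tfrac{1}{2(1-\eps)}\cos \theta_O \, d\theta_O.
\end{equation*}
The right-hand side is exactly the Lambertian density on $\Theta$ restricted to $(-\arcsin(1-\eps), \arcsin(1-\eps))$ and renormalized by the factor $1-\eps$ from Lemma \ref{remain}, i.e.\ the conditional law of $\Theta$ given $\{\bs^\eps_1 = 1\}$. Hence the map $\theta_I \mapsto \theta_O$ transports the law of $\Theta$ under $\{\bs^\eps_0 = 1\}$ to the law of $\Theta$ under $\{\bs^\eps_0 = 0, \bs^\eps_1 = 1\}$.

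Third, I would relate the angular increments. Write $T^\eps_n = \phi(\Theta_I)$ and $R^\eps_n = \psi(\Theta_O)$ for the corresponding deterministic functions. Starting from a chord realizing $T^\eps_n$ (inner endpoint at angular position $\pi/2$, outer endpoint at $\pi/2 + \phi(\theta_I)$), a rotation of the whole annulus by $-\phi(\theta_I)$ produces a chord with outer endpoint at angular position $\pi/2$ and inner endpoint at $\pi/2 - \phi(\theta_I)$, preserving the angle $\theta_O$ at the outer end. Traversed outer-to-inner, this rotated chord is exactly the geometric configuration defining $R^\eps_n$, with increment $\psi(\theta_O) = -\phi(\theta_I)$. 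Combined with the distributional identification from step two, this gives $R^\eps_n \overset{d}{=} -T^\eps_n$, and then the symmetry $\Theta \overset{d}{=} -\Theta$ together with the reflection symmetry $\phi(-\theta) = -\phi(\theta)$ of the geometry yields $T^\eps_n \overset{d}{=} R^\eps_n$.

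The main obstacle is step three, namely keeping sign conventions consistent in the rotation argument and confirming that the rotated chord is genuinely sampled by the conditional outer-to-inner mechanism. An alternative, more computational route would be to use the explicit formula \eqref{o15.4} for the inner-to-outer chord coordinates, derive the analogous formula for the outer-to-inner case, and verify by direct substitution $\sin \theta_O = \pm(1-\eps)\sin \theta_I$ that the resulting $\arctan$ expressions agree in absolute value; this avoids the rotation bookkeeping but obscures the underlying detailed-balance-type identity.
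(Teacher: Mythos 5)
Your proposal is correct, and it takes a genuinely different route from the paper. The paper proves reversibility by citing \cite{CPSV09} for the facts that the Markov chain of reflection points has a uniform stationary distribution and is self-dual under time reversal, and then invokes the ergodic theorem to equate long-run frequencies of inner-to-outer and outer-to-inner jumps with matching angular increments. Your argument is instead a direct, chord-by-chord detailed-balance computation: the law of sines in the triangle $O,P,Q$ gives $\sin|\theta_O|=(1-\eps)\sin|\theta_I|$, whose differential $\cos\theta_O\,d\theta_O=(1-\eps)\cos\theta_I\,d\theta_I$ is precisely the Jacobian identity that transports the full Lambertian density at the inner point to the conditional (crossing) Lambertian density at the outer point, with the normalizing constant $1-\eps$ already supplied by Lemma \ref{remain}. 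Combined with the observation that reversing the traversal of a fixed chord negates the angular increment, and with the symmetry $\Theta\overset{d}{=}-\Theta$, this gives the result. What you gain is a self-contained, elementary proof that does not rely on an external reversibility theorem or on ergodic averaging; it also makes the detailed-balance mechanism completely explicit, which the paper's abstract proof obscures. What the paper's approach buys is robustness: it does not use the exact circular geometry and so extends to more general convex domains, which is why the authors likely preferred it. Your only soft spot is the sign bookkeeping in step three, which you flag yourself; it does work out (with $\alpha_0=\pi/2$ and counterclockwise positive, $\theta_I>0$ forces $\theta_O<0$ and $T>0$, $R=-T<0$, consistent with $\psi(\theta_O)=-\phi(\theta_I)$), but a careful write-up would need to fix these conventions explicitly rather than leaving the $\pm$ floating.
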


\begin{proof}
Recall the notation from \eqref{o13.3}. The following claims follow from \cite[Th. 2.1]{CPSV09}.
The discrete Markov chain $\{Q(\mathcal{T}_n^\epsilon), n\geq 0\}$ representing consecutive reflection locations
has a stationary distribution. (The stationary distribution is uniform on the boundary of 
$\mathcal{D}(\epsilon , 1)$ but this is not relevant in this proof.) The Markov chain is symmetric (see the first displayed formula on page 507 of \cite{CPSV09}) and its time reversal has the same distribution as the process itself. Consider any $-\infty<b_1< b_2 <\infty$ and let $N_+(b_1,b_2, t)$ be the number of $n$ such that $\mathcal{T}_{n+1}^\epsilon \leq t$, $Q(\mathcal{T}_n^\epsilon)\in \calCi$, $Q(\mathcal{T}_{n+1}^\epsilon)\in \calCo$ and $\bal^\eps_{n+1} - \bal^\eps_n \in (b_1, b_2)$. By the ergodic theorem, $\lim_{t\to \infty} N_+(b_1,b_2, t)/t \to \ell_+\in[0,\infty)$.

We will apply the same argument to the ``reversed events.''
Let $N_-(b_1,b_2, t)$ be the number of $n$ such that $\mathcal{T}_{n+1}^\epsilon \leq t$, $Q(\mathcal{T}_n^\epsilon)\in \calCo$, $Q(\mathcal{T}_{n+1}^\epsilon)\in \calCi$ and $\bal^\eps_{n+1} - \bal^\eps_n \in (-b_2, -b_1)$. By the ergodic theorem, $\lim_{t\to \infty} N_-(b_1,b_2, t)/t \to \ell_-\in[0,\infty)$.
Since the time reversed process has the same distribution as the original one, $\ell_- = \ell_+$. 

The above observations, the symmetry of the reflection angle and the rotation invariance of the model easily imply the lemma.
\end{proof}

The proof of Proposition \ref{Distribution} is based on the symmetry of the process of Lambertian reflections, i.e., the fact that the time reversed process has the same distribution as the original one. This symmetry is not obvious so we will present a physical heuristic argument which  makes this symmetry plausible.
 It has been proved in \cite{ABS13} that (random) Lambertian reflections can be approximated by (deterministic) specular reflections from a collection of finite number of mirrors (a specular reflection occurs when the angle of reflection is equal to the angle of incidence). Time reversibility of the classical optics implies  time reversibility of the process of Lambertian reflections. 

Proposition \ref{Distribution} allows us to rewrite \eqref{InducedMC0} as follows
\begin{equation}\label{InducedMC}
\left\{ 
\begin{array}{l}
\bs_{n+1}^\epsilon = (1-\bs_n^\epsilon)(1-\Lambda_{n+1}^\epsilon),\\
\bal_{n+1}^\epsilon = \bal_n^\epsilon + T_{n+1}^\epsilon \bs_n^\epsilon
+(1-\bs_n^\epsilon)\left(\Lambda_{n+1}^\epsilon S_{n+1}^\epsilon+(1-\Lambda_{n+1}^\epsilon)T_{n+1}^\epsilon\right).
\end{array}
\right.
\end{equation}

Since the evolution of $\{\bs^\epsilon_n, n\geq 0\}$ does not depend on  $\{\bal^\epsilon_n, n\geq 0\}$, it is a Markov chain in its own right. The chain $\{\bs^\epsilon_n, n\geq 0\}$ is  irreducible and aperiodic because the transition from 0 to 0 is possible. The unique invariant probability measure $\mu^\epsilon$ is given by 
\begin{equation*}
\mu^\epsilon(0) = \frac{1}{2-\epsilon},\qquad
 \mu^\epsilon(1)=\frac{1-\eps}{2-\epsilon}.
\end{equation*} 
From now on we will assume that $\bs^\eps_0$ (and, therefore, $\bs^\eps_n$ for all $n\geq 0$) is distributed according to $\mu^\eps$. It is easy to see that this assumption does not affect the validity of our main results.

 \begin{lemma}\label{Support}
 
 Set $b_\epsilon= \arctan\left(\sqrt{2\epsilon -\epsilon^2}\right)$. Then the support of the distribution of $T^\epsilon_n$ is $[-b_\epsilon, b_\epsilon]$, while the support of the distribution of $S^\eps_n$ is $[-2b_\epsilon, 2b_\epsilon]$.
 \end{lemma}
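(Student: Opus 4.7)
The plan is to read off both supports directly from the geometry of the reflected chord, using the explicit formulas \eqref{xreps}--\eqref{o15.4} and Lemma \ref{critRemain}. By rotation invariance I place the initial reflection point so that $\bal^\eps_0 = \pi/2$, which lets me work in a single fixed coordinate system with starting point on the vertical axis.

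For $T^\eps_n$, the ray starts at $(0, 1-\eps)$ on $\calCi$ and lands at $(x(\Theta), y(\Theta))$ on $\calCo$. First I would observe that $\Theta \mapsto \bal^\eps_1(\Theta)$ is continuous on $(-\pi/2, \pi/2)$ and, by convexity of $\calCi$, strictly monotone on each of $(-\pi/2, 0)$ and $(0, \pi/2)$: as $\Theta$ rotates, the chord through $(0,1-\eps)$ pivots monotonically, and convexity forces its second endpoint on $\calCo$ to traverse $\calCo$ monotonically in polar angle. At $\Theta = 0$ the ray is radial and $\bal^\eps_1 = \pi/2$. The extremes are attained in the limits $\Theta \to \pm\pi/2$, where the reflected ray becomes tangent to $\calCi$ at its base point, hence horizontal; intersecting this limiting horizontal line with $\calCo$ gives the two points $(\pm\sqrt{2\eps-\eps^2},\, 1-\eps)$. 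Converting to polar coordinates and subtracting $\pi/2$ produces the bounds $\pm b_\eps$, and the intermediate value theorem then fills in the entire closed interval.

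For $S^\eps_n$ the ray both starts and terminates on $\calCo$; by Lemma \ref{critRemain} this restricts $|\Theta|$ to $[\pi/2 - \gamma(\eps), \pi/2)$ with $\gamma(\eps)$ as in the proof of Lemma \ref{remain}. Parametrizing the chord through $(0,1)$ in the direction that makes angle $\Theta$ with the inward normal $(0,-1)$ and intersecting with $x^2 + y^2 = 1$ gives the far endpoint $(\sin 2\Theta,\, -\cos 2\Theta)$, whose polar angle is $2\Theta - \pi/2$. Hence $\bal^\eps_1 - \bal^\eps_0 = 2\Theta - \pi$, and as $\Theta$ sweeps the allowed set this increment covers $[-2\gamma(\eps), 2\gamma(\eps)]$ by continuity; matching the equivalent forms of $b_\eps$ identifies this with $[-2b_\eps, 2b_\eps]$.

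The main obstacle is bookkeeping rather than depth: sign and orientation conventions for $\Theta$, choosing the correct branch of $\arctan$ near $\pi/2$, and confirming that the limiting values at $\Theta = \pm \pi/2$ actually saturate the support (genuine monotonicity, not merely continuity, is required to rule out interior extrema). Everything beyond this is plane geometry combined with the already-derived chord formulas.
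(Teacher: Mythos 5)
Your proposal takes essentially the same route as the paper: extremize the angular increment over $\Theta$, identify the extremes at $\Theta=\pm\pi/2$ (for $T^\eps_n$, via \eqref{NPfromInner}) and at the critical angles of Lemma~\ref{critRemain} (for $S^\eps_n$), and use continuity to fill in the whole interval, with the $S$-bound coming out as twice the $T$-bound. You supply more detail than the paper does (explicit monotonicity plus IVT; the closed-form chord endpoint $(\sin 2\Theta,-\cos 2\Theta)$ for the outer-to-outer step), and the paper phrases the $S$-bound as a chord-length doubling rather than a direct angle computation, but the content is the same.

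One point worth flagging, which you inherit from the paper's own proof rather than introduce: the angular jump from $(0,1-\eps)$ to $(\pm\sqrt{2\eps-\eps^2},\,1-\eps)$ is
\[
\frac{\pi}{2}-\arcsin(1-\eps)=\arccos(1-\eps)=\arctan\!\left(\frac{\sqrt{2\eps-\eps^2}}{1-\eps}\right),
\]
which is exactly the $\gamma(\eps)$ of Lemma~\ref{remain}, \emph{not} $b_\eps=\arctan\!\big(\sqrt{2\eps-\eps^2}\big)$. Your closing line ``matching the equivalent forms of $b_\eps$'' treats $\gamma(\eps)$ and $b_\eps$ as equal; they agree only at leading order $\sqrt{2\eps}$ and differ at order $\eps^{3/2}$. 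The paper's statement has the same slip (``it is easy to check that ... $|\bal^\eps_1-\bal^\eps_0|=b_\eps$'' is not quite right), and it is harmless downstream because Lemmas~\ref{variance2}, \ref{EstimParam}, \ref{o22.21} and \ref{maxDep} only use the $O(\sqrt\eps)$ size, but you should not assert equality where only asymptotic equivalence holds. Also, a small typo: in your first paragraph the monotonicity follows from convexity of $\calCo$ (the circle being traversed by the second endpoint), not $\calCi$.
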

 \begin{proof}
Suppose that $\bs^\eps_0 = 1$ and $\bal^\eps_0 = \pi/2$, i.e., the light ray starts from the top point on the inner boundary. 
Recall the representation of the jumps and the notation introduced in \eqref{NPfromInner}. The absolute value of the angular component of the jump $|\bal^\eps_1-\bal^\eps_0|$ is  maximized when $\vert a(\Theta)\vert$ is minimized; in other words, if $\Theta=\pm \pi/2$. 
It is easy to check that when $\Theta=\pm \pi/2$ then $|\bal^\eps_1-\bal^\eps_0| = b_\eps$. This proves our claim about the support of the distribution of $T^\eps_n$. 

Next suppose that $\bs^\eps_0 = 0$ and $\bal^\eps_0 = \pi/2$, i.e., the light ray starts from the top point on the outer boundary. 
Assume that
$|\bal^\eps_1-\bal^\eps_0|$ corresponds to a jump from the outer boundary to outer boundary. Then this quantity 
is maximal when the light ray  is almost tangent to the inner boundary. 
Simple geometry shows that the length of such a light ray segment is bounded by twice the maximum length of a light ray starting from the inner boundary and ending at the outer boundary. By the first part of the proof, 
$|\bal^\eps_1-\bal^\eps_0|$ corresponding to a jump from the outer boundary to outer boundary
is bounded by $2 b_\eps$. This proves the second claim of the lemma.
 \end{proof}

 \begin{lemma}\label{variance2}
 
 We have
 \begin{equation}\label{d20.11}
 \lim_{\epsilon\rightarrow 0}\frac{\E  \big(\left( T^\epsilon_1 \right)^2\big)}{(1/2)\epsilon^2 \log (1/\epsilon)}=1.
 \end{equation}
 \end{lemma}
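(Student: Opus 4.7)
The plan is to work directly with the explicit representation \eqref{NPfromInner}, according to which (conditional on $\bs^\epsilon_0=1$) the random variable $T^\epsilon_1$ equals $\arctan(x(\Theta)/y(\Theta))$ with $\Theta$ distributed by the Lambertian density $\frac{1}{2}\cos\theta$. By Lemma \ref{Support}, $|x(\Theta)/y(\Theta)| \le \tan b_\epsilon = \sqrt{2\epsilon-\epsilon^2}$, so $\arctan(u)^2 = u^2 + O(u^4)$ with error uniformly $O(\epsilon^2)$. Integrated against $\frac{1}{2}\cos\theta$, this makes the difference between $\E[(T_1^\epsilon)^2]$ and $\int_{-\pi/2}^{\pi/2} (x/y)^2 \cdot \tfrac12 \cos\theta\,d\theta$ equal $O(\epsilon^2) = o(\epsilon^2\log(1/\epsilon))$, so it suffices to show that the latter integral is asymptotic to $\frac{1}{2}\epsilon^2 \log(1/\epsilon)$.

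Next I would rewrite the integrand in a cleaner form. Setting $D(\theta) := \sqrt{1-(1-\epsilon)^2\sin^2\theta}$, the factorization $(D-(1-\epsilon)\cos\theta)(D+(1-\epsilon)\cos\theta) = D^2 - (1-\epsilon)^2\cos^2\theta = 2\epsilon-\epsilon^2$ rationalizes \eqref{o15.4} to $x(\theta) = (2\epsilon-\epsilon^2)\sin\theta/[D(\theta)+(1-\epsilon)\cos\theta]$, while a direct computation from $y = a(\theta)x+1-\epsilon$ gives $y(\theta)=(1-\epsilon)\sin^2\theta+D(\theta)\cos\theta$. Since $(x/y)^2$ is even in $\theta$, the problem reduces to showing $\int_0^{\pi/2}(x(\theta)/y(\theta))^2 \cos\theta\,d\theta \sim \frac{1}{2}\epsilon^2\log(1/\epsilon)$.

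The third step localizes the logarithmic singularity. Substituting $\psi=\pi/2-\theta$ and fixing a small constant $\delta>0$, I would split the integral at $\psi=\delta$. On $[\delta,\pi/2]$ every denominator factor is bounded below by a positive constant uniformly in $\epsilon$, so $(x/y)^2=O(\epsilon^2)$ pointwise and this piece contributes only $O(\epsilon^2)$. On $[0,\delta]$, the Taylor expansions $\sin\psi=\psi+O(\psi^3)$, $\cos\psi=1+O(\psi^2)$ and $D=\sqrt{\psi^2+2\epsilon}(1+O(\psi^2+\epsilon))$ reduce the integrand, modulo a remainder that integrates to $O(\epsilon^2)$, to $4\epsilon^2\psi/(\sqrt{\psi^2+2\epsilon}+\psi)^2\,d\psi$.

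Finally I would rescale by $u=\psi/\sqrt{2\epsilon}$, so that the principal integral becomes $4\epsilon^2\int_0^{\delta/\sqrt{2\epsilon}} u\,du/(\sqrt{1+u^2}+u)^2$. Rationalizing by $\sqrt{1+u^2}-u$ yields the precise tail asymptotic $u/(\sqrt{1+u^2}+u)^2 \sim 1/(4u)$ as $u\to\infty$, so the rescaled integral equals $\tfrac{1}{8}\log(1/\epsilon)+O(1)$ and the total is $\tfrac{1}{2}\epsilon^2\log(1/\epsilon)+O(\epsilon^2)$, giving the ratio $1+o(1)$. The main obstacle is the bookkeeping of all the error terms produced by the nested Taylor expansions (for $\arctan$, for $\sin\psi$ and $\cos\psi$ near zero, and for the square root defining $D$); each remainder must be confirmed to stay at scale $O(\epsilon^2)$ rather than sneaking into the $\epsilon^2\log(1/\epsilon)$ scale, as otherwise the precise coefficient $\tfrac{1}{2}$ could be corrupted.
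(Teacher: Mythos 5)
Your proposal is correct, and it follows a genuinely different route from the paper's. Both proofs begin by trading $\E[(T^\epsilon_1)^2]=\E[\arctan^2(x(\Theta)/y(\Theta))]$ for $\E[x(\Theta)^2]$ (or your equivalent $\E[(x/y)^2]$), since the $\arctan$ and $y$ corrections contribute only $O(\epsilon^2)$. After that, the methods diverge. The paper works directly with the form $x(\theta)=\sin\theta\cos\theta\bigl(-1+\epsilon+\sqrt{1+(2-\epsilon)\epsilon\tan^2\theta}\bigr)$, splits the $\theta$-integral at $\theta_0=\pi/2-\arctan(2\epsilon^{1/2})$, Taylor-expands the square root on the inner region (where $\epsilon\tan^2\theta\le 1/4$) to isolate the leading term $\epsilon^2/\cos^4\theta$, and integrates $\epsilon^2\int_0^{\theta_0}\sin^2\theta/\cos\theta\,d\theta$ in closed form to extract $-\tfrac{\epsilon^2}{2}\log(1-\sin\theta_0)$. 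The logarithm thus appears from an explicit antiderivative evaluated at an $\epsilon$-dependent cutoff. Your proof instead rationalizes the formula for $x$ to $x=(2\epsilon-\epsilon^2)\sin\theta/[D+(1-\epsilon)\cos\theta]$ with $D=\sqrt{1-(1-\epsilon)^2\sin^2\theta}$, which eliminates the subtractive cancellation that forces the paper to Taylor-expand a square root; you then localize near $\theta=\pi/2$ with a fixed cutoff $\delta$, and rescale $\psi=\pi/2-\theta$ by $\sqrt{2\epsilon}$, so the logarithm emerges as the $\tfrac14\log U$ tail of the universal scale-free integral $\int_0^U u(\sqrt{1+u^2}-u)^2\,du$. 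Your version is more structural --- it makes the factor $1/2$ transparent as a product of the $\tfrac14$ tail constant and the $\tfrac12$ coming from $U\sim\delta/\sqrt{2\epsilon}$ --- and the rationalization keeps the error bookkeeping at the level of multiplicative $1+O(\epsilon+\psi)$ corrections, which, as you note, must be checked to integrate to $O(\epsilon^2)$ against the kernel $4\epsilon^2\psi/(\sqrt{2\epsilon+\psi^2}+\psi)^2$; this is straightforward since that kernel is uniformly $O(\epsilon^2)$ pointwise. The paper's proof, conversely, is entirely calculational and uses an $\epsilon$-dependent cutoff, which makes the source of the logarithm a bit more opaque but avoids introducing the auxiliary variables $D$, $\psi$, $u$.
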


 \begin{proof}

We will use  formula \eqref{o15.4}, i.e.,
\begin{align}\label{o15.5}
x(\theta)= \sin(\theta)\cos(\theta)
\left(-1+\eps + \sqrt{1+(2-\eps)\eps\tan^2(\theta)}\right).
\end{align}

 We will use the notation introduced in \eqref{o15.1}-\eqref{NPfromInner}. Hence we can and will identify $T^\epsilon_1$ with a function of $\Theta$, i.e.,
$T^\epsilon_1(\Theta)= \arctan\left(x(\Theta)/y(\Theta)\right)$.
Assume that $\bs^\eps_0 = 1$ and $\bal^\eps_0 = \pi/2$.
Then $1-\eps \leq y(\Theta) \leq 1$. Recall from Lemma \ref{Support} that 
$|T^\epsilon_1(\Theta)| \leq \arctan\left(\sqrt{2\epsilon -\epsilon^2}\right)$. These observations  imply that 
\begin{align}\label{o17.1}
  \lim_{\epsilon\rightarrow 0}\frac{\E  \left(\left( T^\epsilon_1 \right)^2\right)}{(1/2)\epsilon^2 \log (1/\epsilon)}
= \lim_{\epsilon\rightarrow 0}\frac{\E \left( \arctan^2\left(x(\Theta)/y(\Theta)\right)\right)}
{(1/2)\epsilon^2 \log (1/\epsilon)}
= \lim_{\epsilon\rightarrow 0}\frac{\E \left( x(\Theta)^2\right)}
{(1/2)\epsilon^2 \log (1/\epsilon)},
\end{align}
assuming that at least one of these limits exists.
It follows from \eqref{o15.5} that
\begin{align}
\E \left( x(\Theta)^2\right)
&= 2 \int_0^{\pi/2}
x(\theta)^2 \frac 1 2 \cos (\theta)  d\theta\nonumber\\
&=  \int_0^{\pi/2}
\sin^2(\theta)\cos^2(\theta)
\left(-1+\eps + \sqrt{1+(2-\eps)\eps\tan^2(\theta)}\right)^2 \cos (\theta)  d\theta\nonumber
\\
&=  \int_0^{\pi/2}
\sin^2(\theta)\cos^3(\theta)
\left(-1+\eps + \sqrt{1+(2-\eps)\eps\tan^2(\theta)}\right)^2  d\theta.
\label{o17.2}
\end{align}

Set 
\begin{align}\label{o17.7}
\theta_0&=\theta_0(\epsilon) = \frac{\pi}{2}
-\arctan\left(2\eps^{1/2}\right).
\end{align}
Then for $\eps\in(0,1)$ and $\theta\in (\theta_0,\frac{\pi}{2})$,
\begin{align*}
\sqrt{1+(2-\eps)\eps\tan^2(\theta)} \leq
\sqrt{2\cdot 1} + \sqrt{2(2-\eps)\eps\tan^2(\theta)}
\leq 2+ 2 \sqrt{\eps}\tan(\theta).
\end{align*}
This implies that, for $\eps\in(0,1)$ and $\theta\in (\theta_0,\frac{\pi}{2})$,
\begin{align*}
&\left(-1+\eps + \sqrt{1+(2-\eps)\eps\tan^2(\theta)}\right)^2
\leq (1-\eps)^2 +  \left( \sqrt{1+(2-\eps)\eps\tan^2(\theta)}\right)^2\\
&\qquad\leq 1 +  \left( 2+ 2 \sqrt{\eps}\tan(\theta)\right)^2
\leq 1 + \left(2 \cdot 2^2+ 2\cdot\left(2 \sqrt{\eps}\tan(\theta)\right)^2
\right)\\
&\qquad= 9 + 8 \eps \tan^2(\theta).
\end{align*}
Hence,
\begin{align}\label{o17.3}
& \int_{\theta_0}^{\pi/2}
\sin^2(\theta)\cos^3(\theta)
\left(-1+\eps + \sqrt{1+(2-\eps)\eps\tan^2(\theta)}\right)^2  d\theta\\
&\leq
\int_{\theta_0}^{\pi/2}
\sin^2(\theta)\cos^3(\theta)
\left(9 + 8 \eps \tan^2(\theta)\right)  d\theta\nonumber\\
& = \int_{\theta_0}^{\pi/2}
9\sin^2(\theta)\cos^3(\theta) d\theta
+
\int_{\theta_0}^{\pi/2}
8 \eps\sin^4(\theta)\cos(\theta) d\theta\nonumber\\
&\leq \int_{\theta_0}^{\pi/2}
9\cos^3(\theta) d\theta
+
\int_{\theta_0}^{\pi/2}
8 \eps\cos(\theta) d\theta\nonumber\\
&\leq  \int_{\theta_0}^{\pi/2}
9(\pi/2 - \theta)^3 d\theta
+
\int_{\theta_0}^{\pi/2}
8 \eps(\pi/2 - \theta) d\theta\nonumber\\
& = 9 \cdot \frac 1 4 (\pi/2 - \theta)^4
+ 8 \eps \frac 1 2 (\pi/2 - \theta)^2\nonumber\\
& = 9 \cdot \frac 1 4 \arctan^4\left(2\eps^{1/2}\right)
+ 8 \eps \frac 1 2 \arctan^2\left(2\eps^{1/2}\right)\nonumber\\
&= O(\eps^2).\nonumber
\end{align}

For $\eps\in(0,1)$ and $\theta\in [0,\theta_0 ]$, $\eps\tan^2(\theta) \leq 1/4$, so
\begin{align*}
\sqrt{1+(2-\eps)\eps\tan^2(\theta)} 
&= 1 + \frac 1 2 (2-\eps)\eps\tan^2(\theta)
+ O\left( \left((2-\eps)\eps\tan^2(\theta)\right)^2\right)\\
&= 1 + \eps\tan^2(\theta) -\frac 1 2 \eps^2\tan^2(\theta)
+ O\left(\eps^2\tan^4(\theta)\right).
\end{align*}
It follows that
\begin{align}\label{o17.4}
&\left(-1+\eps + \sqrt{1+(2-\eps)\eps\tan^2(\theta)}\right)^2\\
&= \left(-1+\eps +  1 + \eps\tan^2(\theta) -\frac 1 2 \eps^2\tan^2(\theta)
+ O\left(\eps^2\tan^4(\theta)\right)\right)^2\nonumber\\
&= \left(\eps \frac 1 {\cos^2(\theta)} -\frac 1 2 \eps^2\tan^2(\theta)
+ O\left(\eps^2\tan^4(\theta)\right)\right)^2\nonumber\\
& = \eps^2 \frac 1 {\cos^4(\theta)}
+ \frac 1 4  \eps^4\tan^4(\theta)
+ O\left(\eps^4\tan^8(\theta)\right)\nonumber\\
&\qquad
+ O\left(\eps^3 \frac {\tan^2(\theta)}{\cos^2(\theta)} \right)
+ O\left(\eps^3 \frac {\tan^4(\theta)}{\cos^2(\theta)} \right)
+ O\left(\eps^4 \tan^6(\theta) \right)\nonumber\\
& = \eps^2 \frac 1 {\cos^4(\theta)}
+ \frac 1 4  \eps^4\frac {\sin^4(\theta)}{\cos^4(\theta)}
+ O\left(\eps^4\frac {\sin^8(\theta)}{\cos^8(\theta)}\right)\nonumber\\
&\qquad
+ O\left(\eps^3 \frac {\sin^2(\theta)}{\cos^4(\theta)} \right)
+ O\left(\eps^3 \frac {\sin^4(\theta)}{\cos^6(\theta)} \right)
+ O\left(\eps^4 \frac {\sin^6(\theta)}{\cos^6(\theta)} \right)\nonumber\\
& = \eps^2 \frac 1 {\cos^4(\theta)}
+  O\left( \eps^4\frac {1}{\cos^4(\theta)}\right)
+ O\left(\eps^4\frac {1}{\cos^8(\theta)}\right)\nonumber\\
&\qquad
+ O\left(\eps^3 \frac {1}{\cos^4(\theta)} \right)
+ O\left(\eps^3 \frac {1}{\cos^6(\theta)} \right)
+ O\left(\eps^4 \frac {1}{\cos^6(\theta)} \right)\nonumber\\
& = \eps^2 \frac 1 {\cos^4(\theta)}
+  O\left( \eps^3\frac {1}{\cos^6(\theta)}\right)
+ O\left(\eps^4\frac {1}{\cos^8(\theta)}\right).\nonumber
\end{align}
We have
\begin{align}\label{o17.5}
& \int_0^{\theta_0}
\sin^2(\theta)\cos^3(\theta)
\frac {1}{\cos^6(\theta)} d\theta
\leq \int_0^{\theta_0}
\frac {1}{\cos^3(\theta)} d\theta 
\leq \int_0^{\theta_0}
\frac {1}{(1-2\theta/\pi)^3} d\theta\\
&\qquad = \frac { \pi (\pi - \theta_0) \theta_0}{(\pi - 2\theta_0)^2}
= O( \eps^{-1} ),\nonumber
\end{align}
and
\begin{align}\label{o17.6}
& \int_0^{\theta_0}
\sin^2(\theta)\cos^3(\theta)
\frac {1}{\cos^8(\theta)} d\theta
\leq \int_0^{\theta_0}
\frac {1}{\cos^5(\theta)} d\theta 
\leq \int_0^{\theta_0}
\frac {1}{(1-2\theta/\pi)^5} d\theta\\
&\qquad = 
\frac{1}{8} \pi  \left(\frac{\pi ^4}{(\pi -2 \theta_0)^4}-1\right)
= O( \eps^{-2} ).\nonumber
\end{align}
It follows from \eqref{o17.7}, \eqref{o17.5} and \eqref{o17.6} that
\begin{align} \nonumber 
& \int_0^{\theta_0}
\sin^2(\theta)\cos^3(\theta)
\left(-1+\eps + \sqrt{1+(2-\eps)\eps\tan^2(\theta)}\right)^2  d\theta\\
& = \int_0^{\theta_0}
\sin^2(\theta)\cos^3(\theta)
\left(\eps^2 \frac 1 {\cos^4(\theta)}
+  O\left( \eps^3\frac {1}{\cos^6(\theta)}\right)
+ O\left(\eps^4\frac {1}{\cos^8(\theta)}\right)\right)  d\theta \nonumber \\
& = \eps^2 \int_0^{\theta_0}
\frac{\sin^2(\theta)}{\cos(\theta)}d\theta +  O( \eps^{2}) \label{d17.6}\\
& = \frac{\eps^2}{2} \left(
 \log\left(1+\sin (\theta_0)\right)
-\log(1-\sin(\theta_0))
-\sin(\theta_0)\right) +  O( \eps^{2}) \nonumber \\
& = - \frac{\eps^2}{2} \log\left(1-\cos (\pi/2-\theta_0)\right) +  O( \eps^{2})  \label{d17.7}\\
& = - \frac{\eps^2}{2} \log\left(1-\cos \left(\arctan\left(2\eps^{1/2} \right)\right)\right) +  O( \eps^{2}) \nonumber \\
& = - \frac{\eps^2}{2} \log\left(2\eps\right) +  O( \eps^{2}) \nonumber 
\\
& =  \frac{\eps^2}{2} |\log \eps| +  O( \eps^{2}).  \nonumber 
\end{align}
This estimate and \eqref{o17.3} imply that
\begin{align*}
& \int_0^{\pi/2}
\sin^2(\theta)\cos^3(\theta)
\left(-1+\eps + \sqrt{1+(2-\eps)\eps\tan^2(\theta)}\right)^2  d\theta
=  \frac{\eps^2}{2} |\log \eps| +  O( \eps^{2}).
\end{align*}
The lemma follows from this, \eqref{o17.1} and \eqref{o17.2}.
\end{proof}

\begin{lemma}\label{EstimParam}
Recall the definition of $\Delta\calT^\eps_k$ stated in \eqref{o19.1}.
For every $k\geq 1$,
\begin{align}\label{o20.4}
\lim_{\epsilon\rightarrow 0}\frac 1 {\epsilon}
\mathbb{E}\left(\Delta\calT_k^\epsilon \mid \bs_{k-1}^\epsilon = 1\right)
&=
\frac{\pi}{2},\\
\lim_{\epsilon\rightarrow 0}\frac 1 {\epsilon}
\mathbb{E}\left(\Delta\calT_k^\epsilon \mid \bs_{k-1}^\epsilon = 0\right)
&=
\frac{\pi}{2}.\label{o20.6}
\end{align}
\end{lemma}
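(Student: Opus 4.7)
The plan is to establish \eqref{o20.4} by explicitly computing the chord length as a function of the reflection angle $\Theta$, and then deduce \eqref{o20.6} by decomposing according to $\bs^\eps_k$ and invoking Proposition \ref{Distribution} together with Lemma \ref{Support}.

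For \eqref{o20.4}, I first use rotation invariance to set $\bal^\eps_{k-1}=\pi/2$. The ray leaves $(0,1-\eps)$ along a line of slope $a(\Theta)=\cot\Theta$ and reaches $(x(\Theta),y(\Theta))$ on the outer circle, so the chord length equals $|x(\Theta)|\sqrt{1+a(\Theta)^2}=|x(\Theta)|/|\sin\Theta|$. Feeding in \eqref{o15.4} (and noting that $-1+\eps+\sqrt{1+(2-\eps)\eps\tan^2\theta}>0$) yields the closed form $\Delta\calT^\eps_k=\cos(\Theta)\bigl(-1+\eps+\sqrt{1+(2-\eps)\eps\tan^2\Theta}\bigr)$. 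Averaging against the Lambertian density $\tfrac{1}{2}\cos\theta$ on $(-\pi/2,\pi/2)$ and using even symmetry reduces the task to showing
\[\int_0^{\pi/2}\cos^2(\theta)\Bigl(-1+\eps+\sqrt{1+(2-\eps)\eps\tan^2\theta}\Bigr)d\theta=\frac{\pi}{2}\eps+o(\eps).\]

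To estimate this integral I follow the splitting strategy of Lemma \ref{variance2}, cutting at $\theta_0=\pi/2-\arctan(2\sqrt{\eps})$. On $[\theta_0,\pi/2]$ the elementary bound $\sqrt{1+u}\le 1+u/2$ makes the integrand at most $\eps\cos^2\theta+(2-\eps)\eps\sin^2\theta/2\le\eps$, so this range contributes $O(\eps^{3/2})$. On $[0,\theta_0]$ the argument $(2-\eps)\eps\tan^2\theta$ stays bounded by $1/2$, so Taylor expansion gives
\[-1+\eps+\sqrt{1+(2-\eps)\eps\tan^2\theta}=\frac{\eps}{\cos^2\theta}+O(\eps^2\tan^2\theta)+O(\eps^2\tan^4\theta);\]
multiplying by $\cos^2\theta$, the main term integrates to $\eps\theta_0=(\pi/2)\eps+O(\eps^{3/2})$, while the error terms are controlled by $\int_0^{\theta_0}\cos^{-2}\theta\,d\theta=\tan\theta_0=O(\eps^{-1/2})$, exactly the kind of inverse-power-of-$\cos$ estimates already used in \eqref{o17.5}. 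Dividing by $\eps$ and letting $\eps\to 0$ yields \eqref{o20.4}.

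For \eqref{o20.6} I condition on $\bs^\eps_k$ using Lemma \ref{remain}:
\[\E[\Delta\calT^\eps_k\mid\bs^\eps_{k-1}=0]=(1-\eps)\,\E[\Delta\calT^\eps_k\mid\bs^\eps_{k-1}=0,\bs^\eps_k=1]+\eps\,\E[\Delta\calT^\eps_k\mid\bs^\eps_{k-1}=0,\bs^\eps_k=0].\]
Any chord between the two circles has length $\sqrt{1+(1-\eps)^2-2(1-\eps)\cos(\bal^\eps_k-\bal^\eps_{k-1})}$, a function of the angular increment alone; Proposition \ref{Distribution} thus equates the first conditional expectation with $\E[\Delta\calT^\eps_k\mid\bs^\eps_{k-1}=1]=(\pi/2)\eps+O(\eps^{3/2})$. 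For the second term, a chord between two outer points at angular distance at most $2b_\eps$ (Lemma \ref{Support}) has length at most $2b_\eps=O(\sqrt{\eps})$, so that summand is $O(\eps^{3/2})$. Dividing by $\eps$ and passing to the limit proves \eqref{o20.6}. The principal obstacle is the integral in the first case: the bracket $-1+\eps+\sqrt{1+(2-\eps)\eps\tan^2\theta}$ carries a $1/\cos^2\theta$ singularity that only the prefactor $\cos^2\theta$ can absorb, and one must show that the grazing regime near $\theta=\pm\pi/2$ contributes only a lower-order correction; the cutoff at $\theta_0$ does exactly this, though here we need only the leading term rather than the $\log(1/\eps)$ refinement of Lemma \ref{variance2}.
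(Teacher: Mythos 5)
Your proof is correct, but it takes a genuinely different route from the paper's. For \eqref{o20.4} the paper writes $\Delta\calT^\eps_1=\sqrt{2(1-\eps)(1-\cos T^\eps_1)+\eps^2}$, Taylor-expands the cosine, and then uses a geometric rescaling argument (blowing up the annulus by $1/\eps$) to show that $T^\eps_1/\eps\to\tan\Theta$ pointwise with the uniform dominating bound $\tan\Theta$; dominated convergence then gives $G(\eps)\to\E(1/\cos\Theta)=\pi/2$ without any cutoff or error estimate. You instead derive the closed form $\Delta\calT^\eps_1=\cos\Theta\bigl(-1+\eps+\sqrt{1+(2-\eps)\eps\tan^2\Theta}\bigr)$ directly and run the same split-at-$\theta_0$ Taylor analysis that the paper uses for Lemma \ref{variance2}, one order lower. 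Both are valid; your computation yields the sharper quantitative estimate $(\pi/2)\eps+O(\eps^{3/2})$ as a byproduct, which makes the passage to \eqref{o20.6} cleaner since the $\eps\cdot O(\sqrt\eps)$ cross-boundary term is immediately seen to be $O(\eps^{3/2})$, whereas the paper's dominated-convergence route is shorter to state but proves only the limit, and so handles \eqref{o20.6} by re-invoking \eqref{o20.7} after separately killing the $\Lambda$-terms. Your use of Proposition \ref{Distribution} to identify the inner-to-outer and outer-to-inner conditional expectations, via the observation that the chord length $\sqrt{1+(1-\eps)^2-2(1-\eps)\cos(\bal^\eps_k-\bal^\eps_{k-1})}$ depends only on the angular increment, is a clean way to make explicit what the paper leaves implicit in the representation \eqref{InducedMC}.
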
 
\begin{proof}

It will suffice to prove the lemma for $k=1$.
By rotation invariance, we can and will assume that $\bal^\eps_0 = \pi/2$.
Then \eqref{o19.1}, \eqref{InducedMC} and Definition \eqref{o18.1} yield
\begin{align}\label{o21.1}
\mathbb{E}\left(\Delta\calT_1^\epsilon \mid \bs_0^\epsilon = 1\right)
&= \E
\sqrt{\sin^2(T^\eps_1) + (1-\eps - \cos(T^\eps_1))^2}\\
&= \E
\sqrt{2(1-\eps)(1- \cos(T^\eps_1))+\eps^2} .\nonumber
\end{align}
Let
\begin{align*}%\label{o21.2}
G(\epsilon)=\frac{1}{\epsilon}\mathbb{E}
\left(\sqrt{2(1-\eps)(1- \cos(T^\eps_1))+\eps^2}
\right).
\end{align*}
Since $\vert T^\epsilon_1\vert \leq \arctan\big(\sqrt{2\epsilon+\epsilon^2}\big)$ by Lemma \ref{Support}, the Taylor expansion for the cosine function  at $0$ yields
\begin{equation}\label{o21.2}
1-\cos(T^\epsilon_1)=\frac{1}{2}(1+O(\epsilon))\big(T^\epsilon_1\big)^2.
\end{equation}  
Therefore, using notation from \eqref{NPfromInner},
\begin{align}\label{d25.1}
G(\epsilon)&=\mathbb{E}\left(\sqrt{1+(1+\epsilon)(1+O(\epsilon))(T^\eps_1/\eps)^2}\right)\\
&=\mathbb{E}\left(\sqrt{1+(1+\epsilon)(1+O(\epsilon))
\left(\frac 1 \eps \arctan(x(\Theta)/y(\Theta))\right)^2}
\mid \bs^\eps_0 = 1\right).\notag
\end{align}

We will estimate $\frac 1 \eps \arctan(x(\Theta)/y(\Theta))$.
The following geometric interpretation of the quantity $\frac 1 \eps \arctan(x(\Theta)/y(\Theta))$ follows from  \eqref{NPfromInner}, rescaling (enlarging) the annulus $\calD(\eps,1)$ by the factor of $1/\eps$, and then shifting it down by $1/\eps$ so that its outer boundary passes through the origin. 
Consider the half-line $L$ starting at $(0,-1)$ at an angle $\theta\in[0,\pi/2)$ with the vertical line. Let $A_1(\eps)$ be the intersection point of   $L$ with the circle $\calC((0, - 1/\eps), 1/\eps)$ (i.e., the outer boundary of the transformed domain) and let $A_2$ be the intersection point of   $L$ with the horizontal axis. Then 
$\frac 1 \eps \arctan(x(\Theta)/y(\Theta))$ is the angle between the vertical line and the line passing through points $A_1(\eps)$ and $(0, - 1/\eps)$. 
Let $\alpha(\eps)$ be the angle  between the vertical line and the 
line passing through points $A_2$ and $(0, - 1/\eps)$. 
For every fixed $\theta\in[0,\pi/2)$, $A_1(\eps) \to A_2$ as $\eps\to 0$. 
This implies that 
\begin{align}\label{o12.1}
\lim_{\eps \downarrow 0} \frac{\arctan(x(\Theta)/y(\Theta))}{\epsilon}
= \lim_{\eps \downarrow 0} \frac{\alpha(\eps)}{\epsilon}
= \dist(A_2, (0,0)) = \tan \theta.
\end{align}
Moreover, we have $\arctan(x(\Theta)/y(\Theta)) \leq \alpha(\eps)$ and $\alpha(\eps) \leq \tan \alpha(\eps) = \dist(A_2, (0,0)) / (1/\eps)$ so for all $\eps >0$ and $\theta\in[0,\pi/2)$,
\begin{align}\label{o12.2}
 \frac{\arctan(x(\Theta)/y(\Theta))}{\epsilon}
\leq \frac{\alpha(\eps)}{\epsilon}
\leq \dist(A_2, (0,0)) = \tan \theta.
\end{align}
A similar analysis applies to $\theta\in (-\pi/2, 0]$.
By the dominated convergence theorem and \eqref{o12.1}-\eqref{o12.2},
\begin{align}\label{o20.7}
\lim_{\epsilon\rightarrow 0}\frac 1 {\epsilon}
&\mathbb{E}\left(\Delta\calT_k^\epsilon \mid \bs_{k-1}^\epsilon = 1\right)
=
\lim_{\epsilon\rightarrow 0}
\frac{1}{\epsilon} \E\left( 
\sqrt{2(1-\eps)(1- \cos(T^\eps_1))+\eps^2}\right)\nonumber\\
&=\lim_{\epsilon\rightarrow 0}G(\epsilon)=\mathbb{E}\left(\sqrt{1+\tan^2(\Theta)}\right)
= \mathbb{E}\left(\frac{1}{\cos(\Theta)}\right)=\frac{\pi}{2}.
\end{align}
This proves  \eqref{o20.4}.

By \eqref{o19.1}, \eqref{InducedMC} and Definition \ref{o18.1},
\begin{align}\label{o20.9}
&\mathbb{E}\left(\Delta\calT_1^\epsilon \mid \bs_0^\epsilon = 0\right)\\
&= \E \left(
\Lambda_{1}^\epsilon
\sqrt{2(1-\cos(S^\eps_1))} +(1-\Lambda_{1}^\epsilon)
\sqrt{2(1-\eps)(1- \cos(T^\eps_1))+\eps^2}\right).\nonumber
\end{align}

By Lemma \ref{remain} and Lemma \ref{Support}, 
\begin{align}\label{o20.1}
&\lim_{\epsilon\rightarrow 0}\frac{1}{\epsilon}
\E \left|\Lambda_{1}^\epsilon
\sqrt{2(1-\cos(S^\eps_1))}\right|
= \lim_{\epsilon\rightarrow 0}\frac{1}{\epsilon}
\eps \E \left|\sqrt{2(1-\cos(S^\eps_1))}\right|
= 0,
\end{align}
and
\begin{align}\label{o20.2}
&\lim_{\epsilon\rightarrow 0}\frac{1}{\epsilon}
\E \left|-\Lambda_{1}^\epsilon
\sqrt{2(1-\eps)(1- \cos(T^\eps_1))+\eps^2}\right|\\
&= \lim_{\epsilon\rightarrow 0}\frac{1}{\epsilon}
\eps \E \left(\sqrt{2(1-\eps)(1- \cos(T^\eps_1))+\eps^2}\right)
= 0.\nonumber
\end{align}
By  \eqref{o20.7}, 
\begin{align}\label{o20.8}
\lim_{\epsilon\rightarrow 0}
\frac{1}{\epsilon} \E\left( 
\sqrt{2(1-\eps)(1- \cos(T^\eps_1))+\eps^2}\right)=\frac{\pi}{2}.
\end{align}
The combination of \eqref{o20.9}, \eqref{o20.1}, \eqref{o20.2} 
and \eqref{o20.8} implies \eqref{o20.6}.
\end{proof}

\begin{lemma}\label{EstimNumberBounce} 
For $t\geq 0$, 
\begin{align}\label{n9.1}
\mathbb{E}\left(N^\epsilon (t)\right)\leq 2(t+2\eps)/\epsilon.
\end{align}
\end{lemma}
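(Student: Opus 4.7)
The key geometric observation is that any reflection segment starting at or ending on the inner boundary must cross the annulus, and hence has length at least $\eps$; since the ray travels at unit speed,
$$\Delta\calT^\eps_k \;\geq\; \eps\bigl(1 - \bone_{\{\bs^\eps_{k-1} = \bs^\eps_k = 0\}}\bigr)$$
for every $k\geq 1$. Summing over $k = 1,\ldots,n$ and rearranging gives the pathwise inequality $n \leq \calT^\eps_n/\eps + M^\eps_n$, where $M^\eps_n := \sum_{k=1}^n \bone_{\{\bs^\eps_{k-1} = \bs^\eps_k = 0\}}$ counts the outer-to-outer jumps among the first $n$ reflections. Taking $n = N^\eps(t)$ and using $\calT^\eps_{N^\eps(t)} \leq t$ yields the pathwise bound $N^\eps(t) \leq t/\eps + M^\eps_{N^\eps(t)}$, which reduces the problem to estimating $\mathbb{E} M^\eps_{N^\eps(t)}$.

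For this second ingredient, I would exploit the representation \eqref{InducedMC}: the identity $\bs^\eps_k = (1 - \bs^\eps_{k-1})(1 - \Lambda^\eps_k)$ implies $\bone_{\{\bs^\eps_{k-1} = \bs^\eps_k = 0\}} = \bone_{\{\bs^\eps_{k-1} = 0\}} \Lambda^\eps_k$, and the $\mathrm{Bernoulli}(\eps)$ variable $\Lambda^\eps_k$ is independent of the natural filtration $\calF_{k-1}$. Consequently the compensated process
$$M^\eps_n - \eps \sum_{k=1}^n \bone_{\{\bs^\eps_{k-1} = 0\}}$$
is a martingale starting at $0$. Since $\nu := N^\eps(t) + 1$ is a stopping time and the summands are non-negative, an optional stopping argument --- first applied at the bounded time $\nu \wedge m$ and then passed to the limit $m \to \infty$ by monotone convergence --- yields $\mathbb{E} M^\eps_\nu \leq \eps\, \mathbb{E}\nu = \eps(\mathbb{E} N^\eps(t) + 1)$.

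Combining the two ingredients through $M^\eps_{N^\eps(t)} \leq M^\eps_\nu$, one gets
$$\mathbb{E} N^\eps(t) \;\leq\; \frac{t}{\eps} + \eps\bigl(\mathbb{E} N^\eps(t) + 1\bigr),$$
so that $\mathbb{E} N^\eps(t) \leq (t/\eps + \eps)/(1 - \eps)$. In the regime of small $\eps$ (certainly $\eps \leq 1/2$, which applies here) this is at most $2t/\eps + 2\eps \leq 2(t + 2\eps)/\eps$, as claimed. The main technical subtlety is that one has no a priori bound on $\mathbb{E}\nu$ with which to justify optional stopping directly; the truncation at $\nu \wedge m$ resolves this, since reproducing the recursion with $\mathbb{E}(\nu \wedge m)$ in place of $\mathbb{E}\nu$ yields a bound uniform in $m$, after which monotone convergence completes the argument.
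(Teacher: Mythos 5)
Your proof is correct, and it takes a genuinely different route from the paper's. The paper also begins with the observation that any crossing (segment not between two consecutive outer-boundary reflections) has length at least $\eps$, but then it counts \emph{cycles}: after $\lceil t/(2\eps)\rceil$ inner-to-outer crossings the clock must exceed $t$, and the total number of reflections by then is $U(n) = n + \sum_{k=1}^n X^\eps_k$ with the $X^\eps_k$ i.i.d.\ geometric (parameter $1-\eps$), whose expectation is computed explicitly. You instead derive the pathwise inequality $N^\eps(t) \leq t/\eps + M^\eps_{N^\eps(t)}$, recognize $M^\eps_n - \eps\sum_{k=1}^n \bone_{\{\bs^\eps_{k-1}=0\}}$ as a martingale via the identity $\bone_{\{\bs^\eps_{k-1}=\bs^\eps_k=0\}} = \bone_{\{\bs^\eps_{k-1}=0\}}\Lambda^\eps_k$, apply optional stopping at $\nu\wedge m$ (noting correctly that $\nu = N^\eps(t)+1$, not $N^\eps(t)$, is the stopping time), and close a recursion in $\E(N^\eps(t)\wedge m)$ before passing to the limit by monotone convergence. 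The paper's renewal-theoretic computation is more concrete and gives the constant directly; your martingale compensator argument is more abstract, avoids identifying the geometric law explicitly, and is correspondingly more robust to perturbations of the model (indeed the analogous Lemma \ref{NbrJumps} in the perturbed-annulus setting would also yield to your method after replacing $\eps$ by the upper bound from Lemma \ref{probReste}). You are also right to flag the a priori finiteness issue and to resolve it by truncation; that is the one place where a careless writer would slip.
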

\begin{proof}
The light ray travels at speed 1 so it takes at least $\eps$ units of time 
between any two consecutive reflections that don't take place on the same piece of the boundary. Thus $n$ crossings from the inner to the outer boundary and $n$ crossings from the outer to the inner boundary must take at least $2n\epsilon$ units of time. 
Let $U(n)$ be the total number of reflections (including consecutive reflections from the outer boundary) that have occurred by the time when 
$n$ crossings from the inner to the outer boundary and $n$ crossings from the outer to the inner boundary have happened. Then $N^\eps(t) \leq U(\lceil t/(2\epsilon)\rceil)$.
We can represent $U(n)$ as
\begin{equation*}
  U(n)=n +\sum_{k=1}^n X^\epsilon_{k},
\end{equation*}
where $X^\eps_k$ are i.i.d. random variables with the geometric distribution (taking values $1,2, \dots$) with parameter $1-\eps$ (see Lemma \ref{remain}).
Therefore, for $\eps < 1/2$,
\begin{align*}
&\mathbb{E}\left(N^\epsilon(t)\right)
\leq \mathbb{E}\left(U\left(\lceil t/(2\epsilon)\rceil
\right)\right)
=  \lceil t/(2\epsilon)\rceil +  \lceil t/(2\epsilon)\rceil \frac 1 {1-\eps}
= \lceil t/(2\epsilon)\rceil \frac {2-\eps} {1-\eps}
\\
&\leq 
\left( \frac t{2\eps} +1\right) \frac {2-\eps} {1-\eps}
\leq 2(t+2\eps)/\epsilon.
\end{align*}
\end{proof}

\begin{lemma}\label{o22.21}
Processes $\left\{
\frac{\pi}{2} \eps^2\log(1/\eps)
N^\epsilon \left(\frac{t}{(1/2)\eps\log(1/\eps)}\right)-t, t\geq 0\right\}$ converge in probability   toward 0 in the uniform topology on compact sets when $\eps\rightarrow 0$.
\end{lemma}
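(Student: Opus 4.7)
The plan is to read the lemma as a functional law of large numbers for the renewal-type counting process $N^\eps$, whose inter-arrival times $\Delta\calT_k^\eps$ are given in \eqref{o19.1}, and then to upgrade pointwise-in-$t$ convergence to locally uniform convergence by a monotonicity argument.

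I would first prove a weak LLN for the partial sums $\calT_n^\eps$ via the martingale decomposition
\begin{equation*}
\calT_n^\eps = A_n^\eps + M_n^\eps, \qquad A_n^\eps := \sum_{k=1}^n \E\!\left[\Delta\calT_k^\eps \,\middle|\, \bs_{k-1}^\eps\right], \quad M_n^\eps := \calT_n^\eps - A_n^\eps,
\end{equation*}
taken with respect to the natural filtration generated by the driving variables of Definition \ref{o18.1}. Lemma \ref{EstimParam} gives $\E[\Delta\calT_k^\eps \mid \bs_{k-1}^\eps] = (\pi\eps/2)(1+o(1))$ uniformly in the state $\bs_{k-1}^\eps \in \{0,1\}$, so $A_n^\eps = n(\pi\eps/2)(1+o(1))$ deterministically. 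For the martingale part, I would use the deterministic bound $\Delta\calT_k^\eps \leq C\sqrt{\eps}$, which follows from Lemma \ref{Support} and the elementary observation that every chord in $\calD(\eps,1)$ has length at most twice the longest chord tangent to the inner circle, of order $\sqrt{\eps}$. This yields $\var(\Delta\calT_k^\eps \mid \bs_{k-1}^\eps) \leq C^2\eps$, and orthogonality of martingale increments then gives $\E[(M_n^\eps)^2] \leq C^2 n \eps$.

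Next, I would invert via the renewal duality $\calT_{N^\eps(s)}^\eps \leq s < \calT_{N^\eps(s)+1}^\eps$, setting $s = s(\eps,t) := t/((1/2)\eps\log(1/\eps))$. Chebyshev applied to the $L^2$ bound on $M_n^\eps$ shows that for any $n$ of order $s/\eps$, the error $\sqrt{n\eps}$ is negligible compared with the deterministic leading term $n\eps$, since their ratio $1/\sqrt{n\eps}$ tends to $0$. Consequently $\calT_n^\eps/(n\pi\eps/2) \to 1$ in probability in this regime; inserting $n = N^\eps(s)$ into the sandwich and combining with the a priori bound of Lemma \ref{EstimNumberBounce} (to control $N^\eps(s)$ from above) gives the pointwise-in-$t$ convergence of $(\pi/2)\eps^2\log(1/\eps)\,N^\eps(s(\eps,t))$ to the required limit.

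Finally, both $t \mapsto (\pi/2)\eps^2\log(1/\eps)\,N^\eps(s(\eps,t))$ and its limit are non-decreasing in $t$, and the limit is continuous, so pointwise convergence in probability on a countable dense set of $t$ lifts to uniform convergence in probability on compact subsets of $[0,\infty)$ by bracketing each compact interval with a fine $\delta$-grid and using monotonicity.

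The principal obstacle is the lack of independence of the $\Delta\calT_k^\eps$ coming from the Markov dependence through $\{\bs_n^\eps\}$; the martingale decomposition neutralises this at the $L^2$ level. A secondary subtlety is that the rare outer-to-outer jumps (probability $\eps$ by Lemma \ref{remain}) can be as large as $O(\sqrt{\eps})$, contributing $O(\eps)$ to $\E[(\Delta\calT_k^\eps)^2]$ individually; this is already absorbed into the uniform bound $\Delta\calT_k^\eps \leq C\sqrt{\eps}$ used above, so it does not disturb the argument.
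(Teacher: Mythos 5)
Your proposal is correct and follows the same basic architecture as the paper's proof: a martingale decomposition of $\calT_n^\eps$, a second-moment bound on the martingale part, a renewal sandwich to invert from $\calT_n^\eps$ to $N^\eps$, and monotonicity of $t\mapsto N^\eps(t)$ to upgrade pointwise convergence in probability to locally uniform convergence. The two genuine differences are worth noting. First, the variance control: you bound $\var(\Delta\calT_k^\eps\mid\calF_{k-1}^\eps)$ by the crude deterministic estimate $(\Delta\calT_k^\eps)^2 \leq C^2\eps$ from the support bound of Lemma~\ref{Support}, whereas the paper proves the much sharper $\E[(\Delta\calT_1^\eps)^2\mid\bs_0^\eps]=O(\eps^2\log(1/\eps))$ (re-using Lemma~\ref{variance2}). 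Your cruder bound is still sufficient, since the Chebyshev ratio $C^2/(\lambda^2 n\eps)\to 0$ in the relevant regime $n\asymp t/(\eps^2\log(1/\eps))$; this is a small but genuine simplification. Second, the inversion: the paper applies the optional stopping theorem directly at the random time $W(\eps,t)=N^\eps(\cdot)$ (using Lemma~\ref{EstimNumberBounce} only to justify $\E W(\eps,t)<\infty$ so that optional stopping is legal), whereas you phrase the LLN for $\calT_n^\eps$ at deterministic $n$ and then say you ``insert $n=N^\eps(s)$ into the sandwich.'' As stated this plug-in of a random index into a pointwise LLN is not rigorous; the standard repair is to bracket $N^\eps(s)$ between deterministic $n_\pm = (1\mp\delta)\cdot 2s/(\pi\eps)$ and observe that $\P(N^\eps(s)<n_-)=\P(\calT^\eps_{n_-}>s)\to 0$ and $\P(N^\eps(s)>n_+)\to 0$, both by the LLN itself. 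With that detail filled in, your argument goes through and actually does not need the a priori bound of Lemma~\ref{EstimNumberBounce} at all (the two-sided control comes for free from the bracketing). Your parenthetical remark about the rare outer-to-outer jumps is correct but redundant, since they are already absorbed into the uniform bound $\Delta\calT_k^\eps\leq C\sqrt{\eps}$.
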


\begin{proof}

Computations similar to those in \eqref{o21.1} and \eqref{o21.2} yield
\begin{align*}%\label{o21.1}
\mathbb{E}\left((\Delta\calT_1^\epsilon)^2 \mid \bs_0^\epsilon = 1\right)
&= \E\left(\sin^2(T^\eps_1) + (1-\eps - \cos(T^\eps_1))^2\right)\\
&= \E \left(2(1-\eps)(1- \cos(T^\eps_1))+\eps^2\right)\\
&= \E \left((1+\epsilon)(1+O(\epsilon))(T^\eps_1)^2+\eps^2\right) .
\end{align*}
This and Lemma \ref{variance2} imply that for small $\eps>0$,
\begin{align}\label{o21.6}
\mathbb{E}\left((\Delta\calT_1^\epsilon)^2 \mid \bs_0^\epsilon = 1\right)
< \epsilon^2\log (1/\epsilon)  .
\end{align}

By \eqref{o20.9}, 
\begin{align}\label{o21.3}
\mathbb{E}&\left((\Delta\calT_1^\epsilon)^2 \mid \bs_0^\epsilon = 0\right)
 \\
&= \E \left( \left(
\Lambda_{1}^\epsilon
\sqrt{2(1-\cos(S^\eps_1))} +(1-\Lambda_{1}^\epsilon)
\sqrt{2(1-\eps)(1- \cos(R^\eps_1))+\eps^2}\right)^2\right)
\nonumber \\
& \leq 
2 \E \left( \left(
\Lambda_{1}^\epsilon \right)^2
\left(2(1-\cos(S^\eps_1))\right) \right)\nonumber 
+2 \E \left((1-\Lambda_{1}^\epsilon)^2
\left(2(1-\eps)(1- \cos(R^\eps_1))+\eps^2\right)\right).
\end{align}

By Lemma \ref{variance2} and Proposition \ref{Distribution}, for small $\eps>0$,
\begin{align}\label{o21.4}
2 \E \left((1-\Lambda_{1}^\epsilon)^2
\left(2(1-\eps)(1- \cos(R^\eps_1))+\eps^2\right)\right)
<  2\epsilon^2\log (1/\epsilon)  .
\end{align}
It follows from the definition of $\Lambda^\eps_1$ and Lemma \ref{Support} that, for small $\eps>0$,
\begin{align*}%\label{o20.9}
2 \E \left( \left(
\Lambda_{1}^\epsilon \right)^2
\left(2(1-\cos(S^\eps_1))\right) \right)
\leq 5\eps^2.
\end{align*}
This, \eqref{o21.3} and \eqref{o21.4} imply that, for small $\eps>0$,
\begin{align}\label{o21.5}
\mathbb{E}\left((\Delta\calT_1^\epsilon)^2 \mid \bs_0^\epsilon = 0\right)
\leq 2\epsilon^2\log (1/\epsilon)  .
\end{align}

Recall definition \eqref{o22.8} and
set 
\begin{align*}
M_n^\epsilon = 
\frac 12 \eps\log(1/\eps)
\left(\calT^\eps_n  -\sum_{k=1}^n\mathbb{E}\big(\Delta\calT_k^\epsilon \mid \mathcal{F}_{k-1}^\epsilon\big)\right).
\end{align*}
Then $(M_n^\epsilon)_{n\geq 0}$ is a martingale starting at $0$ and its quadratic variation is 
$$\langle M^\epsilon\rangle_n =
\frac14 \eps^2\log^2(1/\eps) \sum_{k=1}^n\var (\Delta\calT_k^\epsilon\mid \mathcal{F}_{k-1}^\epsilon).$$
From \eqref{o21.6} and \eqref{o21.5}, we obtain for small $\eps>0$,
\begin{align}\label{o22.1}
\langle M^\epsilon\rangle_n \leq  \frac14 \eps^2\log^2(1/\eps) \sum_{k=1}^n\mathbb{E}(\big(\Delta\calT_k^\epsilon\big)^2\mid \mathcal{F}_{k-1}^\epsilon\big)
\leq \frac 12 n \epsilon^4\log^3 (1/\epsilon).
\end{align} 

In this proof, we will use the notation $W(\eps,t) = N^\epsilon\left(\frac{t}{(1/2)\eps\log(1/\eps))}\right)$.
By Lemma \ref{EstimNumberBounce}, $W(\eps,t)$ is a stopping time with a finite expectation so by the optional stopping theorem,  \eqref{n9.1} and \eqref{o22.1},
for small $\eps>0$, 
\begin{align}\label{o22.1b}
\mathbb{E}\left(\left(M_{W(\eps,t)}^\epsilon\right)^2\right)
&=
\E\langle M^\epsilon\rangle_{W(\eps,t)} 
\leq \frac 12  \epsilon^4\log^3 (1/\epsilon)
\E W(\eps,t)\nonumber\\
& \leq 
 \frac 12  \epsilon^4\log^3 (1/\epsilon)
 \frac 2\eps \left(\frac{t}{(1/2)\eps\log(1/\eps)}+2\epsilon\right) \nonumber\\
& =  \epsilon^3\log^3 (1/\epsilon) 
 \left(\frac{t}{(1/2)\eps\log(1/\eps)}+2\epsilon\right).
\end{align}
For a fixed $t$, the right hand side goes to 0 as $\eps\to 0$.

The definition of $N^\eps(t)$ implies that
\begin{equation}\label{o22.2}
\calT^\eps_{W(\eps,t)} 
\leq \frac{t}{(1/2)\eps\log(1/\eps))}\leq \calT^\eps_{W(\eps,t)+1} 
=\calT^\eps_{W(\eps,t)} 
+\Delta\calT_{W(\eps,t)+1}^\epsilon.
\end{equation}
It follows easily from \eqref{o19.1} and Lemma \ref{Support}
that  $\lim_{\epsilon\rightarrow 0}\sup_{k\geq 1} \Delta\calT_k^\epsilon =0$ almost-surely. Hence, a.s.,
\begin{align}\label{o22.4}
\lim_{\eps\to 0} \eps\log(1/\eps)\left|\calT^\eps_{W(\eps,t)} 
- \frac{t}{(1/2)\eps\log(1/\eps))} \right| = 0.
\end{align}

It follows from the definition of $M^\eps_n$ and \eqref{o22.1b} that
\begin{align}\label{o22.3}
\lim_{\eps\to 0}\eps\log(1/\eps)
\left\vert \calT^\eps_{W(\eps,t)}  
 - \sum_{k=1}^{W(\eps,t)}
\mathbb{E}\left(\Delta\calT_k^\epsilon \mid \mathcal{F}_{k-1}^\epsilon\right)\right\vert =0,
\end{align}
in probability.

Lemma \ref{EstimParam} implies that, a.s.,
\begin{align}\label{n9.2}
\lim_{\eps\to0} 
\left( \frac 1 \eps  \sup_{k\geq 0}
\left|\mathbb{E}\left(\Delta\calT_k^\epsilon \mid \mathcal{F}_{k-1}^\epsilon\right) -\mathbb{E}(\Delta\calT_k^\eps)
\right|\right) = 0.
\end{align}

By  Lemmas \ref{EstimParam} and  \ref{EstimNumberBounce}, and \eqref{n9.2},
for $t>0$,
\begin{align*}
0&\leq \lim_{\eps\to 0}\eps \log(1/\eps)
\E\left\vert\frac{\pi}{2}\eps W(\eps,t)
- \sum_{k=1}^{W(\eps,t)}
\mathbb{E}\left(\Delta\calT_k^\epsilon \mid \mathcal{F}_{k-1}^\epsilon\right)
  \right\vert \\
&\leq \lim_{\eps\to 0}\eps\log(1/\eps)\E\left\vert
\frac{\pi}{2}\eps W(\eps,t) - \sum_{k=1}^{W(\eps,t)}
\mathbb{E}(\Delta\calT_k^\eps)
\right\vert\\
&\qquad + \lim_{\eps\to 0}\eps\log(1/\eps)\E\left\vert \sum_{k=1}^{W(\eps,t)}
\left(
\mathbb{E}\left(\Delta\calT_k^\epsilon \mid \mathcal{F}_{k-1}^\epsilon\right) -\mathbb{E}(\Delta\calT_k^\eps)\right) \right\vert\\
&\leq \lim_{\eps\to 0}\eps\log(1/\eps)\E(W(\eps,t))
\left|\frac{\pi}{2}\eps - \mathbb{E}(\Delta\calT_1^\eps)
 \right\vert\\
&\qquad + \lim_{\eps\to 0}\eps\log(1/\eps)\E(W(\eps,t))
\sup_{k\geq 0}\left|
\mathbb{E}\left(\Delta\calT_k^\epsilon \mid \mathcal{F}_{k-1}^\epsilon\right) -\mathbb{E}(\Delta\calT_k^\eps) \right\vert\\
&\leq \lim_{\eps\to 0}\eps\log(1/\eps) \frac 2 \eps
 \Big(\frac{t}{(1/2)\eps\log(1/\eps))}+2\eps\Big) \eps  
\left|\frac{\pi}{2} - \frac 1\eps \mathbb{E}(\Delta\calT_k^\eps)
 \right\vert\\
&\qquad + \lim_{\eps\to 0}\eps\log(1/\eps) \frac 2 \eps
 \Big(\frac{t}{(1/2)\eps\log(1/\eps))}+2\eps\Big) 
\sup_{k\geq 0}\left|
\mathbb{E}\left(\Delta\calT_k^\epsilon \mid \mathcal{F}_{k-1}^\epsilon\right) -\mathbb{E}(\Delta\calT_k^\eps) \right\vert\\
&\leq \lim_{\eps\to 0}4t
\left|\frac{\pi}{2} - \frac 1\eps \mathbb{E}(\Delta\calT_k^\eps)
 \right\vert + \lim_{\eps\to 0} 4t
\frac 1 \eps 
\sup_{k\geq 0}\left|
\mathbb{E}\left(\Delta\calT_k^\epsilon \mid \mathcal{F}_{k-1}^\epsilon\right) -\mathbb{E}(\Delta\calT_k^\eps) \right\vert\\
&=0.
\end{align*}
This, \eqref{o22.4} and \eqref{o22.3} imply that for any fixed $t\geq 0$,
\begin{align*}
\lim_{\eps\to 0}
\left\vert \frac \pi 2 \eps^2\log(1/\eps) N^\epsilon\left(\frac{t}{(1/2)\eps\log(1/\eps))}\right)  -t \right\vert =
\lim_{\eps\to 0}
\left\vert \frac \pi 2 \eps^2\log(1/\eps) W(\eps,t)   -t \right\vert 
=0,
\end{align*}
in probability. The stronger statement given in the lemma follows from this and the fact that the process $t\to N^\epsilon(t)$ is non-decreasing.
\end{proof}

\begin{proof}[Proof of Theorem \ref{MainTHM}]

First we are going to apply \cite[Theorem 1.4, Chapter 7]{EthierKurtz}
to a time change of  $\bal^\eps_k$. We extend the time parameter for this process from integers to reals by letting $\bal^\eps_t := \bal^\eps_{\lfloor t \rfloor}$ for $t\geq 0$. Next we rescale, i.e., we let
\begin{align*}
\sigma^2_\eps & =  \frac 12 \eps^2 \log(1/\eps),\\%\frac 1 2
\wt \bal^\eps_t &=  \bal^\eps_{t/\sigma^2_\eps }, \qquad
\text{  for  } t\geq0. 
\end{align*}
We will prove that  processes $\{\wt \bal^\eps_t, t\geq 0\}$ converge weakly to Brownian motion as $\eps \to 0$.

It follows easily from the symmetry of jumps of $\bal^\eps_k$ and from Lemma \ref{Support} that
the process $\{\wt \bal^\eps_t, t\geq 0\}$ is a martingale.

We will use assumption (a) of \cite[Theorem 1.4, Chapter 7]{EthierKurtz}. According to Definition \ref{o18.1}, Proposition \ref{Distribution} and Lemma \ref{Support}, 
$|\bal^\eps_{n+1} - \bal^\eps_n | \leq 2 \arctan\left(\sqrt{2\epsilon -\epsilon^2}\right) $, a.s., for all $n$. Hence, for all $t_0>0$,
\begin{align*}
\lim_{\eps \to 0} \E \left( \sup_{t \leq t_0} 
|\wt\bal^\eps_{t} - \wt\bal^\eps_{t-} | \right) 
\leq \lim_{\eps \to 0} 
2 \arctan\left(\sqrt{2\epsilon -\epsilon^2}\right)
= 0.
\end{align*}
This means that condition (1.14) of \cite[Theorem 1.4, Chapter 7]{EthierKurtz} is satisfied. It remains to show that the quadratic  
variation $\langle \wt\bal^\eps \rangle_t$ of $\wt\bal^\eps$ converges to $t$. More precisely, we have to show that for each $t\geq 0$, $\langle \wt\bal^\eps \rangle_t \to t$ in probability.
We will compute the quadratic variation $\langle \bal^\eps \rangle_n$ of $\bal^\eps_n$ first.

Recall from \eqref{InducedMC} that
\begin{equation*}%\label{InducedMC}
\bal_{n+1}^\epsilon = \bal_n^\epsilon + T_{n+1}^\epsilon
\bs_n^\epsilon
+(1-\bs_n^\epsilon)\left(\Lambda_{n+1}^\epsilon S_{n+1}^\epsilon+(1-\Lambda_{n+1}^\epsilon)T_{n+1}^\epsilon\right).
\end{equation*}
We have assumed that  $\{\bs^\epsilon_n, n\geq 0\}$ 
is in the stationary regime, i.e., for all $n\geq 0$,  $\bs^\eps_n$  is distributed according to the stationary distribution $\mu^\eps$, where
\begin{equation}\label{o18.2}
\mu^\epsilon(0) = \frac{1}{2-\epsilon},\qquad
 \mu^\epsilon(1)=\frac{1-\eps}{2-\epsilon}.
\end{equation} 

Let $\mathcal{F}_n^\epsilon=\sigma(\bal_k^\epsilon,\bs_k^\epsilon ,\, k=1,\cdots ,n)$. Then
\begin{align*}%\label{increment}
&\mathbb{E}\left(\left(\bal_{n+1}^\epsilon-\bal_n^\epsilon\right)^2 \mid \mathcal{F}_n^\epsilon\right)\\
&=\mathbb{E}\left((T_{n+1}^\epsilon)^2\right) \bs_n^\epsilon
+ \left(\eps\mathbb{E}\left((S_{n+1}^\epsilon)^2\right)
+(1-\eps)\mathbb{E}\left((T_{n+1}^\epsilon)^2\right)\right) (1-\bs_n^\epsilon),
\end{align*}
so
\begin{align}
\langle \bal^\epsilon\rangle_n
&=\sum_{k=0}^n \mathbb{E}\left(\left(\bal_{k+1}^\epsilon-\bal_k^\epsilon\right)^2 \mid \mathcal{F}_k^\epsilon\right)\notag\\
&=\mathbb{E}\left((T_{1}^\epsilon)^2\right)
\sum_{k=0}^n \bs_k^\epsilon
+ \left(\eps\mathbb{E}\left((S_{1}^\epsilon)^2\right)
+(1-\eps)\mathbb{E}\left((T_{1}^\epsilon)^2\right)\right)
\sum_{k=0}^n (1-\bs_k^\epsilon)\notag\\
&=n(1-\eps)\mathbb{E}\left((T_{1}^\epsilon)^2\right)  + \eps \mathbb{E}\left((T_{1}^\epsilon)^2\right) \sum_{k=0}^n \bs_k^\epsilon+\epsilon\mathbb{E}\left((S_{1}^\epsilon)^2\right)\sum_{k=0}^n (1-\bs_k^\epsilon).\label{o18.3}
\end{align}
It follows from Lemmas \ref{Support} and \ref{variance2} that 
\begin{align*}%\label{o18.4}
 &\lim_{\epsilon\rightarrow 0}\frac 1 {\sigma^2_\eps} (1-\eps)
\E  \big(\left( T^\epsilon_1 \right)^2\big)=1,\\
 &\lim_{\epsilon\rightarrow 0}\frac 1 {\sigma^2_\eps}
 \left( \eps\mathbb{E}\left((S_{1}^\epsilon)^2\right)
+\eps\mathbb{E}\left((T_{1}^\epsilon)^2\right) \right)=0.%\label{o18.5}
\end{align*}
This and \eqref{o18.3} imply that, for each $t\geq 0$,
\begin{align*}%\label{o18.6b}
\lim_{\eps\to 0} \langle \wt \bal^\epsilon\rangle_{ t} =
\lim_{\eps\to 0} \langle \bal^\epsilon\rangle_{ t/\sigma^2_\eps} =
\lim_{\eps\to 0} \langle \bal^\epsilon\rangle_{\lfloor t/\sigma^2_\eps\rfloor} =t,
\end{align*}
almost-surely. 
This completes the proof that  processes $\{\wt \bal^\eps_t, t\geq 0\}$ converge weakly to  Brownian motion as $\eps \to 0$.

To finish the proof, we need to time change the process $\{\wt \bal^\eps_t, t\geq 0\}$.
More precisely, we note that 
\begin{align}\label{o22.20}
\bbet^\eps(t) = \bal^\eps_{N^\eps(t)}
= \wt \bal^\eps_{\sigma^2_\eps N^\eps(t)}
= \wt \bal^\eps_{ (1/2)\eps^2 \log(1/\eps) N^\eps(t)}
\end{align}
for $t$ at which $N^\eps(t)$ jumps.
We will apply the last formula with $t = \frac{\pi s}{\eps\log(1/\eps)}$. We have
\begin{align}\label{n9.3}
\frac 1 2 \eps^2\log(1/\eps)N^\eps
\left(\frac{\pi s}{\eps\log(1/\eps)}\right)
= s+ \frac{2}{\pi}\Big(\frac{\pi}{2}\eps^2\log(1/\eps)N^\eps
\left(\frac{\pi s}{\eps\log(1/\eps)}\right) - \frac{\pi s}{2}\Big).
\end{align}
The jumps of $\bal^\eps$ are uniformly bounded by a quantity going to 0 when $\eps \to 0$, by Lemma \ref{Support}.
This observation, Lemma \ref{o22.21}, \eqref{o22.20} and \eqref{n9.3}
imply that for a fixed $s\geq 0$,
\begin{align*}
\lim_{\eps\to 0}
\left|\bbet^\eps\left(\frac{\pi s}{\eps\log(1/\eps)}\right)
- \wt \bal^\eps_{s}\right|
=0
\end{align*}
in probability. 
This formula, the uniform bound for the jumps of $\bal^\eps$ and weak convergence of processes $\{\wt \bal^\eps_t, t\geq 0\}$ to Brownian motion imply weak convergence of processes $\{\bbet^\eps(\pi t/(\eps \log(1/\eps))), t\geq 0\}$ to Brownian motion as $\eps \to 0$.
\end{proof}

\section{Reflections in a perturbed annulus: model and results}\label{DistortedTyre}

We will generalize Theorem \ref{MainTHM} to ``perturbed annuli'' 
whose boundaries are smooth curves close to circles. The precise definition follows.

For any function $f: \R \to \R$, $\Vert f \Vert_\infty$ will denote its supremum norm, i.e., 
$\Vert f \Vert_\infty =\sup_{x\in \R} \vert f(x)\vert$.

Let $(f_\epsilon)_{0<\epsilon< 1/2}$ and $(g_\epsilon)_{0<\epsilon< 1/2}$ be families of $2\pi$-periodic $C^3$ functions from $\R$ to $\R$, satisfying the following assumptions. 
\begin{enumerate}
\item[$\bold{H1}$:] For all $\alpha\in [0,2\pi]$, $\epsilon \leq f_\epsilon(\alpha)\leq 2\epsilon$ and $0\leq g_\eps(\alpha)\leq \eps$,
\item[$\bold{H2}$:] $f_\epsilon/\epsilon$ and $g_\epsilon/\epsilon$ converge uniformly to  $f$ and $g$, resp. 
\item[$\bold{H3}$:] $f_\epsilon'/\epsilon$ and $g_\epsilon'/\epsilon$ converge uniformly to $f'$ and $g'$, resp.
\item[$\bold{H4}$:]  $f_\epsilon''/\epsilon$ and $g_\epsilon''/\epsilon$ converge uniformly to $f''$ and $g''$, resp.
\item[$\bold{H5}$:] For some $c<\infty $ and all $\eps\in(0,1/2)$, $\Vert f_\epsilon '''\Vert_\infty < c$ and $\Vert g_\epsilon '''\Vert_\infty < c$.
\end{enumerate}

\begin{remark}
(i) A good example to keep in mind is $f_\epsilon(\alpha)=\epsilon f(\alpha)$ and $g_\eps(\alpha)=\eps g(\alpha)$, where $f:\mathbb{R}\rightarrow [1,2]$ and $g:\mathbb{R}\rightarrow [0,1]$ are
 $2\pi$-periodic $C^3$ function.

(ii) Assuming \bht, if $f_\epsilon'/\epsilon$ and $g_\epsilon'/\epsilon$ converge uniformly then they must converge to $f'$ and $g'$, resp. (see \cite[Thm. 7.17]{Rudin}).

(iii) Assumption \bho{} could have been $c_1\epsilon \leq f_\epsilon(\alpha)\leq c_2\epsilon$ and $c_3\eps\leq g_\eps(\alpha)\leq c_4\eps$, for some constants $0 < c_1< c_2< \infty$ and $0 \leq c_3 < c_4 < \infty$. We gave \bho{} its present form to avoid adding further complexity to the already highly complex notation.
\end{remark}

It will be convenient to use complex notation occasionally. For example, we will write $e^{i\alpha} = \exp(i\alpha) = (\cos\alpha, \sin\alpha)$.

Given $\epsilon\in (0,1/2)$, let $\Gamma_\epsilon^0, \Gamma_\eps^1$ be  closed simple curves parametrized as follows,
\begin{align}
\Gamma_\epsilon^0(\alpha)&=(1+g_\epsilon (\alpha))e^{i\alpha},
\qquad
\Gamma_\epsilon^1(\alpha)=(1-f_\epsilon (\alpha))e^{i\alpha},\label{d2.1}
\end{align} 
for $\alpha\in[0,2\pi)$; the formulas are valid for $\alpha \in \R$ because of the periodicity of $f_\eps$ and $g_\eps$.
Let $\mathcal{U}_\epsilon^j$ denote the bounded connected component of $\mathbb{R}^2\setminus \Gamma_\epsilon^j$ for $j=0,1$.

We  consider a ray of light traveling inside $\mathcal{D}_\epsilon := \overline{\mathcal{U}_\epsilon^0\setminus\mathcal{U}_\epsilon^1}$. Its position at time $t\geq 0$ will be denoted by
\begin{align*}
Q^\eps(t)= \bfr^\eps(t) \exp\left(i \bbet^\eps(t)\right).
\end{align*}
We assume that the trajectory $Q^\eps(t)$ conforms to
($\mathbf{A}$) and \eqref{LambertianDistrib} in Section \ref{ResultAnnulus}.

Our main result on reflections in a perturbed annulus is the following.
\begin{theorem}\label{MainThm3}
Let $h=f+g$.
Processes $\left\{\bbet^\eps\left(\frac{\pi }{\eps\log(1/\eps)}t\right), t\geq 0\right\}$ converge in law to $X$ in the Skorokhod topology as $\epsilon$ goes to $0$, where $X$ solves the stochastic differential equation
\begin{align}\label{j4.1}
dX_t =h'(X_t)dt + \sqrt{h(X_t)}dW_t,
\end{align}
and $W$ is  standard Brownian motion.
\end{theorem}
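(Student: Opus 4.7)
The strategy parallels Section \ref{o22.7}. The essential new difficulty is that the Markov chain $(\bal_n^\eps, \bs_n^\eps)$ becomes spatially inhomogeneous: jump distributions and transition probabilities now depend on $\bal_n^\eps = x$ through the local tube shape $(f_\eps, g_\eps)$ near $x$. My plan is first to redo the geometric bookkeeping of Section \ref{o22.7} with $(f_\eps, g_\eps)$ in place of the constants $(\eps, 0)$, and then to identify the limit by passing to the martingale problem for the diffusion generator $L\phi(x) = \tfrac{1}{2} h(x) \phi''(x) + h'(x) \phi'(x)$ associated with \eqref{j4.1}. Concretely, I would encode the $n$-th reflection by $(\bal_n^\eps, \bs_n^\eps)$ with $\bs_n^\eps \in \{0,1\}$ selecting the boundary curve $\Gamma_\eps^{\bs_n^\eps}$, fix $\bal_n^\eps = x$, and by rotation invariance take $x = \pi/2$. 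Taylor-expanding $f_\eps(\pi/2 + \delta)$ and $g_\eps(\pi/2 + \delta)$ up to third order using \bho-\bhfi, the tube locally looks like two nearly-straight walls of normal separation $\eps h(x) + O(\eps^2)$ with relative tilt of angle $\eps h'(x) + O(\eps^2)$, so that analogues of \eqref{xreps}-\eqref{o15.4} give the next intersection point and travel time as explicit functions of $(x, \bs_n^\eps, \Theta)$.

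Next I would extend Lemmas \ref{Support}, \ref{variance2}, and \ref{EstimParam} to the perturbed case. The support bound becomes $|\bal_{n+1}^\eps - \bal_n^\eps| \leq C\sqrt{\eps}$. Replicating the proof of Lemma \ref{variance2} with $\eps$ replaced by the local width $\eps h(x)$, the second conditional moment of a crossing jump satisfies
\[
\mathbb{E}\bigl[(\bal_{n+1}^\eps - \bal_n^\eps)^2 \bigm| \bal_n^\eps = x, \bs_n^\eps\bigr] = \tfrac{1}{2} \eps^2 h(x)^2 \log(1/\eps)(1+o(1)),
\]
and the expected travel time obeys $\mathbb{E}[\Delta\calT_{n+1}^\eps \mid \bal_n^\eps = x, \bs_n^\eps] = \tfrac{\pi}{2} \eps h(x)(1+o(1))$. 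The genuinely new quantity is the conditional first moment, which vanishes by reflection symmetry in the annulus case but here is driven by the boundary tilt. A second-order expansion of the intersection abscissa $x(\Theta)$ against the relatively tilted walls, combined with the same singular $1/\cos^2\theta$ integrand that produced the logarithm in Lemma \ref{variance2}, should yield
\[
\mathbb{E}\bigl[\bal_{n+1}^\eps - \bal_n^\eps \bigm| \bal_n^\eps = x, \bs_n^\eps\bigr] = \tfrac{1}{2} \eps^2 h(x) h'(x) \log(1/\eps) + o(\eps^2 \log(1/\eps)),
\]
with a cancellation between the inner- and outer-wall tilt contributions making the answer depend only on $h = f+g$, matching \eqref{j4.1}.

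With these moment estimates in hand I would identify the limit via the martingale problem for $L$. For $\phi \in C_c^3(\R)$, a Taylor expansion of $\phi(\bal_{n+1}^\eps)$ around $\bal_n^\eps$ yields
\[
\mathbb{E}\bigl[\phi(\bal_{n+1}^\eps) - \phi(\bal_n^\eps) \bigm| \bal_n^\eps = x, \bs_n^\eps\bigr] = \phi'(x)\,\mathbb{E}[\bal_{n+1}^\eps - \bal_n^\eps \mid \cdot] + \tfrac{1}{2}\phi''(x)\,\mathbb{E}[(\bal_{n+1}^\eps - \bal_n^\eps)^2 \mid \cdot] + O(\eps^{3/2}\|\phi'''\|_\infty).
\]
Summing over $N^\eps(\pi t/(\eps \log(1/\eps)))$ reflections and using that the reflection rate in rescaled time is $2/(\eps^2 h(x) \log(1/\eps))$ (forced by the travel-time estimate), these conditional increments accumulate to $\int_0^t L\phi(\bbet^\eps(\pi s/(\eps \log(1/\eps))))\,ds + o(1)$ in probability. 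Since $h \geq 1$ by \bho{} and $h \in C^2$ by \bht-\bhf, the martingale problem for $L$ is well-posed and its unique solution coincides in law with the SDE \eqref{j4.1}. Tightness follows from the uniform jump bound $\leq C\sqrt{\eps}$, and the passage from discrete to continuous time uses a perturbed analogue of Lemma \ref{o22.21}. Applying \cite[Chapter 4, Theorem 8.2 or Chapter 7, Theorem 1.4]{EthierKurtz} then yields Skorokhod convergence.

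The hardest step is the conditional-mean calculation. In the annulus, symmetry killed the first moment outright and only the second moment required the delicate logarithmic analysis. In the perturbed case, one must extract an $\eps^2 \log(1/\eps)$ contribution from within first-moment corrections induced by the boundary tilt, identifying precisely which terms in the Taylor expansion of $x(\Theta)$ feed into the singular regime (analogous to the decomposition \eqref{o17.3}-\eqref{d17.7}), and verifying that the $f'_\eps$-contribution from inner-to-outer jumps and the $g'_\eps$-contribution from outer-to-inner jumps combine, after weighting by the stationary measure $\mu^\eps$, into the single coefficient $h' = f' + g'$ of \eqref{j4.1}.
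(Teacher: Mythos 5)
Your proposal follows essentially the same route as the paper: uniform conditional moment estimates for the angular increment and the travel time, the key observation that the $h(x)$-dependence cancels between the jump moments and the reflection rate, and a semimartingale/martingale-problem convergence theorem from Ethier--Kurtz. Your displayed drift estimate $\E[\bal_{n+1}^\eps - \bal_n^\eps \mid \bal_n^\eps = x, \bs_n^\eps] \sim \tfrac12\eps^2 h(x)h'(x)\log(1/\eps)$, stated uniformly in $\bs_n^\eps$, is the same as the paper's Lemmas \ref{EstimT} and \ref{EstimR}, and the rate and variance calculations that recover $h'$ and $h$ in the limit are correct.

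However, your closing remark, that ``the $f'_\eps$-contribution from inner-to-outer jumps and the $g'_\eps$-contribution from outer-to-inner jumps combine, after weighting by the stationary measure $\mu^\eps$, into $h'=f'+g'$,'' contradicts your own displayed formula and would, if carried out literally, produce the wrong constant. Each crossing jump already carries the full drift $\tfrac12\eps^2 h(x)h'(x)\log(1/\eps)$: in the paper's Lemma \ref{EstimT} the Lambertian cone at the departure wall is effectively tilted by $\rho_1 = \gamma^0_\eps + \gamma^1_\eps$, which is $\sim \eps(g'+f') = \eps h'$, i.e.\ \emph{both} boundary tilts enter a single crossing additively, and the same happens in the reverse direction (Lemma \ref{EstimR}). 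Since $\mu^\eps(0), \mu^\eps(1) \to 1/2$, averaging a hypothetical $f'h$-contribution from one direction with a $g'h$-contribution from the other would give $h'h/2$, off by a factor of two. The drift coefficient $h'$ comes from a single crossing, not from stationary averaging. Also, \cite[Ch.\ 7, Thm.\ 1.4]{EthierKurtz} applies only to a Brownian limit; for the variable-coefficient diffusion here the paper invokes \cite[Ch.\ 7, Thm.\ 4.1]{EthierKurtz}, while your generator route would need \cite[Ch.\ 4, Thm.\ 8.2]{EthierKurtz} rather than the Brownian CLT.
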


\section{Reflections in a perturbed annulus: Proofs}\label{j4.5}

We will encode the $n$-th reflection point as 
\begin{equation}\label{o13.2b}
 \wp_\eps(\bal_n^\eps,\bs^\eps_n)\exp\left(i\bal^\eps_n\right)
= \Gamma_\eps^{\bs^\eps_n}\left(\bal_n^\eps\right),
\end{equation}
where $\bs^\eps_n$ can be 0 or 1, $\bal^\eps_n\in \R$ is chosen for $n\geq 0$ so that $|\bal^\eps_{n+1} - \bal^\eps_n| < \pi$ and
\begin{equation}
\wp_\eps(\alpha,s) = 1 +(1-s)g_\eps(\alpha)-s f_\eps(\alpha).
\end{equation}

By convention, the first reflection  occurs at time $t=0$.

We will sometimes write $\bal^\eps(n)$ instead of $\bal^\eps_n$, for typographical convenience.

It is clear that $\{(\bal^\eps_n,\bs^\eps_n), n\geq 0\}$ is a time homogeneous discrete time Markov chain. Since the light ray travels  with speed 1, the time  between the $k$-th and $(k-1)$-st reflections can be calculated as 
\begin{align}\label{o19.1b}
\Delta\calT^\eps_{k} := 
\left| \wp_\eps(\bal_k^\eps,\bs^\eps_k)\exp(i\bal^\eps_{k})
-  \wp_\eps(\bal_{k-1}^\eps,\bs^\eps_{k-1})
\exp(i\bal^\eps_{k-1}) \right|.
\end{align}

Set $\mathcal{T}_0^\epsilon=0$ and for $n\geq 1$,
\begin{align}\label{o22.8b}
\mathcal{T}_n^\epsilon=\sum_{k=1}^n\Delta\calT_k^\epsilon.
\end{align}
Given $t>0$ and $\eps\in(0,1/2)$, let
\begin{equation}\label{TimeInverse2}
N^\epsilon(t)=\inf\left\{n\geq 0 \; :\; \mathcal{T}_{n+1}^\epsilon> t\right\}
=\sup\left\{n\geq 0 \; :\;  \mathcal{T}_{n}^\epsilon\leq t \right\}.
\end{equation}
Recall that  $N^\epsilon(t)$ is  the number of reflections made by the light ray before time $t$, while $\mathcal{T}_n^\epsilon$ represents the time of the $n$-th reflection.
We have
\begin{equation}\label{o13.3b}
Q(\mathcal{T}_n^\epsilon)=\wp_\eps(\bal_n^\eps,\bs^\eps_n)\exp\left(i\bal^\eps_n\right)
= \Gamma_\eps^{\bs^\eps_n}\left(\bal_n^\eps\right).
\end{equation}

\begin{lemma}\label{convex} There exists $\epsilon_0\in (0,1/4)$ such that for all $\epsilon \in (0,\epsilon_0)$, $\mathcal{U}_\epsilon^0$ and $\mathcal{U}_\epsilon^1$ are strictly convex.
\end{lemma}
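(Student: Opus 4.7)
The plan is to reduce strict convexity of $\mathcal{U}_\eps^j$ to showing that the signed curvature of its boundary curve $\Gamma_\eps^j$ is everywhere strictly positive for small $\eps$. Both boundary curves are smooth simple closed curves in $\R^2$ (simplicity is automatic from the polar form $\alpha\mapsto r(\alpha)e^{i\alpha}$ with $r>0$, which by \bho{} is star-shaped about the origin for $\eps<1/2$, so the parametrization is injective on $[0,2\pi)$), and it is a classical fact from plane differential geometry that such a curve with positive signed curvature bounds a strictly convex region.

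First I would use the standard curvature formula for a polar parametrization $\alpha\mapsto r(\alpha)e^{i\alpha}$, namely
\begin{equation*}
\kappa(\alpha) \;=\; \frac{r(\alpha)^2 + 2\,r'(\alpha)^2 - r(\alpha)\,r''(\alpha)}{\bigl(r(\alpha)^2 + r'(\alpha)^2\bigr)^{3/2}},
\end{equation*}
applied with $r=1+g_\eps$ for $\Gamma_\eps^0$ and $r=1-f_\eps$ for $\Gamma_\eps^1$. Note that for the unperturbed unit circle this gives $\kappa\equiv 1$, so one expects the perturbed curves to have $\kappa$ close to $1$.

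Next I would use \bho{}--\bhf{} to quantify this. Since $f_\eps/\eps$, $f_\eps'/\eps$, $f_\eps''/\eps$ converge uniformly to continuous $2\pi$-periodic limits $f,f',f''$ (and similarly for $g_\eps$), all of these ratios are uniformly bounded, so
\begin{equation*}
\Vert f_\eps\Vert_\infty,\;\Vert f_\eps'\Vert_\infty,\;\Vert f_\eps''\Vert_\infty \;=\; O(\eps),
\end{equation*}
and likewise for $g_\eps$. Substituting $r=1\pm O(\eps)$, $r'=O(\eps)$, $r''=O(\eps)$ into the curvature formula, the numerator becomes $1+O(\eps)$ and the denominator $1+O(\eps)$, both with constants uniform in $\alpha$. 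Hence there exists $\eps_0\in(0,1/4)$ such that $\kappa(\alpha)\geq 1/2$ for all $\alpha\in\R$ and all $\eps\in(0,\eps_0)$, for both curves.

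Finally, for each such $\eps$ the boundary $\Gamma_\eps^j$ is a $C^2$ simple closed curve of strictly positive signed curvature, which by the standard characterization of planar convex bodies implies that the bounded component $\mathcal{U}_\eps^j$ is strictly convex. There is no real obstacle: the only place demanding any care is the bookkeeping that shows the leading constant in the numerator of $\kappa$ is $+1$ (not $0$), and this is immediate from the $r^2$ term being $1+O(\eps)$ while all perturbative contributions are $O(\eps)$. Assumption \bhfi{} plays no role in this lemma; it will be needed only for later finer expansions.
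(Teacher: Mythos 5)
Your proof is correct and follows the same route as the paper's: reduce strict convexity to showing the signed curvature of $\Gamma_\eps^j$ is everywhere positive, compute $\kappa$ from the polar curvature formula (your general formula specializes to the paper's \eqref{curvature}--\eqref{curvature2}), and use \bho--\bhf{} to show $\kappa\to 1$ uniformly. The only difference is that you spell out the $O(\eps)$ bookkeeping slightly more explicitly, which the paper compresses into ``it is clear.''
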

\begin{proof}
For $j=0,1$, $\Gamma_\epsilon^j$ is a closed simple curve, so it suffices to show that there exists $\epsilon_0>0$ such that for all $\epsilon \in (0,\epsilon_0)$, its curvature is strictly positive at every point.
 
Standard calculations show that the curvature of $\Gamma_\epsilon^0$ at $\Gamma_\epsilon^0(\alpha)$ is given by 
\begin{align}\label{curvature}
\kappa_\epsilon^0(\alpha)&:=\frac{(1+ g_\epsilon(\alpha))\big(1+g _\epsilon(\alpha)-g_\epsilon''(\alpha)\big)+2 g_\epsilon'(\alpha)^2}{\big((1+g_\epsilon(\alpha))^2+ g_\epsilon'(\alpha)^2\big)^{3/2}},
\end{align}
while the curvature of $\Gamma_\epsilon^1$ at $\Gamma_\epsilon^1(\alpha)$ is given by
\begin{align}\label{curvature2}
\kappa_\epsilon^1(\alpha)&:=\frac{(1- f_\epsilon(\alpha))\big(1- f_\epsilon(\alpha)+ f_\epsilon''(\alpha)\big)+2 f_\epsilon'(\alpha)^2}{\big((1- f_\epsilon(\alpha))^2+ f_\epsilon'(\alpha)^2\big)^{3/2}}.
\end{align}

 By assumptions $\bold{H1}$-$\bold{H4}$, it is clear that $\kappa_\epsilon^s$ converges uniformly to $1$ as $\epsilon$ goes to $0$, for $s=0,1$. 
\end{proof}

 From now on, we will assume that $\epsilon\in (0,\epsilon_0)$, where $\eps_0$ is given in Lemma \ref{convex}, so that $\mathcal{U}_\epsilon^0$ and $\mathcal{U}_\epsilon^1$ are  strictly convex for all values of the parameter $\eps$. This implies that when the light ray reflects from $\Gamma_\epsilon^1$, the next reflection is from $\Gamma_\epsilon^0$. On the other hand, when the light ray reflects from $\Gamma_\epsilon^0$ at $\Gamma_\epsilon^0(\alpha)$, there is a strictly positive probability $p_\epsilon(\alpha)$ that the next reflection point is again on $\Gamma_\epsilon^0$. In other words,
\begin{align*}
p_\epsilon(\alpha) = \P( \bs^\eps_{n+1} = 0 \mid \bs^\eps_n =0, 
\bal_n^\eps = \alpha)>0.
\end{align*}
 
Since $\Gamma_\epsilon^0$ and $\Gamma_\epsilon^1$ are not necessary circles,  $p_\epsilon(\alpha)$ may depend on the reflection point $Q(\mathcal{T}_n^\epsilon)$. 

\begin{definition}\label{o18.1b}

We will define some random variables for $n\geq 1$, $\alpha \in [0,2\pi)$ and $\eps\in(0,\eps_0)$.

Let $T^\eps_n(\alpha)$ be a random variable with the distribution of 
$\bal^\eps_1 - \bal^\eps_0$ conditioned on $\{\bs^\eps_0 = 1, \bal^\eps_0=\alpha\}$, i.e., on the event that the light ray starts from $\Gamma^1_\eps(\alpha) $.

Let $R^\eps_n(\alpha)$ be a random variable with the distribution of 
$\bal^\eps_1 - \bal^\eps_0$ conditioned on $\{\bs^\eps_0 = 0, \bs^\eps_1= 1, \bal^\eps_0=\alpha\}$, i.e., on the event that the light ray starts from $\Gamma^0_\eps(\alpha) $ and the next reflection is on the inner boundary.

Let $S^\eps_n(\alpha)$ be a random variable with the distribution of 
$\bal^\eps_1 - \bal^\eps_0$ conditioned on $\{\bs^\eps_0 = 0, \bs^\eps_1= 0, \bal^\eps_0=\alpha\}$, i.e., on the event that the light ray starts from $\Gamma^0_\eps(\alpha) $ and the next reflection is also on the outer boundary.

Let $\Lambda^\eps_n(\alpha)$ be a random variable with the distribution given by
$\P(\Lambda^\eps_n(\alpha) = 1)=1- \P(\Lambda^\eps_n(\alpha) = 0)=p_\eps(\alpha)$.

We assume that all  random variables listed above, for all $n\geq 1$, $\alpha \in [0,2\pi)$ and $\eps\in(0,\eps_0)$,  are jointly independent.
\end{definition}

The process $\{(\bal^\eps_n,\bs^\eps_n), n\geq 0\}$ can be represented as follows. For $n\geq 0$,
\begin{align}\label{InducedMCgeneralise}
\bs_{n+1}^\epsilon &= (1-\bs_n^\epsilon)(1-\Lambda_{n+1}^\epsilon(\bal_n^\epsilon)),\\
\bal_{n+1}^\epsilon &= \bal_n^\epsilon + T_{n+1}^\epsilon(\bal_n^\epsilon) \bs_n^\epsilon(\bal_n^\epsilon) \notag\\
&\quad+(1-\bs_n^\epsilon(\bal_n^\epsilon))\left(\Lambda_{n+1}^\epsilon(\bal_n^\epsilon) S_{n+1}^\epsilon(\bal_n^\epsilon) +(1-\Lambda_{n+1}^\epsilon(\bal_n^\epsilon))R_{n+1}^\epsilon (\bal_n^\epsilon)\right).\notag
\end{align}

\begin{lemma}\label{maxDep} 
For all $n\geq 0$ and $0<\eps<1/2$, a.s., $\vert \bal_{n+1}^\epsilon-\bal_n^\epsilon\vert \leq 12\sqrt{\epsilon}$. 
\end{lemma}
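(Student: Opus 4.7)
The plan is to enclose $\mathcal{D}_\eps$ inside a standard annulus and then reduce the claim to an elementary planar-geometry estimate. By assumption \bho{} and \eqref{d2.1}, the outer boundary $\Gamma_\eps^0$ has radii $1+g_\eps(\alpha)\in[1,1+\eps]$ and the inner boundary $\Gamma_\eps^1$ has radii $1-f_\eps(\alpha)\in[1-2\eps,1-\eps]$. Consequently $\mathcal{D}_\eps\subset A_\eps:=\{z\in\R^2:1-2\eps\leq|z|\leq 1+\eps\}$, and because $\Gamma_\eps^1$ is star-shaped with respect to the origin, the open disc $\{|z|<1-2\eps\}$ sits entirely inside $\mathcal{U}_\eps^1$ and is therefore disjoint from $\mathcal{D}_\eps$. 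In particular the straight segment between $P_n:=\Gamma_\eps^{\bs_n^\eps}(\bal_n^\eps)$ and $P_{n+1}:=\Gamma_\eps^{\bs_{n+1}^\eps}(\bal_{n+1}^\eps)$ lies inside $A_\eps$ and avoids $\{|z|<1-2\eps\}$.

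I would then establish the purely geometric claim that any chord of $A_\eps$ which avoids the disc $\{|z|<1-2\eps\}$ subtends at the origin an angle of at most $2\arccos\!\big((1-2\eps)/(1+\eps)\big)$. Let $F$ denote the foot of the perpendicular from the origin $O$ to the line through $P_n,P_{n+1}$, and set $d=|OF|$. If $F$ lies on the chord, after a rotation write $P_n=(d,h_n)$ and $P_{n+1}=(d,-h_{n+1})$ with $h_n,h_{n+1}\geq 0$; since $|P_j|\leq 1+\eps$ we get $h_j\leq\sqrt{(1+\eps)^2-d^2}$, and since the chord avoids $\{|z|<1-2\eps\}$ we get $d\geq 1-2\eps$. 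The angular span equals $\arctan(h_n/d)+\arctan(h_{n+1}/d)$, which is decreasing in $d$ and maximised jointly at $d=1-2\eps$, $h_j=\sqrt{(1+\eps)^2-(1-2\eps)^2}$; the identity $1+\tan^2=\sec^2$ then collapses the bound to $2\arccos((1-2\eps)/(1+\eps))$. If $F$ lies outside the chord, both endpoints sit on the same side of $F$ and an analogous but strictly sharper computation gives an angular span of at most $\arccos((1-2\eps)/(1+\eps))$, so the same overall bound holds in every case.

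Finally, I convert the arc-cosine to the form in the lemma. From $(1-2\eps)/(1+\eps)=1-3\eps/(1+\eps)\geq 1-3\eps$ and the elementary inequality $\arccos(1-x)\leq\pi\sqrt{x/2}$ valid on $[0,2]$ (which is equivalent to $\cos y\leq 1-2y^2/\pi^2$ on $[0,\pi]$, easily checked by verifying the endpoint values and the sign changes of the derivative), one concludes
\[
|\bal_{n+1}^\eps-\bal_n^\eps|\;\leq\;2\arccos\!\bigg(\frac{1-2\eps}{1+\eps}\bigg)\;\leq\;2\pi\sqrt{\frac{3\eps}{2(1+\eps)}}\;\leq\;\pi\sqrt{6}\,\sqrt{\eps}\;<\;12\sqrt{\eps}.
\]
The only delicate part of the argument is the case analysis for the position of $F$ in the geometric claim; once that is carried out, everything else reduces to straightforward calculus inequalities, and none of the finer structure of $f_\eps$ or $g_\eps$ (beyond \bho) is used.
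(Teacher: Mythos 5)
Your proof is correct and follows essentially the same route as the paper: sandwich $\calD_\eps$ between the concentric circles of radii $1-2\eps$ and $1+\eps$ (this is precisely the inclusion the paper cites) and then bound the angular span of a chord of that annulus by elementary planar geometry. The paper compresses the geometric step into a one-line reference to the argument in Lemma \ref{Support}; you spell it out in full via the foot-of-perpendicular case analysis and the calculus inequality $\arccos(1-x)\leq\pi\sqrt{x/2}$, and in passing obtain the sharper constant $\pi\sqrt{6}<12$.
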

\begin{proof}
Let $\calB((x,y),r)$ denote the open disc with center $(x,y)$ and radius $r$.
The estimate  follows easily from the argument in the proof of Lemma \ref{Support} and the fact that $\calB\left((0,0),1-2\epsilon\right)\subset \mathcal{D}_\epsilon\subset \overline\calB\left((0,0),1+\epsilon\right)$.
\end{proof}

For $\alpha\in\mathbb{R}$ and $s\in\{0,1\}$, let $\gamma_\epsilon^s(\alpha)$ denote the angle between the inner normal vector in $\mathcal{D}_\epsilon$ at $\Gamma_\epsilon^j(\alpha)$ and the vector $(2s-1)\wp_\eps(\alpha,s) e^{i\alpha}$. The latter vector goes from $\wp_\eps(\alpha,s) e^{i\alpha}$ to $(0,0)$, so it has the same direction as $- e^{i\alpha}$.
By convention, we choose the sign of $\gamma_\epsilon^s(\alpha)$ so that it is positive if $s=0$ and $g'_\eps(\alpha) >0$, or  $s=1$ and $f'_\eps(\alpha) >0$. This means that if $\gamma_\epsilon^s(\alpha)>0$ for both $s=0$ and $s=1$ then  $\mathcal{D}_\epsilon$ is locally widening in the direction of increasing $\alpha$.

\begin{lemma}\label{angleecart} 
We have
\begin{align}\label{d4.1}
\gamma_\epsilon^0(\alpha)&= \arcsin\left(\frac{g_\epsilon'(\alpha)}{\sqrt{(1+g_\epsilon(\alpha))^2+g_\epsilon'(\alpha)^2}}\right),\\
\gamma_\epsilon^1(\alpha)&= \arcsin\left(\frac{f_\epsilon'(\alpha)}{\sqrt{(1-f_\epsilon(\alpha))^2+f_\epsilon'(\alpha)^2}}\right).
\label{d4.2}
\end{align}
\end{lemma}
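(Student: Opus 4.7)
The plan is a direct computation of the two angles via elementary differential geometry of the parametrized curves. I will work in the local orthonormal frame attached to $\Gamma^s_\eps(\alpha)$ given by $(e^{i\alpha}, i e^{i\alpha})$, i.e.\ the radial and tangential-to-the-base-circle directions.

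First I would differentiate \eqref{d2.1} with respect to $\alpha$:
\begin{align*}
\frac{d}{d\alpha}\Gamma_\eps^0(\alpha) &= \bigl(g_\eps'(\alpha) + i(1+g_\eps(\alpha))\bigr)e^{i\alpha},\\
\frac{d}{d\alpha}\Gamma_\eps^1(\alpha) &= \bigl(-f_\eps'(\alpha) + i(1-f_\eps(\alpha))\bigr)e^{i\alpha},
\end{align*}
so that the tangent vectors at $\Gamma_\eps^0(\alpha)$ and $\Gamma_\eps^1(\alpha)$, expressed in the local basis $(e^{i\alpha}, ie^{i\alpha})$, have components $(g_\eps'(\alpha), 1+g_\eps(\alpha))$ and $(-f_\eps'(\alpha), 1-f_\eps(\alpha))$ respectively, with norms $\sqrt{(1+g_\eps(\alpha))^2+g_\eps'(\alpha)^2}$ and $\sqrt{(1-f_\eps(\alpha))^2+f_\eps'(\alpha)^2}$.

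Second, I would obtain the inward normal to $\calD_\eps$ at each boundary point by a $\pi/2$ rotation of the (unit) tangent, with the sign chosen so that the normal points into $\calD_\eps$ (i.e.\ toward the origin on $\Gamma_\eps^0$ and away from the origin on $\Gamma_\eps^1$). A short sign check gives, in the local basis,
\begin{align*}
\mathbf{n}_\eps^0(\alpha) &= \frac{1}{\sqrt{(1+g_\eps(\alpha))^2+g_\eps'(\alpha)^2}}\bigl(-(1+g_\eps(\alpha)),\, g_\eps'(\alpha)\bigr),\\
\mathbf{n}_\eps^1(\alpha) &= \frac{1}{\sqrt{(1-f_\eps(\alpha))^2+f_\eps'(\alpha)^2}}\bigl(1-f_\eps(\alpha),\, f_\eps'(\alpha)\bigr).
\end{align*}

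Third, since the reference vector $(2s-1)\wp_\eps(\alpha,s)e^{i\alpha}$ is parallel to $-e^{i\alpha}$ for $s=0$ and to $e^{i\alpha}$ for $s=1$, its local-basis direction is $(-1,0)$ or $(1,0)$ respectively. Taking the sine of the (signed) angle as the second local component of $\mathbf{n}_\eps^s(\alpha)$ (and noting the sign of that component is that of $g_\eps'(\alpha)$ for $s=0$ and of $f_\eps'(\alpha)$ for $s=1$, matching the declared sign convention) yields
\[
\sin\gamma_\eps^0(\alpha) = \frac{g_\eps'(\alpha)}{\sqrt{(1+g_\eps(\alpha))^2+g_\eps'(\alpha)^2}}, \qquad \sin\gamma_\eps^1(\alpha) = \frac{f_\eps'(\alpha)}{\sqrt{(1-f_\eps(\alpha))^2+f_\eps'(\alpha)^2}},
\]
from which \eqref{d4.1}--\eqref{d4.2} follow by taking $\arcsin$.

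The only point requiring care, and the place where one can easily slip, is the bookkeeping of orientations: one must pick the correct $\pm\pi/2$ rotation on each curve so that the normal points into $\calD_\eps$, and check that the resulting sign of the angle $\gamma_\eps^s(\alpha)$ agrees with the sign convention stated just before the lemma (positive when the domain widens in the direction of increasing $\alpha$). Everything else is a routine computation.
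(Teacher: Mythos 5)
Your proof is correct and is essentially the same computation as the paper's. The paper takes a small shortcut by working directly with the tangent $(\Gamma_\eps^j)'(\alpha)$ and noting that $\sin\gamma_\eps^j$ equals the radial component of the unit tangent (since the normal-vs-radial angle equals the tangent-vs-tangential angle after both vectors are rotated by $\pi/2$), whereas you construct the inward normal explicitly before taking the sine; the two routes are the same elementary calculation in the same local frame.
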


\begin{proof} 
Let $\langle \,.\, , \,.\,\rangle$ denote the scalar product. Then 
\begin{align*}
\gamma_\epsilon^0(\alpha)
= \arcsin \left\langle \frac{ (\Gamma_\epsilon^0) '(\alpha) }{ |(\Gamma_\epsilon^0) '(\alpha)|} , e^{i\alpha}\right\rangle.
\end{align*}
We have
\begin{align*}
(\Gamma_\epsilon^0) '(\alpha)= 
\frac{d}{d\alpha} \left((1+g_\epsilon(\alpha)) e^{i\alpha}\right)
= (1+g_\epsilon(\alpha)) i e^{i\alpha} + g'_\epsilon(\alpha) e^{i\alpha},
\end{align*}
so that 
\begin{equation}
\left\langle (\Gamma_\epsilon^0) '(\alpha)  , e^{i\alpha}\right\rangle= g_\epsilon'(\alpha).
\end{equation}
Thus 
\begin{align*}
\gamma_\epsilon^0(\alpha)
= \arcsin \left\langle \frac{ (\Gamma_\epsilon^0) '(\alpha) }{ |(\Gamma_\epsilon^0) '(\alpha)|} , e^{i\alpha}\right\rangle
=  \arcsin\left(\frac{g_\epsilon'(\alpha)}{\sqrt{(1+g_\epsilon(\alpha))^2+g_\epsilon'(\alpha)^2}}\right).
\end{align*}
This proves \eqref{d4.1}. The proof of \eqref{d4.2} is analogous.
\end{proof}
\begin{lemma}\label{probReste} For some $\eps_1>0$, all $\epsilon\in (0,\epsilon_1)$ and all $\alpha\in \R$,
\begin{equation}\label{d2.2}
\eps/2 \leq p_\epsilon(\alpha) \leq  \eps\left(4 +6 \|g'\|_\infty\right).
\end{equation}
\end{lemma}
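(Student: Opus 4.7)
The plan is to reduce the event $\{\bs_1^\eps = 0\}$ to a geometric criterion on the perpendicular distance from the origin to the reflected ray line, and then integrate the Lambertian density in closed form. The key input is the sandwich $\calB(0,1-2\eps) \subset \mathcal{U}_\eps^1 \subset \calB(0,1-\eps)$, which follows from the fact that, by \bho, every boundary point $\Gamma_\eps^1(\alpha)$ lies at distance $1-f_\eps(\alpha) \in [1-2\eps, 1-\eps]$ from the origin, combined with the strict convexity of $\mathcal{U}_\eps^1$ from Lemma \ref{convex} and the fact that $0 \in \mathcal{U}_\eps^1$.

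Starting from $P = \Gamma_\eps^0(\alpha)$ with Lambertian angle $\Theta \in (-\pi/2,\pi/2)$ measured from the inward normal identified by Lemma \ref{angleecart}, an elementary calculation gives the perpendicular distance
\[
d(\Theta) = (1+g_\eps(\alpha))\,\bigl|\sin(\Theta - \gamma_\eps^0(\alpha))\bigr|
\]
from the origin to the ray line. Since $\Theta \in (-\pi/2,\pi/2)$ forces the ray to enter $\mathcal{D}_\eps$ and $\mathcal{U}_\eps^1$ is strictly convex, the event $\{\bs_1^\eps = 0\}$ coincides with the event that the ray line avoids $\mathcal{U}_\eps^1$, and the sandwich then gives
\[
\P(d(\Theta) > 1-\eps) \leq p_\eps(\alpha) \leq \P(d(\Theta) \geq 1-2\eps).
\]
A direct integration of the density $\tfrac{1}{2}\cos\theta$ over $\{|\sin(\theta - \gamma_\eps^0(\alpha))| > \rho\}$ via the substitution $\psi = \theta - \gamma_\eps^0(\alpha)$ and the identity $\sin(\mu-\beta)-\sin(\mu+\beta) = -2\cos\mu\sin\beta$ yields the closed form
\[
\P(d(\Theta) > r) = 1 - \frac{r\cos\gamma_\eps^0(\alpha)}{1+g_\eps(\alpha)} = 1 - \frac{r}{\sqrt{(1+g_\eps(\alpha))^2+(g_\eps'(\alpha))^2}},
\]
the second equality using \eqref{d4.1}.

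Plugging $r = 1-\eps$ and using $g_\eps \geq 0$ gives $p_\eps(\alpha) \geq 1 - (1-\eps)/(1+g_\eps(\alpha)) \geq \eps/(1+\eps) \geq \eps/2$. Plugging $r = 1-2\eps$ and using $\sqrt{a^2+b^2} \leq a+|b|$ gives $p_\eps(\alpha) \leq (3\eps + |g_\eps'(\alpha)|)/(1+g_\eps(\alpha)+|g_\eps'(\alpha)|) \leq 3\eps + |g_\eps'(\alpha)|$; finally \bht{}--\bhth{} ensure $|g_\eps'(\alpha)|/\eps \leq \|g'\|_\infty + 1$ uniformly in $\alpha$ for $\eps < \eps_1$ small enough, so $p_\eps(\alpha) \leq 4\eps + \eps\|g'\|_\infty \leq \eps(4+6\|g'\|_\infty)$.

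The only subtle step is the geometric equivalence between $\{\bs_1^\eps = 0\}$ and the line missing $\mathcal{U}_\eps^1$: this requires Lemma \ref{convex} (so that convexity collapses the line-versus-ray distinction) together with the observation that the forward ray, upon entering $\mathcal{D}_\eps$, must eventually exit through $\Gamma_\eps^0$ or $\Gamma_\eps^1$, and these alternatives are distinguished precisely by whether the line meets $\mathcal{U}_\eps^1$. The remaining steps are routine trigonometry and a direct application of the uniform convergence hypotheses.
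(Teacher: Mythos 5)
Your proof is correct and is genuinely different from the paper's. The paper proves both bounds by constructing a pair of auxiliary concentric circles whose common center is the foot of the perpendicular from the origin to the normal line $L$ at $\Gamma_\eps^0(\alpha)$; since the Lambertian density is measured relative to the normal at $\Gamma_\eps^0(\alpha)$, Lemma~\ref{remain} for the true annulus applies directly to that pair, and the sandwich estimate on $|\Gamma_\eps^0(\alpha)-x|$ converts it into bounds on $p_\eps(\alpha)$. Your route instead works directly with the perpendicular distance $d(\Theta)$ from the origin to the (full) ray line, uses the cruder sandwich $\calB(0,1-2\eps)\subset\mathcal{U}_\eps^1\subset\calB(0,1-\eps)$, and then evaluates the tail probability $\P(d(\Theta)>r)=1-r\cos\gamma_\eps^0(\alpha)/(1+g_\eps(\alpha))$ in closed form by a single integration, absorbing the tilt $\gamma_\eps^0(\alpha)$ of the normal into the formula. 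What the paper's argument buys is a painless reuse of Lemma~\ref{remain} (no fresh integration, no handling of the tilt in the integrand); what yours buys is a self-contained, exact expression for the avoidance probability with a simpler pair of reference circles, and a verification of all the steps in one pass. Two small points worth noting explicitly if you were to polish this: (i) the closed-form integral uses that $\arcsin\rho<\pi/2-|\gamma_\eps^0(\alpha)|$, which holds for small $\eps$ because $\pi/2-\arcsin\rho=\arccos\rho\gtrsim\sqrt{\eps}$ while $|\gamma_\eps^0(\alpha)|=O(\eps)$ — this should be stated since it justifies the limits of integration; (ii) the line-vs-forward-ray reduction is fine, but the supporting observation is that the backward ray exits $\mathcal{U}_\eps^0$ immediately and by convexity never re-enters, hence never meets $\mathcal{U}_\eps^1$; you gesture at this but it deserves a sentence. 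With those made explicit, this is a clean alternative proof.
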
 
\begin{proof}

Let $L$ be the straight line passing through $\Gamma_\epsilon^0(\alpha)$ and orthogonal to $\Gamma_\epsilon^0$ at this point. It
follows from \bhth{} and Lemma \ref{angleecart} that for some $\eps_2>0$ and all $\eps\in(0,\eps_2)$, the 
angle $\theta_\eps$ between $L$ and the line segment with endpoints $\Gamma_\epsilon^0(\alpha)$ and $(0,0)$ is less  than $2\eps \|g'\|_\infty$.
Hence, for some $\eps_3>0$ and all $\eps\in(0,\eps_3)$, the distance between $L$ and $(0,0)$ is less than $3\eps \|g'\|_\infty$. Let $x$ be the point in $L$ closest to $(0,0)$. 
Thus, $\dist(x, (0,0)) <3\eps \|g'\|_\infty$ for small $\eps$.
Assumption \bho{} and \eqref{d2.1} imply that the circle $\calC(x, 1- \eps(2 +3 \|g'\|_\infty) ) $ lies inside $\mathcal{U}_\epsilon^1$. 
Hence, a light ray starting from $\Gamma_\epsilon^0(\alpha)$ will hit $\Gamma_\epsilon^1$ before hitting the circle $\calC(x, 1- \eps(2 +3 \|g'\|_\infty) ) $. The Lambertian direction of the light ray starting from $\Gamma_\epsilon^0(\alpha)$, defined as in \eqref{LambertianDistrib}, is the same whether it is defined relative to  $\mathcal{D}_\epsilon$ or the interior of $\calC(x, |\Gamma_\epsilon^0(\alpha) - x| ) $ because the boundaries of the two domains are tangent at $\Gamma_\epsilon^0(\alpha)$. We can apply 
Lemma \ref{remain} to the domain between the circles $\calC(x, 1- \eps(2 +3 \|g'\|_\infty) ) $ and $\calC(x, |\Gamma_\epsilon^0(\alpha) - x| ) $. 
By \bho{}, for small $\eps>0$,
\begin{align}\label{j13.1}
1- 3\eps \|g'\|_\infty \leq
|\Gamma_\epsilon^0(\alpha) - x|  \leq |\Gamma_\epsilon^0(\alpha) - (0,0)| 
+ |(0,0) - x| \leq  1+ \eps +3\eps \|g'\|_\infty.
\end{align}
Lemma \ref{remain}, \eqref{j13.1}
and rescaling by the factor of $|\Gamma_\epsilon^0(\alpha) - x|$ imply that,
for sufficiently small $\eps>0$,
\begin{align*}
p_\eps(\alpha) &\leq
\frac{|\Gamma_\epsilon^0(\alpha) - x| -( 1- \eps(2 +3 \|g'\|_\infty) ) }
{|\Gamma_\epsilon^0(\alpha) - x|}
\leq \frac{ 1+ \eps +3\eps \|g'\|_\infty -( 1- \eps(2 +3 \|g'\|_\infty) ) }
{1- 3\eps \|g'\|_\infty}\\
&= \frac{  \eps(3 +6 \|g'\|_\infty) ) }
{1- 3\eps \|g'\|_\infty} \leq \eps(4 +6 \|g'\|_\infty) ).
\end{align*}

Assumption \bho{} and \eqref{d2.1} imply that the circle $\mathcal{U}_\epsilon^1$ lies inside the circle  $\calC((0,0), 1- \eps ) $,
and $ |\Gamma_\epsilon^0(\alpha) - (0,0)| \geq  1 $.
Since the line $L$ does not have to pass through $(0,0)$, we can use only one half of the estimate in
Lemma \ref{remain} to conclude that for a light ray starting from $\Gamma_\epsilon^0(\alpha)$ with the Lambertian direction, the probability of avoiding of $\calC((0,0), 1- \eps ) $ is bounded below by $\eps/2$. Hence, the probability of avoiding $\mathcal{U}_\epsilon^1$ is also bounded below by $\eps/2$. This proves the lower bound.
\end{proof}
\begin{lemma}\label{EstimT} The following assertions hold uniformly in $\alpha\in[0,2\pi)$,
\begin{align}\label{MeanT}
\lim_{\eps\rightarrow 0}\frac{\mathbb{E}\big(T_1^\eps(\alpha)\big)}{(\epsilon^{2}/2)\log(1/\eps)}&= h'(\alpha)h(\alpha),\\
\label{VarT}
\lim_{\eps\rightarrow 0}\frac{\var(T_1^\eps(\alpha))}{(\epsilon^2/2)\log(1/\eps)}&=h^2(\alpha).
\end{align}
\end{lemma}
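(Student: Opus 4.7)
The plan is to extend the calculation of Lemma~\ref{variance2} to the perturbed geometry and then to extract the drift, which is a genuinely second-order effect absent from the symmetric annulus. Fix $\alpha\in[0,2\pi)$. The light ray starts from $\Gamma_\eps^1(\alpha)$, and by Lemma~\ref{angleecart} the inner normal is tilted by $\gamma_\eps^1(\alpha)=O(\eps)$ from the outward radial direction. Introducing $\phi=\Theta+\gamma_\eps^1(\alpha)$, the angle of the outgoing ray measured from that radial vector, converts the Lambertian density to $\tfrac12\cos(\phi-\gamma_\eps^1(\alpha))$ on a shifted interval; by \bhth{} this amounts to an $O(\eps f'(\alpha))$ shift of the standard Lambertian density.

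First I would solve for the intersection with $\Gamma_\eps^0$ geometrically. Starting from $(1-f_\eps(\alpha))e^{i\alpha}$ and shooting in direction $e^{i(\alpha+\phi)}$, finding the endpoint on $\Gamma_\eps^0(\alpha')$ becomes a quadratic equation analogous to the one producing \eqref{xreps}, now with $\eps$ replaced by the local width $f_\eps(\alpha)+g_\eps(\alpha')$. Using \bho--\bhf{} (so that $(f_\eps+g_\eps)/\eps\to h$ together with its first two derivatives) and the uniform bound \bhfi{} on third derivatives, I expect an expansion
\begin{equation}\label{planT}
T_1^\eps(\alpha)=\eps\, h(\alpha)\tan\phi+\eps^2\left[\tfrac12 h'(\alpha)h(\alpha)\tan^2\phi+r_\eps(\alpha,\phi)\right]+\rho_\eps(\alpha,\phi),
\end{equation}
valid uniformly in $\alpha$ on the bulk range $|\phi|\le\pi/2-C\sqrt\eps$, with $r_\eps$ bounded and $\rho_\eps$ a higher-order remainder. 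The $\tan^2\phi$ coefficient appears because over an angular traversal $\approx \eps h(\alpha)\tan\phi$ the width changes by $\eps h'(\alpha)$ times that span, feeding a correction into the square-root exactly as $(2-\eps)\eps\tan^2\theta$ does in \eqref{o15.4}.

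Integrating \eqref{planT} against the shifted Lambertian density gives the mean. The leading term $\eps h(\alpha)\tan\phi$ integrates to zero against $\tfrac12\cos\phi\,d\phi$ by symmetry, so the drift arises from two second-order effects: the cross term between $\eps h(\alpha)\tan\phi$ and the $\gamma_\eps^1(\alpha)\sin\phi$ linearization of the shifted density (carrying the $f'$ part), and the $\eps^2 h'(\alpha)h(\alpha)\tan^2\phi$ term in \eqref{planT} itself (carrying the $g'$ part, along with the remaining $f'$ part). Each produces a logarithmic divergence as $|\phi|\to\pi/2$ truncated at $\theta_0$ as in \eqref{o17.7}, and the same evaluation as in \eqref{d17.6}--\eqref{d17.7} extracts an $(\eps^2/2)\log(1/\eps)$ factor from each. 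Adding them and using $f'+g'=h'$ should yield $\E[T_1^\eps(\alpha)]\sim(\eps^2/2)\log(1/\eps)\,h'(\alpha)h(\alpha)$. The variance is easier: $(\E T_1^\eps)^2$ is $O(\eps^4\log^2(1/\eps))=o(\eps^2\log(1/\eps))$, and the leading term $\eps^2 h(\alpha)^2\,\E[\tan^2\phi]$ of $\E[(T_1^\eps)^2]$, truncated near $\pm\pi/2$, reproduces $(\eps^2/2)\log(1/\eps)\,h^2(\alpha)$ via the very same logarithmic integral used in Lemma~\ref{variance2}.

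Uniformity in $\alpha$ follows because the convergences \bho--\bhf{} are uniform and \bhfi{} controls $f_\eps'''$, $g_\eps'''$ uniformly, so the Taylor remainders in \eqref{planT} are uniform in $\alpha$. The main obstacle I anticipate is precisely the bookkeeping in the drift calculation: the $f'$ and $g'$ contributions arise from geometrically distinct mechanisms (tilted starting normal versus varying target radius) and a priori carry different prefactors, so verifying that they recombine cleanly into the single coefficient $h'(\alpha)h(\alpha)$, with the correct constant in front of $(\eps^2/2)\log(1/\eps)$ and with the auxiliary $r_\eps$ terms contributing only $O(\eps^2)=o(\eps^2\log(1/\eps))$, is the most delicate step.
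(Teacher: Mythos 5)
Your proposed expansion of $T_1^\eps(\alpha)$ has the right structure --- leading term $\eps h(\alpha)\tan\phi$, a quadratic correction, a shifted density --- but an incorrect second-order coefficient, and since you yourself flag the drift bookkeeping as the step you could not verify, the gap is real. The coefficient of $\tan^2\phi$ should be $\eps^2 g'(\alpha)h(\alpha)$, not $\eps^2\tfrac12 h'(\alpha)h(\alpha)$. Over the angular traversal $T\approx\eps h(\alpha)\tan\phi$ only the \emph{outer} radius $1+g_\eps(\cdot)$ moves with the endpoint; the starting radius $1-f_\eps(\alpha)$ is fixed because $\alpha$ is fixed. So the local width changes by $g_\eps'\cdot T\approx\eps^2 g'(\alpha)h(\alpha)\tan\phi$, not by ``$\eps h'$ times that span'' as you wrote, and feeding this back into $T\approx(\text{width})\tan\phi$ gives the term $\eps^2 g'h\tan^2\phi$. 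With this correction your split closes: the cross-term of $\eps h\tan\phi$ with the $\gamma_\eps^1(\alpha)\sin\phi$ piece of the shifted density, where $\gamma_\eps^1\approx\eps f'(\alpha)$ by \eqref{d4.2}, yields $\tfrac{\eps^2}{2}\log(1/\eps)\,f'(\alpha)h(\alpha)$; the $\tan^2\phi$ term against $\tfrac12\cos\phi$ yields $\tfrac{\eps^2}{2}\log(1/\eps)\,g'(\alpha)h(\alpha)$ (both via the truncated $\int\sin^2\phi/\cos\phi\,d\phi\sim\log(1/\eps)$, as in \eqref{d17.6}--\eqref{d17.7}); and their sum is $\tfrac{\eps^2}{2}\log(1/\eps)\,h'(\alpha)h(\alpha)$. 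With the coefficient as you wrote it the two contributions do \emph{not} reassemble to $h'h$ for general $f,g$, and the bounded remainder $r_\eps$ only produces $O(\eps^2)=o(\eps^2\log(1/\eps))$ and cannot repair it.

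Modulo that correction, your route is a genuinely different organization from the paper's. The paper locates $\alpha'$ with $|\pi/2-\alpha'|=O(\eps^2)$, rotates about $\Gamma^1_\eps(\pi/2)$ by $\rho\approx g_\eps'$ so the osculating circle of the rotated outer curve has its apex on the vertical axis, and observes that the resulting intersection map $\wh T$ is then \emph{exactly} odd; the entire drift lives in the single density shift $\rho_1=\rho+\gamma_\eps^1\approx\eps h'$ (\eqref{d17.10}, \eqref{d15.2}), so the $f'+g'=h'$ recombination is automatic and the variance reduces almost verbatim to Lemma~\ref{variance2} with $\eps$ replaced by $h_\eps$. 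Your approach keeps the original frame, so the drift necessarily splits into an $f'$ piece (tilted density) plus a $g'$ piece (second-order term in $T$), and $h'=f'+g'$ must be invoked at the end. You thereby avoid the rotation and the osculating-circle error bounds \eqref{d18.5}, \eqref{d20.1}, but inherit exactly the delicate coefficient arithmetic you identify as the crux --- and, as written, get it wrong.
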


\begin{proof} 
(i)
We will prove the lemma for $\alpha = \pi/2$. This will cause no loss of generality because the constants  in our estimates do not dependent on $\alpha$.

Let $L(\alpha)$ be the straight line passing through $\Gamma^0_\eps(\alpha)$ in the direction of the normal vector to $\Gamma^0_\eps$ at $\Gamma^0_\eps(\alpha)$, for $\alpha \in [0,2\pi)$.

It follows from \bhth{} that for some $c_1<\infty$ and all $\eps\in(0,1/2)$, we have $\|g'_\eps\|_\infty < c_1 \eps$.
Let $c_ 2 = 30  c_1$ and 
$\alpha_1 = \pi/2 - c_2 \eps^2$.
The unsigned angle  between $L(\alpha_1)$ and the vertical axis
is $\rho_0:=\pi/2-\alpha_1 + \arctan\big(\frac{g'_\eps(\alpha_1)}{1+g_\eps(\alpha_1)}\big)$. For small $\eps>0$, $\rho_0\leq 2 c_1 \eps $, so $\tan \rho_0 \leq 3 c_1 \eps$. 
It is possible that $L(\alpha_1)$ does not cross the vertical axis below $\Gamma^0_\eps(\alpha_1)$. Suppose that it does and denote by $(0,u_1)$ the intersection point with the vertical axis. Then
\begin{align*}
\big(1+g_\eps(\alpha_1)\big)\cos(c_2\eps^2)-u_1&=\frac{\big(1+g_\eps(\alpha_1)\big)\sin(c_2\eps^2)}{\tan(\rho_0)}.
\end{align*}
Therefore, by \bho{}, for small $\eps>0$,
\begin{align*}
1-u_1&=\frac{\big(1+g_\eps(\alpha_1)\big)\sin(c_2\eps^2)}{\tan(\rho_0)}-g_\eps(\alpha_1)\cos(c_2\eps^2)+1-\cos(c_2\eps^2)\\
&\geq \frac{c_2\eps^2/2}{\tan(\rho_0)}-\Vert g_\eps\Vert_\infty
\geq \left(\frac{c_2}{6c_1}-2\right)\eps =3\eps.
\end{align*}
Since $f_\eps(\pi/2) \leq 2 \eps$,  $L(\alpha_1)$ crosses the vertical axis 
below $\Gamma^1_\eps(\pi/2)$, and stays to the right of $\Gamma^1_\eps(\pi/2)$ above $(0, u_1)$.
We have shown that no matter whether $L(\alpha_1)$  crosses the vertical axis below $\Gamma^0_\eps(\alpha_1)$ or not, it stays to the right of $\Gamma^1_\eps(\pi/2)$.

An analogous argument shows that if $\alpha_2 = \pi/2 + c_2 \eps^2$  then $L(\alpha_2)$ stays to the left of $\Gamma^1_\eps(\pi/2)$.
Since $g_\eps$ is $C^3$, 
when $\alpha$ varies continuously from $\pi/2  - c_2\eps^2$ to $ \pi/2 + c_2\eps^2$, we must encounter $\alpha'$ such that $L(\alpha')$ passes through  $\Gamma^1_\eps(\pi/2)$.
We have 
\begin{align}\label{d15.0}
|\pi/2 - \alpha'| = O( \eps^2).
\end{align}
Let $\calR$ be the rotation about $\Gamma^1_\eps(\pi/2)$, with the angle of rotation $\rho$ chosen so that $\calR(\Gamma^0_\eps(\alpha'))=(0, z)$ with $z\geq 1$. 
We will estimate $\rho$. The angle between $L(\alpha')$ and the line segment between $0$ and $e^{i\alpha'}$ is equal to $\gamma^0_\eps(\alpha')$, by definition. Hence, the angle between $L(\alpha')$ and the vertical line (i.e., $\rho$) is
$\gamma^0_\eps(\alpha')+ (\pi/2-\alpha')$. Recall that $\|g'_\eps\|_\infty < c_1 \eps$ and a similar bound holds for $g_\eps$. This, \eqref{d4.1}, \bho-\bhf, and the Taylor expansion imply that
\begin{align}\label{d15.1}
\rho &= \gamma^0_\eps(\alpha')+ (\pi/2-\alpha' )
= g'_\eps(\alpha')(1 + O(\eps)) + O(\eps^2)
= g'_\eps(\alpha') + O(\eps^2)\\
&= g'_\eps(\pi/2)(1+ O(\eps(\pi/2-\alpha' ))) + O (\eps^2)
= g'_\eps(\pi/2) + O (\eps^2) = O(\eps). \nonumber
\end{align}

Let $\calC$ be the osculating circle of $\calR(\Gamma^0_\eps)$ at $(0,z)$.
We have chosen $\rho$ so that the topmost point of the circle is on the vertical axis. The radius of $\calC$ is $1/ |\kappa^0_\eps(\alpha')|$ (see \eqref{curvature} for a formula for the curvature $\kappa^0_\eps$), so the center of $\calC$ is at $(0,z_1) :=(0, z - 1/ |\kappa^0_\eps(\alpha')|)$.
We will now estimate $z$ and $z_1$. The definition of $z$ and the formula for the image of $\Gamma^0_\eps(\alpha') = ((1+g_\eps(\alpha'))\cos(\alpha'), (1+g_\eps(\alpha'))\sin(\alpha'))$  under rotation about $(0,1-f_\eps(\pi/2))$  by angle $\rho$ yield
\begin{align*}
z&= (1-f_\eps(\pi/2))(1-\cos(\rho))+\sin(\rho)(1+g_\eps(\alpha'))\cos(\alpha')+\cos(\rho)(1+g_\eps(\alpha'))\sin(\alpha')\\
&=(1-f_\eps(\pi/2))(1-\cos(\rho))
+ (1+g_\eps(\alpha'))\sin( \rho+\alpha') .
\end{align*} 
This, \bho{}, \bhth{}, \eqref{d15.0} and \eqref{d15.1} show that 
\begin{align}
|z-(&1+g_\eps(\pi/2))|
= \left|(1-f_\eps(\pi/2))(1-\cos(\rho))
+ (1+g_\eps(\alpha'))\sin(\rho + \alpha')
-(1+g_\eps(\pi/2))\right| \notag \\
&=|(1-f_\eps(\pi/2))(1-\cos(\rho))
+ (1+g_\eps(\alpha'))
+ (1+g_\eps(\alpha'))(\sin(\rho + \alpha')-1) \notag \\
&\quad-(1+g_\eps(\pi/2))| \notag \\
&=\left|(1-f_\eps(\pi/2))(1-\cos(\rho))
+ (g_\eps(\alpha')-g_\eps(\pi/2))
+ (1+g_\eps(\alpha'))(\sin(\rho + \alpha')-1)\right| \notag \\
&=\left|(1-f_\eps(\pi/2))O(\rho^2)
+ (g_\eps(\alpha')-g_\eps(\pi/2))
+ (1+g_\eps(\alpha'))O((\pi/2- \alpha'-\rho)^2)\right| \notag \\
&= O(\eps^2) + O(\eps^2) + O(\eps^2) = O(\eps^2).\label{d17.1}
\end{align}
 Assumptions \bho-\bhf, \eqref{curvature} and the Taylor expansion imply that $|\kappa^0_\eps(\alpha)| = 1 + O(\eps)$ uniformly in $\alpha$. We combine this with \bht{} and \eqref{d17.1} to see that
\begin{align}\label{d17.2}
|z_1| =|z - 1/ |\kappa^0_\eps(\alpha')|| = O(\eps).
\end{align}

Suppose that a light ray leaves $\Gamma^1_\eps(\pi/2)$ at an angle $\theta$, relative to the normal vector to $\Gamma^1_\eps$ at $\Gamma^1_\eps(\pi/2)$. The light ray will intersect $\calR(\Gamma^0_\eps)$ at a point that we will denote $r'(\theta) \exp(i (\pi/2+ T'(\theta)))$. In other words, $T'(\theta)$ denotes the angular distance between $\Gamma^1_\eps(\pi/2)$ and the intersection of the light ray with $\calR(\Gamma^0_\eps)$. 
The same light ray will intersect the circle $\calC$ at a point $\wh r(\theta) \exp(i(\pi/2+ \wh T(\theta)))$.

For later reference, we record the following estimates valid for all $\theta$. They follow from an argument similar to the one used in the proof of Lemma \ref{maxDep}.
\begin{align}\label{d18.1}
|T^\eps_1| = O(\eps^{1/2}),
\qquad |T'(\theta)| = O(\eps^{1/2}),
\qquad |\wh T(\theta)| = O(\eps^{1/2}).
\end{align}

If we recall the notation from \eqref{LambertianDistrib} and write $T^\eps_1(\pi/2, \theta) = T^\eps_1(\pi/2)$ to emphasize the dependence on $\theta$, then
\begin{align}\label{d7.1}
T'(\theta+\rho) = T^\eps_1(\pi/2, \theta ).
\end{align}

The curvature of $\calC$ matches that of $\calR(\Gamma^0_\eps)$ at $(0,z)$, by the definition of the osculating circle. Hence, if $v\in \calC$ and $\dist(v, (0,z)) = b< 10 \sqrt{\eps} $  then for some $c_1<\infty$ (not depending on our choice of $\bal^\eps_0=\pi/2$, in view of \bhfi), the distance from $v$ to $\calR(\Gamma^0_\eps)$ is less than $c_1 b^3$.
This and an elementary analysis of the triangle with vertices $\wh T(\theta) $, $T'(\theta)$ and $\Gamma^0_\eps(\wh T(\theta))$ shows that 
\begin{align}\label{d18.5}
\left| \wh T(\theta) - T'(\theta)\right| = O\left( \wh T(\theta)^3 \tan \theta \right).
\end{align}
We will need a stronger version of this estimate for $\theta \leq -\pi/2 +c_3\eps^{1/2}$ and $\theta \geq \pi/2 -c_3\eps^{1/2}$.
If $\theta$ is in this range, $|\wh T(\theta)|\geq c_4 \eps^{1/2}$. It follows that, for some $c_5>0$, the slope of the osculating circle $\calC$ at $\wh r(\theta) \exp(i(\pi/2+ \wh T(\theta)))$, considered to be the graph of a function in the usual coordinate system, is greater than $c_5 \eps^{1/2}$ for $\theta \leq -\pi/2 +c_3\eps^{1/2}$ and smaller than $-c_5 \eps^{1/2}$ for $\theta \geq \pi/2 -c_3\eps^{1/2}$. The same remark applies to the slope of  $\calR(\Gamma^0_\eps)$ at $r'(\theta) \exp(i (\pi/2+ T'(\theta)))$. Hence, for $\theta \leq -\pi/2 +c_3\eps^{1/2}$ and $\theta \geq \pi/2 -c_3\eps^{1/2}$,
\begin{align}\label{d20.1}
\left| \wh T(\theta) - T'(\theta)\right| = O\left( \wh T(\theta)^3 \eps^{-1/2} \right).
\end{align}

We will write 
\begin{align}\label{d26.1}
(x,y) = (x(\theta), y(\theta)) =\wh r(\theta) \exp(i \wh T(\theta))
\end{align}
and we will find a formula for $x$ in terms of $\theta$, $f_\eps$ and $g_\eps$. If we let $a = 1/\tan \theta$ then $y(\theta) = a x(\theta) + 1 - f_\eps(\pi/2)$. Since $(x,y) \in \calC((0,z_1) ,1/ |\kappa^0_\eps(\alpha')|)$,
\begin{align*}
x^2 + y^2 
= x^2 + ( a x + 1 - f_\eps(\pi/2)-z_1)^2
= \kappa^0_\eps(\alpha')^{-2}.
\end{align*}
This and $a = 1/\tan \theta$ yield for $a>0$,
\begin{align}\label{d7.2}
x&(\theta)= 
\frac{-a(1-f_\eps(\pi/2)-z_1)+\sqrt{(a^2+1)\kappa^0_\eps(\alpha')^{-2}-(1-f_\eps(\pi/2)-z_1)^2}}{1+a^2} \\
&= \sin \theta \cos \theta
(f_\eps(\pi/2)+z_1 - 1) \nonumber \\
&\qquad+ 
\sin \theta \cos \theta
\sqrt{\kappa^0_\eps(\alpha')^{-2}+ \tan^2\theta
\left(\kappa^0_\eps(\alpha')^{-2}-(1-f_\eps(\pi/2)-z_1)^2\right)}
.\nonumber
\end{align}
We have 
\begin{align}\label{d15.4}
\wh T(\theta) = \arctan\left( x(\theta)/y(\theta)\right)
= \arctan\left( \frac{x(\theta)}{ x(\theta)/\tan \theta + 1 - f_\eps(\pi/2)} \right).
\end{align}

The density of  the angle of reflection given  in \eqref{LambertianDistrib} is relative to the normal vector at the boundary of the domain, 
which is tilted by $\gamma_\eps^1(\pi/2)$ relative to the vertical
if the reflection takes place at $\Gamma^1_\eps(\pi/2)$,
so
\begin{align}\label{d18.6}
\E  \wh T(\Theta+\rho) = \int_{-\pi/2}^{\pi/2} 
\wh T\left(\theta+\rho+ \gamma_\eps^1(\pi/2)\right) \frac 12 \cos \theta d\theta.
\end{align}
Let $\rho_1 = \rho+ \gamma_\eps^1(\pi/2)$. We will assume that $\rho_1 \geq 0$. The argument is analogous in the opposite case.
We have
\begin{align}\label{d18.2}
\E  \wh T(\Theta+\rho) 
&= \int_{-\pi/2}^{\pi/2} 
\wh T\left(\theta+\rho_1\right) \frac 12 \cos \theta d\theta\\
&= \int_{-\pi/2}^{\pi/2-2\rho_1} 
\wh T\left(\theta+\rho_1\right) \frac 12 \cos \theta d\theta
+ \int_{\pi/2-2\rho_1}^{\pi/2} 
\wh T\left(\theta+\rho_1\right) \frac 12 \cos \theta d\theta.\nonumber
\end{align}
We will estimate the two integrals separately.
We start with the first integral.
\begin{align}\label{d19.4}
\int_{-\pi/2}^{\pi/2-2\rho_1} 
\wh T\left(\theta+\rho_1\right) \frac 12 \cos \theta d\theta
= \int_{-\pi/2+\rho_1}^{\pi/2-\rho_1} 
\wh T\left(\theta\right) \frac 12 \cos (\theta-\rho_1) d\theta.
\end{align}
Recall that $\cos (\theta-\rho_1) = \cos \theta \cos \rho_1 + \sin \theta\sin\rho_1$. Note that $\theta \to \wh T\left(\theta\right)$ is an odd function. Thus
\begin{align}\label{d15.2}
\int_{-\pi/2}^{\pi/2-2\rho_1} 
\wh T\left(\theta+\rho_1\right) \frac 12 \cos \theta d\theta
&= \frac 12\sin\rho_1 \int_{-\pi/2+\rho_1}^{\pi/2-\rho_1} 
\wh T\left(\theta\right)  \sin \theta d\theta\\
&= \sin\rho_1 \int_{0}^{\pi/2-\rho_1} 
\wh T\left(\theta\right)  \sin \theta d\theta.\nonumber
\end{align}
Once again, we will analyze the factors on the right hand side separately.
First, by \eqref{d15.1} and Lemma \ref{angleecart},
\begin{align}\label{d17.10}
\lim_{\eps\to 0} \frac 1 \eps \sin\rho_1
&= \lim_{\eps\to 0} \frac 1 \eps \sin(\rho+ \gamma_\eps^1(\pi/2))
= \lim_{\eps\to 0} \frac 1 \eps \sin
\left(\gamma^0_\eps(\pi/2) + O (\eps^2)+ \gamma_\eps^1(\pi/2)\right)\\
&= \lim_{\eps\to 0} \frac 1 \eps \sin
\left(\gamma^0_\eps(\pi/2) + \gamma_\eps^1(\pi/2)\right) = h'(\pi/2).
\nonumber
\end{align}
Next we tackle the integral on the right hand side of \eqref{d15.2}.
It is easy to see the $y(\theta)$ converges to 1 and $x(\theta)$ converges to 0, both uniformly in $\theta$, as $\eps\to0$. This observation and \eqref{d15.4} imply that 
\begin{align}\label{d18.10}
\lim_{\eps\to 0} \wh T(\theta)/x(\theta) = 1,
\end{align}
 uniformly in $\theta$, so
\begin{align}\label{d15.3}
\lim_{\eps\to0} 
\frac 1 {\eps \log(1/\eps)}
\int_{0}^{\pi/2-\rho_1} 
\wh T\left(\theta\right)  \sin \theta d\theta
= \lim_{\eps\to0} 
\frac 1 {\eps \log(1/\eps)}
\int_{0}^{\pi/2-\rho_1} 
x(\theta)  \sin \theta d\theta,
\end{align}
assuming that at least one of these limits exists.

In order to estimate the integral on the right hand side of \eqref{d15.3}, we will split the interval of integration into two parts.
Set $h_\eps =f_\eps +g_\eps$ and
\begin{equation*}
\theta_0=\frac{\pi}{2}-\sqrt{\eps}.
\end{equation*}

An easy argument, similar to the one  in the the proof of Lemma \ref{Support}, shows that for some $c_3$, all $\eps\in(0,1/2)$ and all $\theta$, $|x(\theta)|  \leq c_3 \sqrt{\eps}$. Hence
\begin{align*}%\label{d16.2}
\left|\int_{\theta_0}^{\pi/2-\rho_1} 
x(\theta)  \sin \theta d\theta\right|
\leq \int_{\theta_0}^{\pi/2} 
|x(\theta)|   d\theta
\leq  c_3 \sqrt{\eps}  (\pi/2 - \theta_0)
= O(\eps),
\end{align*}
and, therefore,
\begin{align}\label{d16.2}
\lim_{\eps\to0} 
\frac 1 {\eps \log(1/\eps)}
\left|\int_{\theta_0}^{\pi/2-\rho_1} 
x(\theta)  \sin \theta d\theta\right|
=0.
\end{align}

Recall from \eqref{d7.2} that
\begin{align}\label{d16.1}
x(\theta)
&= \sin \theta \cos \theta
(f_\eps(\pi/2)+z_1 - 1)  \\
&\qquad+ 
\sin \theta \cos \theta
\sqrt{\kappa^0_\eps(\alpha')^{-2}+ \tan^2\theta
\left(\kappa^0_\eps(\alpha')^{-2}-(1-f_\eps(\pi/2)-z_1)^2\right)}
.\nonumber
\end{align}
It follows from \eqref{d17.1} that
\begin{align}\label{d23.1}
|\kappa^0_\eps(\alpha')|^{-1} = z - z_1
= 1+ g_\eps(\pi/2) - z_1 + O(\eps^2),
\end{align}
so
\begin{align}\label{d22.1}
\kappa^0_\eps(\alpha')^{-2} 
= (1+ g_\eps(\pi/2) - z_1)^2 + O(\eps^2).
\end{align}
This implies the following representation for the expression under square root in \eqref{d16.1},
\begin{align}\label{d17.3}
&\sqrt{\kappa^0_\eps(\alpha')^{-2}+ \tan^2\theta
\left(\kappa^0_\eps(\alpha')^{-2}-(1-f_\eps(\pi/2)-z_1)^2\right)}\\
&=
\Big((1+ g_\eps(\pi/2) - z_1)^2 + O(\eps^2)\nonumber \\
&\qquad+ \tan^2\theta
\left((1+ g_\eps(\pi/2) - z_1)^2 + O(\eps^2)-(1-f_\eps(\pi/2)-z_1)^2\right)\Big)^{1/2}.\nonumber
\end{align}
It follows from \bht{} and \eqref{d17.2} that
\begin{align}\label{d17.4}
(&1+ g_\eps(\pi/2) - z_1)^2 + O(\eps^2)-(1-f_\eps(\pi/2)-z_1)^2 \\
&= g_\eps(\pi/2) ^2 -f_\eps(\pi/2)^2 
+ 2 (1- z_1) (g_\eps(\pi/2)  +f_\eps(\pi/2)) + O(\eps^2) \nonumber\\
&= 2 (1- z_1) h_\eps(\pi/2)  + O(\eps^2)
%= 2 (1+ g_\eps(\pi/2)- z_1) h_\eps(\pi/2)  + O(\eps^2)
= O(\eps).\label{d17.5}
\end{align}
Since $\tan\theta = O(\eps^{1/2})$ for $0 \leq \theta \leq\theta_0$,  the above estimate implies that
\begin{align*}
\tan^2\theta
\left((1+ g_\eps(\pi/2) - z_1)^2 + O(\eps^2)-(1-f_\eps(\pi/2)-z_1)^2\right)
=O(\eps^2).
\end{align*}
This and \eqref{d17.4}-\eqref{d17.5} imply that we can apply the Taylor expansion to the right hand side of \eqref{d17.3} as follows,
\begin{align*}%\label{d17.3}
&\Big((1+ g_\eps(\pi/2) - z_1)^2 + O(\eps^2)\\
&\qquad+ \tan^2\theta
\left((1+ g_\eps(\pi/2) - z_1)^2 + O(\eps^2)-(1-f_\eps(\pi/2)-z_1)^2\right)\Big)^{1/2}\\
& = 1+ g_\eps(\pi/2) - z_1
+ \frac {\tan^2\theta \left( 2 (1+ g_\eps(\pi/2)- z_1) h_\eps(\pi/2)  + O(\eps^2)\right)} {2(1+ g_\eps(\pi/2) - z_1)}
+ O(\tan^4\theta \ h_\eps(\pi/2)^2)\\
& = 1+ g_\eps(\pi/2) - z_1
+ \tan^2\theta\  h_\eps(\pi/2)
+ \tan^2\theta\  O(\eps^2)
+ O(\tan^4\theta \ h_\eps(\pi/2)^2)\\
& = 1+ g_\eps(\pi/2) - z_1
+ \tan^2\theta\  h_\eps(\pi/2)
+ (\tan^2\theta+\tan^4\theta)  O(\eps^2).
\end{align*}
We combine this with \eqref{d16.1}, \eqref{d17.3} and \eqref{d17.5} to obtain
\begin{align}\label{d18.9}
x(\theta) 
 = \sin\theta \cos\theta\left(h_\eps(\pi/2) (1 + \tan^2\theta)
+ (\tan^2\theta+\tan^4\theta)  O(\eps^2)\right).
\end{align}
Hence,
\begin{align}\nonumber
\int_0^{\theta_0} x(\theta) \sin\theta d\theta
&=
\int_0^{\theta_0} \sin\theta \cos\theta\left(h_\eps(\pi/2) (1 + \tan^2\theta)
+ (\tan^2\theta+\tan^4\theta)  O(\eps^2)\right) \sin\theta d\theta\\
&= h_\eps(\pi/2) \int_0^{\theta_0} \frac{\sin^2\theta}{\cos\theta} d\theta
+ O(\eps^2) \int_0^{\theta_0} \left( \frac{\sin^4\theta}{\cos\theta} 
+ \frac{\sin^6\theta}{\cos^3\theta} \right)  d\theta.\label{d17.8}
\end{align}
We use \eqref{d17.6}-\eqref{d17.7} in the following calculation,
\begin{align*}
 h_\eps&(\pi/2) \int_0^{\theta_0} \frac{\sin^2\theta}{\cos\theta} d\theta
= - \frac{h_\eps(\pi/2)}{2} \log\left(1-\cos (\pi/2-\theta_0)\right)\\
&= - \frac{h_\eps(\pi/2)}{2} \log\left(1-\cos \left(\sqrt{\eps}\right)\right)
=  \frac{h_\eps(\pi/2)}{2} \log\left(\frac{1+o(1)}{\eps}\right).
\end{align*}
Thus, in view of \bht,
\begin{align}\label{d17.9}
\lim_{\eps\to0}\
&\frac 1 {\eps \log (1/\eps)}
 h_\eps(\pi/2) \int_0^{\theta_0} \frac{\sin^2\theta}{\cos\theta} d\theta
= 
\lim_{\eps\to0}\
\frac 1 {\eps \log (1/\eps)}
 \frac{h_\eps(\pi/2)}{2} \log\left(\frac{1+o(1)}{\eps}\right)\\
&=\lim_{\eps\to0}
\frac 1 2
\frac {h_\eps(\pi/2)} {\eps }
= h(\pi/2)/2.\nonumber
\end{align}
We use \eqref{o17.5} and \bht{} as follows,
\begin{align*}
O(\eps^2) \int_0^{\theta_0} \left( \frac{\sin^4\theta}{\cos\theta} 
+ \frac{\sin^6\theta}{\cos^3\theta} \right)  d\theta
\leq O(\eps^2) 2
\int_0^{\theta_0} \frac{1}{\cos^3\theta}  d\theta
\leq O(\eps^2) 2  \frac { \pi (\pi - \theta_0) \theta_0}{(\pi - 2\theta_0)^2}
= O(\eps),
\end{align*}
from which we conclude that
\begin{align*}
\lim_{\eps\to0}
&\frac 1 {\eps \log (1/\eps)}
O(\eps^2) \int_0^{\theta_0} \left( \frac{\sin^4\theta}{\cos\theta} 
+ \frac{\sin^6\theta}{\cos^3\theta} \right)  d\theta
=0.
\end{align*}
We combine this with \eqref{d17.8} and \eqref{d17.9}
to conclude that
\begin{align*}
\lim_{\eps\to0}
&\frac 1 {\eps \log (1/\eps)}
\int_0^{\theta_0} x(\theta) \sin\theta d\theta
=h(\pi/2)/2.
\end{align*}
Thus, in view of \eqref{d15.3} and \eqref{d16.2},
\begin{align*}%\label{d15.3}
\lim_{\eps\to0} 
\frac 1 {\eps \log(1/\eps)}
\int_{0}^{\pi/2-\rho_1} 
\wh T\left(\theta\right)  \sin \theta d\theta
=h(\pi/2)/2.
\end{align*}
Combining the formula with \eqref{d15.2} and \eqref{d17.10} yields
\begin{align}\label{d18.3}
\lim_{\eps\to0} 
\frac 1 {\eps^2 \log(1/\eps)}
\int_{-\pi/2}^{\pi/2-2\rho_1} 
\wh T\left(\theta+\rho_1\right) \frac 12 \cos \theta d\theta
&= h'(\pi/2)h(\pi/2)/2.
\end{align}

We next estimate the second integral on the right hand side of \eqref{d18.2}. Recall that  $\rho_1 = \rho+ \gamma_\eps^1(\pi/2)$. We have 
\begin{align}\label{d19.3}
|\rho_1| = O(\eps)
\end{align}
in view of \eqref{d4.2} and \eqref{d15.1}.
Hence \eqref{d18.1} gives
\begin{align*}%\label{d18.2}
 \int_{\pi/2-2\rho_1}^{\pi/2} 
\wh T\left(\theta+\rho_1\right) \frac 12 \cos \theta d\theta
=
O(\eps^{1/2}) \int_{\pi/2-2\rho_1}^{\pi/2} 
 \cos \theta d\theta
= O(\eps^{1/2}) O(\rho_1^2) = O(\eps^{5/2}).
\end{align*}
This, \eqref{d18.2} and \eqref{d18.3} yield
\begin{align}\label{d18.4}
\lim_{\eps\to0} 
\frac {\E \wh T(\Theta+\rho)} {\eps^2 \log(1/\eps)}
=
\lim_{\eps\to0} 
\frac 1 {\eps^2 \log(1/\eps)}
\int_{-\pi/2}^{\pi/2} 
\wh T\left(\theta+\rho_1\right) \frac 12 \cos \theta d\theta
&= h'(\pi/2)h(\pi/2)/2.
\end{align}

We will now estimate $\E\left| \wh T(\theta+\rho) - T^\eps_1(\pi/2, \theta )\right|$. By \eqref{d7.1}, \eqref{d18.5} and \eqref{d20.1}, for some $c_4$,
\begin{align}\label{d20.3}
\E& \left| \wh T(\Theta+\rho) - T^\eps_1(\pi/2, \Theta )\right|
= \E \left| \wh T(\Theta+\rho) - T'(\Theta+\rho)\right|\\
&\leq c_4
\E \left(\left| \wh T(\Theta+\rho)^3 \tan (\Theta+\rho) \right|
\bone_{(-\pi/2+\eps^{1/2} , \pi/2 -\eps^{1/2} -2\rho_1)}(\Theta) \right)
\notag\\
&\quad + c_4
\E \left(\left| \wh T(\Theta+\rho) ^3 \eps^{-1/2}\right|
\bone_{(-\pi/2 ,-\pi/2+\eps^{1/2} )\cup( \pi/2 -\eps^{1/2} -2\rho_1,\pi/2)}(\Theta) \right).\notag
\end{align}
By \eqref{d18.1},
\begin{align}\label{d20.2}
\E &\left(\left| \wh T(\Theta+\rho) ^3 \eps^{-1/2}\right|
\bone_{(-\pi/2 ,-\pi/2+\eps^{1/2} )\cup( \pi/2 -\eps^{1/2} -2\rho_1,\pi/2)}(\Theta) \right)\\
&\leq O(\eps) 
\left( \int_{-\pi/2 }^{-\pi/2+\eps^{1/2} }
+\int_{ \pi/2 -\eps^{1/2} -2\rho_1}^{\pi/2}
\right) \frac 12 \cos \theta d\theta
= O(\eps^{2}).\notag
\end{align}

We calculate as in \eqref{d18.2} and \eqref{d19.4},  use the fact that $\theta \to \left| \wh T(\theta)^3 \tan (\theta) \right|$ is even,
and then apply \eqref{d18.10},
\begin{align}\label{d19.5}
\E& \left(\left| \wh T(\Theta+\rho)^3 \tan (\Theta+\rho) \right|
\bone_{(-\pi/2+\eps^{1/2} , \pi/2 -\eps^{1/2} -2\rho_1)}(\Theta) \right)\\
&= \int_{-\pi/2+\eps^{1/2}}^{\pi/2-\eps^{1/2}-2\rho_1} 
\left| \wh T(\theta+\rho_1)^3 \tan (\theta+\rho_1) \right|  \frac 12 \cos \theta d\theta \notag \\
&= \int_{-\pi/2+\eps^{1/2}+\rho_1}^{\pi/2-\eps^{1/2}-\rho_1} 
\left| \wh T(\theta)^3 \tan (\theta) \right|  \frac 12 \cos (\theta-\rho_1) d\theta \notag \\
& = 
\int_{-\pi/2+\eps^{1/2}+\rho_1}^{\pi/2-\eps^{1/2}-\rho_1}
\left| \wh T(\theta)^3 \tan (\theta) \right|  \frac 12 
( \cos \theta \cos \rho_1 + \sin \theta\sin\rho_1) d\theta \notag \\
& \leq \int_{-\pi/2+\eps^{1/2}+\rho_1}^{\pi/2-\eps^{1/2}-\rho_1}
\left| \wh T(\theta)^3 \tan (\theta) \right|  \frac 12 
 \cos \theta d\theta \notag \\
& \leq \int_{-\pi/2+\eps^{1/2}+\rho_1}^{\pi/2-\eps^{1/2}-\rho_1} 
\left| \wh T(\theta)^3  \right|    d\theta
\leq \int_{-\pi/2+\eps^{1/2}+\rho_1}^{\pi/2-\eps^{1/2}-\rho_1} 
\left| x(\theta)^3  \right|    d\theta
.\nonumber
\end{align}
It follows from \eqref{d18.9} that
\begin{align*}
|x(\theta)| = O(\eps) (\cos \theta)^{-1} + O(\eps^2)  (\cos \theta)^{-3},
\end{align*}
so
\begin{align*}
|x(\theta)|^3 = O(\eps^3) (\cos \theta)^{-3} + O(\eps^6)  (\cos \theta)^{-9}.
\end{align*}
These bounds and \eqref{d19.5} yield
\begin{align}\label{d22.6}
\E& \left(\left| \wh T(\Theta+\rho)^3 \tan (\Theta+\rho) \right|
\bone_{(-\pi/2+\eps^{1/2} , \pi/2 -\eps^{1/2} -2\rho_1)}(\Theta) \right)\\
&\leq \int_{-\pi/2+\eps^{1/2}+\rho_1}^{\pi/2-\eps^{1/2}-\rho_1} 
\left| x(\theta)^3  \right|    d\theta \notag \\
& \leq \int_{-\pi/2+\eps^{1/2}+\rho_1}^{\pi/2-\eps^{1/2}-\rho_1} 
(O(\eps^3) (\cos \theta)^{-3} + O(\eps^6)  (\cos \theta)^{-9})  d\theta
\notag \\
& \leq O(\eps^3) O(\eps^{-1}) + O(\eps^6) O(\eps^{-4})
= O(\eps^2).\nonumber
\end{align}
The inequality, \eqref{d20.3} and \eqref{d20.2} imply that
\begin{align}\label{d20.4}
\E& \left| \wh T(\Theta+\rho) - T^\eps_1(\pi/2, \Theta )\right|
= O(\eps^2).
\end{align}
This estimate and \eqref{d18.4} give
\begin{align*}%\label{d18.4}
\lim_{\eps\to0} 
\frac {\E T^\eps_1(\pi/2, \Theta )} {\eps^2 \log(1/\eps)}
= h'(\pi/2)h(\pi/2)/2,
\end{align*}
and, therefore, complete the proof of \eqref{MeanT}.

\bigskip
(ii)
Recall definitions and notation from the first part of the proof.
We have
 \begin{align}\label{d19.1}
 \var\left(T^\eps_1(\pi/2, \Theta )\right)
&= \E \left(T^\eps_1(\pi/2, \Theta )^2\right)
- \left( \E T^\eps_1(\pi/2, \Theta ) \right)^2 \\
 &= \mathbb{E}\left((T_1^\epsilon(\pi/2 ,\Theta)-\wh T(\Theta+\rho))^2 \right) 
 + \mathbb{E}\left(\wh T(\Theta+\rho)^2 \right) \notag\\
 &\quad + 2\mathbb{E}\left((T_1^\epsilon(\pi/2 ,\Theta)-\wh T(\Theta+\rho))
\wh T(\Theta+\rho) \right)
- \left( \E T^\eps_1(\pi/2, \Theta ) \right)^2. \notag
 \end{align}
We use \eqref{d18.1} and \eqref{d20.4} in the following two estimates,
 \begin{align}\label{d20.5}
 &\mathbb{E}\left((T_1^\epsilon(\pi/2 ,\Theta)-\wh T(\Theta+\rho))^2 \right) 
\leq O(\eps^{1/2})
\mathbb{E}\left|T_1^\epsilon(\pi/2 ,\Theta)-\wh T(\Theta+\rho) \right|
=O(\eps^{5/2}),\\
&\mathbb{E}\left((T_1^\epsilon(\pi/2 ,\Theta)-\wh T(\Theta+\rho))
\wh T(\Theta+\rho) \right)
\leq O(\eps^{1/2})
\mathbb{E}\left|T_1^\epsilon(\pi/2 ,\Theta)-\wh T(\Theta+\rho) \right|
=O(\eps^{5/2}). \label{d20.6}
 \end{align}
 From \eqref{MeanT}, we obtain
 \begin{equation}\label{d20.7}
 \left(\mathbb{E}T_1^\epsilon(\pi/2, \Theta)\right)^2 =O\big(\eps^{4}\log^2(1/\eps)\big)=o\big(\eps^2\log(1/\eps)\big).
 \end{equation}

We now use the same strategy as in \eqref{d18.2},
\begin{align}\label{d20.8}
\E  \left(\wh T(\Theta+\rho)^2\right) 
&= \int_{-\pi/2}^{\pi/2} 
\left(\wh T(\Theta+\rho_1)^2\right)  \frac 12 \cos \theta d\theta\\
&= \int_{-\pi/2}^{\pi/2-2\rho_1} 
\left(\wh T(\Theta+\rho_1)^2\right)  \frac 12 \cos \theta d\theta
+ \int_{\pi/2-2\rho_1}^{\pi/2} 
\left(\wh T(\Theta+\rho_1)^2\right) \frac 12 \cos \theta d\theta.\nonumber
\end{align}
The second integral can be estimated as follows, using \eqref{d18.1} and \eqref{d19.3},
\begin{align}\label{d20.9}
 \int_{\pi/2-2\rho_1}^{\pi/2} 
\left(\wh T(\Theta+\rho_1)^2\right) \frac 12 \cos \theta d\theta
=
O(\eps) \int_{\pi/2-2\rho_1}^{\pi/2} 
 \cos \theta d\theta
= O(\eps) O(\rho_1^2) = O(\eps^{3}).
\end{align}
For the first integral on the right hand side of \eqref{d20.8}, we 
use the formula 
$\cos (\theta-\rho_1) = \cos \theta \cos \rho_1 + \sin \theta\sin\rho_1$.
and the fact that $\wh T\left(\theta\right)^2$ is an even function,
\begin{align}\label{d20.10}
\int_{-\pi/2}^{\pi/2-2\rho_1} 
&\left(\wh T(\Theta+\rho_1)^2\right) \frac 12 \cos \theta d\theta
= \frac 12 \int_{-\pi/2+\rho_1}^{\pi/2-\rho_1} 
\wh T\left(\theta\right)^2  \cos( \theta-\rho_1) d\theta\\
&= \cos\rho_1 \int_{-\pi/2+\rho_1}^{\pi/2-\rho_1} 
\wh T\left(\theta\right)^2 \frac 12 \cos \theta d\theta
=\cos\rho_1 \int_{-\pi/2}^{\pi/2} 
\wh T\left(\theta\right)^2 \frac 12 \cos \theta d\theta
+ O(\eps^3).\nonumber
\end{align}
The last equality above follows from an estimate similar to the one in \eqref{d20.9}.
We combine \eqref{d20.10} with \eqref{d20.8} and \eqref{d20.9}, and also use \eqref{d19.3}, to obtain 
\begin{align}\label{d20.12}
\lim_{\eps\to0} 
\frac {\E  \left(\wh T(\Theta+\rho)^2\right)} {(\eps^2/2) \log(1/\eps)}
=
\lim_{\eps\to0} 
\frac {1} {(\eps^2/2) \log(1/\eps)}
\int_{-\pi/2}^{\pi/2} 
\wh T\left(\theta\right)^2 \frac 12 \cos \theta d\theta.
\end{align}
The last formula matches 
\eqref{d20.11} except that $\eps $ in \eqref{d20.11} has to be replaced with $|\Gamma^1_\eps(\pi/2)-z|$, which is $h_\eps(\pi/2)+O(\eps^2)$, in view of 
\eqref{d17.1}. It follows from \bht, \eqref{d20.11} and \eqref{d20.12} that
\begin{align*}%\label{d20.13}
\lim_{\eps\to0} 
\frac {\E  \left(\wh T(\Theta+\rho)^2\right)} {(\eps^2/2) \log(1/\eps)}
=
h^2(\pi/2).
\end{align*}
Combining this with \eqref{d19.1}-\eqref{d20.7}
 yields 
\begin{align*}%\label{d20.13}
\lim_{\eps\to0} 
\frac {\var\left(T^\eps_1(\pi/2, \Theta )\right)} {(\eps^2/2) \log(1/\eps)}
=
h^2(\pi/2).
\end{align*}
\end{proof}

\begin{lemma}\label{EstimR} 
The following assertions hold uniformly in $\alpha \in[0,2\pi)$,
\begin{align}\label{MeanEstimDistordu}
\lim_{\eps\rightarrow 0}\frac{\mathbb{E}(R_1^\eps(\alpha))}{(\eps^{2}/2)\log(1/\eps)}&=h'(\alpha)h(\alpha),\\
\label{VarEstimDistordu}
\lim_{\eps\rightarrow 0}\frac{\var(R_1^\eps(\alpha))}{(\eps^2/2)\log(1/\eps)}&=h^2(\alpha).
\end{align}
\end{lemma}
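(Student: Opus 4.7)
The strategy is to mirror the proof of Lemma \ref{EstimT} with the roles of the inner and outer boundaries interchanged, adding one extra step to handle the conditioning on $\bs_1^\eps=1$. By the uniform bounds in \bho--\bhfi, the constants in all estimates are independent of $\alpha$, so it suffices to prove \eqref{MeanEstimDistordu}--\eqref{VarEstimDistordu} at $\alpha=\pi/2$. The light ray starts at $\Gamma^0_\eps(\pi/2)$ with Lambertian direction $\Theta$ measured from the inner normal to $\Gamma^0_\eps$ at that point.

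Geometrically, I would identify $\alpha''$ near $\pi/2$ such that the normal $L''(\alpha'')$ to $\Gamma^1_\eps$ at $\Gamma^1_\eps(\alpha'')$ passes through $\Gamma^0_\eps(\pi/2)$; the argument leading to \eqref{d15.0}, applied to $\Gamma^1_\eps$ rather than $\Gamma^0_\eps$, yields $|\pi/2-\alpha''|=O(\eps^2)$. A rotation about $\Gamma^0_\eps(\pi/2)$ by an angle $\rho=\gamma^1_\eps(\pi/2)+O(\eps^2)$ (the analogue of \eqref{d15.1}) sends $\Gamma^1_\eps(\alpha'')$ to a point $(0,z)$ on the vertical axis below $\Gamma^0_\eps(\pi/2)$, and one checks as in \eqref{d17.1} that $|\Gamma^0_\eps(\pi/2)-z|=h_\eps(\pi/2)+O(\eps^2)$, where $h_\eps=f_\eps+g_\eps$. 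Let $\calC'$ be the osculating circle of the rotated $\Gamma^1_\eps$ at $(0,z)$, and define $\wh R(\theta)$ as the angular jump obtained by replacing $\Gamma^1_\eps$ by $\calC'$ and solving the quadratic analogous to \eqref{d7.2}. The cubic error bounds \eqref{d18.5}--\eqref{d20.1} then carry over and produce, via the argument of \eqref{d19.5}--\eqref{d20.4}, an $O(\eps^2)$ bound for $\E|R^\eps_1(\pi/2,\Theta)-\wh R(\Theta+\rho)|$ restricted to the event $\bs_1^\eps=1$.

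With this setup the tilt entering the Lambertian integration is $\rho_1=\rho+\gamma^0_\eps(\pi/2)$, so $\rho_1/\eps\to h'(\pi/2)$ exactly as in \eqref{d17.10}; and the effective gap playing the role of $f_\eps(\pi/2)$ in \eqref{d18.9} is $h_\eps(\pi/2)+O(\eps^2)$. The integrals \eqref{d15.2}--\eqref{d18.3} and \eqref{d20.8}--\eqref{d20.12} then go through verbatim with these replacements, yielding $\E\wh R(\Theta+\rho)\sim h'(\pi/2)\,h(\pi/2)\,(\eps^2/2)\log(1/\eps)$ and $\E(\wh R(\Theta+\rho)^2)\sim h^2(\pi/2)\,(\eps^2/2)\log(1/\eps)$; the second reduction invokes Lemma \ref{variance2} with $\eps$ replaced by $h_\eps(\pi/2)$ at the analogue of \eqref{d20.12}. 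The formula \eqref{VarEstimDistordu} follows from $\var(R^\eps_1)=\E(R^\eps_1)^2-(\E R^\eps_1)^2$ together with $(\E R^\eps_1)^2=O(\eps^4\log^2(1/\eps))=o(\eps^2\log(1/\eps))$.

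The main new obstacle is the conditioning on $\bs_1^\eps=1$: we must argue that the leakage contribution, i.e., the part of the Lambertian integral over angles $\Theta$ for which $\bs_1^\eps=0$ (a set of Lambertian measure $p_\eps(\pi/2)=O(\eps)$ by Lemma \ref{probReste}), is of order $o(\eps^2\log(1/\eps))$ in both moments. For the second moment, the bound $|S^\eps_1|=O(\sqrt{\eps})$ from Lemma \ref{maxDep} gives a contribution of $O(\eps\cdot\eps)=O(\eps^2)$, which is acceptable. For the first moment the naive bound only gives $O(\eps^{3/2})$, so I would instead exploit the near-symmetry of $S^\eps_1(\Theta)$ about $\Theta=\gamma^0_\eps(\pi/2)$: on the symmetric Lambertian integration set $\{|\Theta|>\pi/2-c\sqrt{\eps}\}$, the leading odd-in-$\Theta$ part integrates to zero against $\frac12\cos\Theta$, and the asymmetric correction arising from the non-circularity of $\Gamma^0_\eps$, controlled by the $C^3$-bound \bhfi, contributes only $O(\eps^{5/2})=o(\eps^2\log(1/\eps))$. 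Finally, the passage from the resulting unconditional quantities to the conditional ones costs a multiplicative factor $1/(1-p_\eps(\pi/2))=1+O(\eps)$, which does not affect the leading-order asymptotics.
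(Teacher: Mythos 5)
Your geometric setup (interchanging the roles of $\Gamma^0_\eps$ and $\Gamma^1_\eps$, rotating about $\Gamma^0_\eps(\pi/2)$, using the osculating circle of the rotated inner boundary, identifying the effective gap as $h_\eps(\pi/2)+O(\eps^2)$ and the tilt as $\rho_1=\rho+\gamma^0_\eps(\pi/2)$ with $\sin\rho_1/\eps\to h'(\pi/2)$) matches the paper exactly. The problem is in how you handle the conditioning on $\bs_1^\eps=1$.

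The paper's point is that the conditioning is \emph{built into the integration range}: the quadratic defining $x(\theta)$ (the analogue of \eqref{d7.2}, here \eqref{d22.2}) has a discriminant that vanishes at two angles $\theta_-<\theta_+$ in $(-\pi/2,\pi/2)$, with $\theta_\mp\mp\pi/2 = O(\eps^{1/2})$; outside $(\theta_-,\theta_+)$ the ray simply misses the osculating circle $\calC'$ and $\wh R(\theta)$ is undefined. So the paper replaces every integral $\int_{-\pi/2}^{\pi/2}(\dots)\frac12\cos\theta\,d\theta$ from the proof of Lemma \ref{EstimT} by $\int_{\theta_-}^{\theta_+}(\dots)\frac1{2c_*}\cos\theta\,d\theta$ with $c_*=1-O(\eps)$, and this restricted, normalized integral \emph{is} the conditional expectation up to the circle-vs-curve error. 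Your claim that the integrals \eqref{d15.2}--\eqref{d18.3}, \eqref{d20.8}--\eqref{d20.12} ``go through verbatim'' cannot be right: $\E\wh R(\Theta+\rho)$ over all of $(-\pi/2,\pi/2)$ is not even defined. Your subsequent ``leakage'' correction, which invokes $S^\eps_1$ and its near-symmetry, is addressing a different quantity: $S^\eps_1$ is the jump on the complementary event $\{\bs_1^\eps=0\}$ and never enters $R^\eps_1$. What you actually need is the restriction-plus-normalization step, not a subtraction of $S$-contributions from an unconditional average.

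A smaller point: you assert the cubic error bounds \eqref{d18.5}--\eqref{d20.1} ``carry over,'' but the paper modifies the split of $(\theta_-,\theta_+)$, using the cutoff $\pm(\pi/2-\eps^{1/2}\log^{1/4}(1/\eps))$ rather than $\pm(\pi/2-c\eps^{1/2})$, precisely because the ray hits the inner boundary near tangentially as $\theta\to\theta_\pm$, which degrades the geometric estimate there; the resulting $A_2$-contribution is $O(\eps^2\log^{1/2}(1/\eps))$, still $o(\eps^2\log(1/\eps))$, but the modification is not cosmetic and should be addressed rather than waved through.
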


\begin{proof}
The proof proceeds along the same lines as the proof of Lemma \ref{EstimT}.
We will discuss only the changes to that proof that need to be made to accommodate it to the current setting. 

\medskip
(i) The roles of the following objects need to be interchanged:

\begin{enumerate}
\item $\Gamma^0_\eps$ and $\Gamma^1_\eps$, 
\item $f_\eps$ and $g_\eps$; the sign in front  of the function needs to be adjusted, for example, we typically need $1+g_\eps$ and $1-f_\eps$, to match the definitions of $\Gamma^0_\eps$ and $\Gamma^0_\eps$,
\item $|\kappa^0_\eps|$ and $|\kappa^1_\eps|$.
\end{enumerate}

We define $\wh R$ and $R'$ in the way analogous to $\wh T$ and $T'$.

\medskip
(ii) The equation for $x(\theta)$ is analogous to \eqref{d7.2},
\begin{align}\label{d22.2}
x&(\theta)= 
\frac{-a(1+g_\eps(\pi/2)-z_1)+\sqrt{(a^2+1)\kappa^1_\eps(\alpha')^{-2}-(1 +g_\eps(\pi/2)-z_1)^2}}{1+a^2}.
\end{align}
In view of \eqref{d22.1},
the expression under the square root sign in \eqref{d7.2} is equal to
\begin{align*}
&(a^2+1)\kappa^0_\eps(\alpha')^{-2}-(1-f_\eps(\pi/2)-z_1)^2\\
&=( \sin\theta)^{-2}
\left((1+ g_\eps(\pi/2) - z_1)^2 + O(\eps^2)\right)
-(1-f_\eps(\pi/2)-z_1)^2.
\end{align*}
It is easy to see that this quantity is always non-negative for small $\eps>0$.
The analogous expression in \eqref{d22.2} is
\begin{align*}
&(a^2+1)\kappa^1_\eps(\alpha')^{-2}-(1+g_\eps(\pi/2)-z_1)^2\\
&=( \sin\theta)^{-2}
\left((1-f_\eps(\pi/2) - z_1)^2 + O(\eps^2)\right)
-(1+g_\eps(\pi/2)-z_1)^2.
\end{align*}
This quantity is equal to $0$ if
\begin{align}\label{d22.3}
|\sin \theta| = 1  - h_\eps(\pi/2) + O(\eps^2).
\end{align}
Let $\theta_-$ and $\theta_+$ be the two solutions to \eqref{d22.3} in $(-\pi/2, \pi/2)$. Since 
\begin{align*}
\theta_- -( - \pi/2) = O\left(\sqrt{h_\eps(\pi/2)}\right)
= O\left(\eps^{1/2}\right) \quad \text{  and  } \quad
\pi/2-\theta_+ = O\left(\sqrt{h_\eps(\pi/2)}\right)
= O\left(\eps^{1/2}\right),
\end{align*}
we have
\begin{align*}
c_*:=\int_{\theta_-}^{\theta_+} \frac 12 \cos \theta d \theta = 1 - O(\eps). 
\end{align*}
It follows that all integrals of the form 
$\int_{-\pi/2}^{\pi/2} ( \dots ) \frac 12 \cos \theta d \theta$ that appear in the  proof of Lemma \ref{EstimT} should be replaced with the integrals of the form 
$\int_{\theta_-}^{\theta_+} ( \dots ) \frac 1{2c_*} \cos \theta d \theta$
in the present proof. Since $c_* = 1 - O(\eps)$, the extra factor $1/c_*$ will not affect the normalizing constant in
\eqref{MeanEstimDistordu}-\eqref{VarEstimDistordu}
relative to 
\eqref{MeanT}-\eqref{VarT}.

\medskip
(iii)
The last element of the proof of Lemma \ref{EstimT} that needs to be modified is the estimate of $\left| \wh T(\Theta+\rho) - T^\eps_1(\pi/2, \Theta )\right|$. We start by modifying \eqref{d18.5} and \eqref{d20.1}. We divide the interval $(\theta_-,\theta_+)$ %$(-\pi/2, \pi/2)$ 
into two subsets,
\begin{align*}
A_1:=
\left( -\pi/2 +\eps^{1/2} \log^{1/4}(1/\eps),\pi/2 -\eps^{1/2} \log^{1/4}(1/\eps)\right)
\end{align*}
and
\begin{align*}
A_2:=
\left(\theta_-, -\pi/2 +\eps^{1/2} \log^{1/4}(1/\eps)\right)
\cup \left(\pi/2 -\eps^{1/2} \log^{1/4}(1/\eps), \theta_+\right).
\end{align*}
The same geometric analysis as in the proof of Lemma \ref{EstimT} yields
\begin{align*}%\label{d18.5}
\left| \wh R(\theta) - R'(\theta)\right| = O\left( \wh R(\theta)^3 \tan \theta \right)
\end{align*}
for $\theta \in A_1$, and % (not in $(\theta_-,\theta_+)$), and
\begin{align*}%\label{d20.1}
\left| \wh R(\theta) - R'(\theta)\right| = O\left( \wh R(\theta)^3 \eps^{-1/2} \right)
\end{align*}
for $\theta \in A_2$.
The  analogue of \eqref{d20.3} is
\begin{align}\label{d22.4}
\E& \left| \wh R(\Theta+\rho) - R^\eps_1(\pi/2, \Theta )\right|
= \E \left| \wh R(\Theta+\rho) - R'(\Theta+\rho)\right|\\
&\leq c_4
\E \left(\left| \wh R(\Theta+\rho)^3 \tan (\Theta+\rho) \right|
\bone_{A_1}(\Theta) \right) + c_4
\E \left(\left| \wh R(\Theta+\rho) ^3 \eps^{-1/2}\right|
\bone_{A_2}(\Theta) \right).\notag
\end{align}
The analogue of \eqref{d20.2} is
\begin{align}\label{d22.5}
\E &\left(\left| \wh R(\Theta+\rho) ^3 \eps^{-1/2}\right|
\bone_{A_2}(\Theta) \right)
\leq O(\eps) 
\int_{A_2 } \frac 12 \cos \theta d\theta
= O(\eps) O\left(\eps \log^{1/2}(1/\eps)\right)\\
&=  O\left(\eps^2 \log^{1/2}(1/\eps)\right).\notag
\end{align}
The analogue of \eqref{d22.6} is
\begin{align}\label{d22.7}
\E& \left(\left| \wh R(\Theta+\rho)^3 \tan (\Theta+\rho) \right|
\bone_{A_1}(\Theta) \right)\\
&\leq \int_{-\pi/2+\eps^{1/2} \log^{1/4}(1/\eps)+\rho_1}^{\pi/2-\eps^{1/2} \log^{1/4}(1/\eps)-\rho_1} 
\left| x(\theta)^3  \right|    d\theta \notag \\
& \leq \int_{-\pi/2+\eps^{1/2} \log^{1/4}(1/\eps)+\rho_1}^{\pi/2-\eps^{1/2} \log^{1/4}(1/\eps)-\rho_1} 
(O(\eps^3) (\cos \theta)^{-3} + O(\eps^6)  (\cos \theta)^{-9})  d\theta
\notag \\
& \leq O(\eps^3) O(\eps^{-1} \log^{-1/2}(1/\eps)) + O(\eps^6) O(\eps^{-4}\log^{-1}(1/\eps))
= O(\eps^2).\nonumber
\end{align}
The estimates \eqref{d22.4}, \eqref{d22.5} and \eqref{d22.7} are accurate enough to yield an analogue of \eqref{d20.4}.

\medskip
With these changes, the other steps in the proof  of Lemma \ref{EstimT}
can be easily adjusted to generate a proof of 
\eqref{MeanEstimDistordu}-\eqref{VarEstimDistordu}.
\end{proof}

\begin{lemma}\label{EstimS}
 We have uniformly in $\alpha\in[0,2\pi)$,
\begin{equation*}
\left\vert \mathbb{E}\big(S_1^\eps(\alpha)\big)\right\vert =O(\eps).
\end{equation*}
\end{lemma}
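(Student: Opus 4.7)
By the rotation invariance of $\calD_\eps$ it suffices to prove the bound with uniform constants for $\alpha=\pi/2$. Writing $\Theta$ for the Lambertian reflection angle from the inner normal to $\Gamma^0_\eps$ at $P:=\Gamma^0_\eps(\pi/2)$, we have
\[
p_\eps(\pi/2)\,\E\!\bigl(S^\eps_1(\pi/2)\bigr)
=\int_E\bigl(\bal^\eps_1(\theta)-\pi/2\bigr)\,\tfrac12\cos\theta\,d\theta,
\]
where $E\subset(-\pi/2,\pi/2)$ collects those $\theta$ for which the resulting ray returns to $\Gamma^0_\eps$ without first intersecting $\Gamma^1_\eps$. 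Since $p_\eps(\pi/2)\ge\eps/2$ by Lemma \ref{probReste}, the lemma reduces to showing this integral is $O(\eps^2)$. In the unperturbed annulus of Section \ref{ResultAnnulus} the integrand is odd in $\theta$ and the integral vanishes identically; the whole task is to quantify the asymmetry introduced by the perturbation.

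\textbf{Osculating-circle approximation.} The plan is to approximate $\Gamma^0_\eps$ near $P$ and $\Gamma^1_\eps$ near the grazing point by their osculating circles $\calC^0,\calC^1$ with centers $C^0,C^1$ and radii $R^0,R^1$. Assumption \bhfi{} and the chord-length bound $O(\sqrt\eps)$ from Lemma \ref{maxDep} imply that each curve differs from its osculating circle by at most $O(\eps^{3/2})$ in the normal direction on the relevant arc. Because the ray meets $\Gamma^0_\eps$ again at angle $\pi/2-\theta$ from the tangent with $\cos\theta=O(\sqrt\eps)$ on $E$, a normal perturbation of size $O(\eps^{3/2})$ shifts $\bal^\eps_1(\theta)$ by $O(\eps^{3/2}/\cos\theta)$; the singular factor $1/\cos\theta$ is killed by the Lambertian weight $\cos\theta$, leaving a contribution $O(\eps^{3/2})\cdot|E|=O(\eps^2)$ to the integral. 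The same perturbation shifts each endpoint $\theta_\pm$ of $E$ by at most $O(\eps)$, adding another $O(\sqrt\eps)\cdot O(\sqrt\eps)\cdot O(\eps)=O(\eps^2)$.

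\textbf{Two-circle geometry and cancellation.} On $\calC^0$, a chord from $P$ at angle $\theta$ from the inner normal terminates at angular position $\psi_0+(2\theta-\pi\,\sgn\theta)$ measured from $C^0$; passing from ``angle from $C^0$'' to ``angle from origin'' produces an additional pointwise correction of $O(|C^0|\cdot|\psi_1-\psi_0|)=O(\eps^{3/2})$, contributing $O(\eps^{5/2})$. The set $E$ is the complement in $(-\pi/2,\pi/2)$ of $(\theta_-,\theta_+)$, with $\theta_\pm=\mu\pm\alpha^*$, $\alpha^*=\arcsin(R^1/|PC^1|)$ and $\mu$ the signed angle at $P$ between $\overrightarrow{PC^0}$ and $\overrightarrow{PC^1}$. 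A direct computation using $|C^0|,|C^1|=O(\eps)$ and $|PC^0|,|PC^1|=1+O(\eps)$ gives the two key estimates
\[
\pi/2-\alpha^*=O(\sqrt\eps),\qquad|\mu|=O(\eps).
\]
After substituting $\theta\mapsto-\theta$ on the negative half of $E$ the two ``annulus'' pieces cancel exactly and the leading term collapses to
\[
\int_{\alpha^*-|\mu|}^{\alpha^*+|\mu|}(\pi-2\theta)\,\tfrac12\cos\theta\,d\theta
\]
(or its analogue for $\mu<0$); the integrand is $O(\sqrt\eps)\cdot O(\sqrt\eps)=O(\eps)$ throughout an interval of length $2|\mu|=O(\eps)$, so the integral is $O(\eps^2)$. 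Assembling all contributions yields $p_\eps(\pi/2)\,\E(S^\eps_1(\pi/2))=O(\eps^2)$, and dividing by $p_\eps(\pi/2)\ge\eps/2$ completes the proof.

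\textbf{Main obstacle.} The principal technical difficulty is to keep every error term at $O(\eps^2)$ uniformly across the tangent regime $\theta\to\pm\pi/2$: several intermediate estimates contain the singular factor $1/\cos\theta$, which is tamed only by the matching Lambertian weight $\cos\theta$, and the extra factor $\sqrt\eps$ beyond the trivial bound $|\E(S^\eps_1)|\le c\sqrt\eps$ comes entirely from the nearly perfect symmetry between the two halves of $E$, which must be tracked quantitatively through the osculating-circle reduction.
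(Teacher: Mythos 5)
Your plan is correct and tracks the paper's proof closely: approximate $\Gamma^0_\eps$ near $P=\Gamma^0_\eps(\pi/2)$ by its osculating circle, exploit the exact parity of the Lambertian integral against circular chords to kill the leading term on the symmetric part of the angular window, and then bound the remaining error terms and the $O(\eps)$-wide asymmetric piece. The orders of magnitude you obtain ($O(\eps^2)$ for the unnormalized integral, hence $O(\eps)$ after dividing by $p_\eps\geq\eps/2$) agree with the paper's, which bounds the conditional expectation directly by $O(\eps)$.

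The two places where you deviate are worth noting. (1) To remove the leading osculating-circle term you measure angles relative to $\overrightarrow{PC^0}$ and correct by $O(|C^0|\cdot|\psi_1-\psi_0|)=O(\eps^{3/2})$; the paper instead rotates $\calC$ about $P$ so that its center lies on the vertical axis, defines $S_*$ on the rotated circle $\calC_*$, uses the exact identity $\E(S_*\bone_{A_2})=0$, and bounds $|\wh S - S_*|=O(|\pi/2-\theta|\eps)$. These are equivalent, and your pointwise bound $O(\eps^{3/2}/\cos\theta)$ for the curve-versus-circle error is cruder than the paper's $O(\wh S^3\tan\theta)$ but suffices once the Lambertian weight $\cos\theta$ is applied. (2) To bound the width of the asymmetric window you use the osculating circle of $\Gamma^1_\eps$ together with the tilt angle $\mu$ between $\overrightarrow{PC^0}$ and $\overrightarrow{PC^1}$, and the elementary estimate $|\mu|=O(\eps)$; the paper instead sandwiches the relevant arc of $\Gamma^1_\eps$ between concentric circles $\calC((0,0),1-f^\pm_\eps)$, reduces to Lemma \ref{critRemain}, and deduces $|{-\theta_-}-\theta_+|=O(\eps)$ from $f^+_\eps-f^-_\eps=O(\eps^{3/2})$. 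Your version is geometrically tidier; the paper's has the advantage that it reuses the exact annulus formulas. Both are valid, and your proposal contains no gap in the argument, only details that would need to be written out as in the paper.
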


\begin{proof}
Let $\calC$ be the osculating circle  of $\Gamma^0_\eps$ at $\Gamma^0_\eps(\pi/2)$. Note that the osculating circle is defined relative to  $\Gamma^0_\eps$ and not relative to a rotation of $\Gamma^0_\eps$, unlike in the proofs of Lemmas \ref{EstimT} and \ref{EstimR}. 
Let $\calR$ be the rotation about the point $\Gamma^0_\eps(\pi/2)$
such that the center of the circle $\calC_* := \calR(\calC)$ is at a point $(0,z_1)$, with $z_1< 1$.

Suppose that a light ray leaves $\Gamma^0_\eps(\pi/2)$ at an angle $\theta$,
relative to the normal vector to $\Gamma^0_\eps$ at $\Gamma^0_\eps(\pi/2)$.
It follows from Lemma \ref{convex} that there exist $\theta_- \in (-\pi/2, 0)$ and $\theta_+ \in (0,\pi/2)$ such that (i) the light ray does not intersect $\Gamma^1_\eps$ and it intersects $\Gamma^0_\eps$ for $\theta \in A_1 := (-\pi/2, \theta_-) \cup (\theta_+, \pi/2)$, and (ii) the light ray intersects $\Gamma^1_\eps$ before intersecting $\Gamma^0_\eps$ for $\theta \in ( \theta_-,\theta_+)$. 

For $\theta\in A_1$, the light ray  intersects $\Gamma^0_\eps$ at a point  $\wp_\eps(\pi/2,0,\theta) \exp(i (\pi/2+ S_1^\eps(\theta)))$, in the notation of \eqref{o13.2b}; we added $\theta$ to the notation to make dependence on $\theta$ explicit. 
The same light ray will intersect the circle $\calC$ at a point $\wh r(\theta) \exp(i(\pi/2+ \wh S(\theta)))$.
Let $ r_*(\theta) \exp(i(\pi/2+  S_*(\theta)))= \calR\left(\wh r(\theta) \exp(i(\pi/2+ \wh S(\theta)))\right)$. In other words, $ r_*(\theta) \exp(i(\pi/2+  S_*(\theta)))$ represents the point of intersection with the rotated circle $\calC_*$.

Let $\theta_0 = \max(-\theta_-, \theta_+)$
and $A_2 = (-\pi/2, \theta_0) \cup (\theta_0, \pi/2)$. 
 A calculation similar to that in Lemma \ref{critRemain} gives
$\theta_- - (-\pi/2) = O(\eps^{1/2}) $ and $\pi/2 -\theta_+ = O(\eps^{1/2}) $, so
\begin{align}\label{d23.2}
\pi/2-\theta_0 = O(\eps^{1/2}).
\end{align}

By symmetry,
\begin{align}\label{d23.3}
\E \left(S_*(\Theta) \bone_{A_2}(\Theta) \right) = 0.
\end{align}
Elementary geometry shows that 
\begin{align}\label{d23.5}
\lim_{\eps\to0} \sup_{\theta \in A_1}
\left|
\frac {\wh S(\theta)}{2|\pi/2-\theta|} -1\right| =0.
\end{align}
 The angle of rotation for $\calR$ is equal to $|\gamma^0_\eps(\pi/2)|$ and this is of order $O(\eps)$, by \eqref{d4.1}. The radius of $\calC$ is $|\kappa^0_\eps(\pi/2)|^{-1}$ and this is $1 + O(\eps)$, by the same reasoning that gave \eqref{d23.1}.
These observations easily imply that $\left| \wh S(\theta) - S_*(\theta)\right| = O(|\pi/2-\theta| \eps)$, uniformly in $\theta \in A_1$. Hence, using \eqref{d2.2}, \eqref{d23.2} and \eqref{d23.3},
\begin{align}\label{d23.4}
\left|\E \left(\wh S(\Theta) \bone_{A_2}(\Theta) \right)\right|
&=
\left|\E \left(S_*(\Theta) \bone_{A_2}(\Theta) \right)
+
\E \left(\left(\wh S(\Theta)- S_*(\Theta)\right) \bone_{A_2}(\Theta) \right)\right|\\
& =
\left|\E \left(\left(\wh S(\Theta)- S_*(\Theta)\right) \bone_{A_2}(\Theta) \right)\right| \notag \\
& \leq \left| \frac 2 \eps
\int_{(-\pi/2, \theta_0) \cup (\theta_0, \pi/2)} O(|\pi/2-\theta| \eps) \frac 12 \cos \theta d \theta\right|\notag \\
&=  O\left(|\pi/2-\theta_0|^3\right)
= O\left(\eps^{3/2}\right).\notag 
\end{align}

We have the following analogue of \eqref{d18.5},
\begin{align*}%\label{d18.5}
\left| \wh S(\theta) - S_1^\eps(\theta)\right| = O\left( \wh S(\theta)^3 \tan \theta \right),
\end{align*}
which, combined with \eqref{d23.2}, \eqref{d23.5} and \eqref{d23.4}, implies
\begin{align}\label{d24.1}
\left|\E \left( S_1^\eps(\Theta) \bone_{A_2}(\Theta) \right)\right|
&\leq
\left|\E \left(\wh S(\Theta) \bone_{A_2}(\Theta) \right)\right|
+
\left|\E \left(\left(\wh S(\Theta)- S_1^\eps(\Theta)\right) \bone_{A_2}(\Theta) \right)\right|\\
& \leq O\left(\eps^{3/2}\right)
+ \left| \frac 2 \eps \int_{(-\pi/2, \theta_0) \cup (\theta_0, \pi/2)} O(|\pi/2-\theta| ^3) \tan \theta \frac 12 \cos \theta d \theta\right| \notag\\
&= O\left(\eps^{3/2}\right)
+ O\left(|\pi/2-\theta_0|^4/\eps\right)
= O\left(\eps\right).\notag
\end{align}
It remains to estimate 
$\left|\E \left( S_1^\eps(\Theta) \bone_{A_1\setminus A_2}(\Theta) \right)\right|$.

Assume without loss of generality that $\theta_0 =-\theta_-$ so $A_3:=A_1\setminus A_2 = (\theta_+, \theta_0) = (\theta_+, -\theta_-)$.
Lemma \ref{maxDep} implies that if a light ray starting from $\Gamma^0_\eps(\pi/2)$ intersects $\Gamma^1_\eps$ at a point $\Gamma^1_\eps(\alpha)$ then $\pi/2 - 12 \sqrt{\eps} \leq \alpha
\leq \pi/2 + 12 \sqrt{\eps}$.
Let
\begin{align*}
f^+_\eps &= \sup(f_\eps(\alpha): \pi/2 - 12 \sqrt{\eps} \leq \alpha
\leq \pi/2 + 12 \sqrt{\eps}),\\
f^-_\eps &= \inf(f_\eps(\alpha): \pi/2 - 12 \sqrt{\eps} \leq \alpha
\leq \pi/2 + 12 \sqrt{\eps}).
\end{align*}
It follows from \bhth{} that $f^+_\eps - f^-_\eps \leq 24 \sqrt{\eps} \|f'_\eps\|
= O(\eps^{3/2})$.
If a light ray starting from $\Gamma^0_\eps(\pi/2)$ intersects $\Gamma^1_\eps$ then it must intersect the circle $\calC((0,0), 1-f^-_\eps)$ but it cannot intersect $\calC((0,0), 1-f^+_\eps)$.
We will rescale the circles so that we can apply Lemma \ref{critRemain}.
We define $\eps_-$ and $\eps_+$ by
\begin{align*}
1- \eps_- = \frac {1-f^-_\eps}{1+g_\eps(\pi/2)},
\qquad
1- \eps_+ = \frac {1-f^+_\eps}{1+g_\eps(\pi/2)},
\end{align*}
and note that $\eps_+ - \eps_- 
= O(\eps^{3/2})$ because $f^+_\eps - f^-_\eps 
= O(\eps^{3/2})$.
Then a light ray starting from $\Gamma^0_\eps(\pi/2)$ 
at an angle $\theta$ relative to vertical
intersects  $\calC((0,0), 1-f^-_\eps)$  and does not intersect $\calC((0,0), 1-f^+_\eps)$ if and only if a light ray starting from $(0,1)$ 
at an angle $\theta$ relative to vertical
intersects  $\calC((0,0), 1-\eps_-)$  and does not intersect $\calC((0,0), 1-\eps_+)$.
According to Lemma \ref{critRemain}, the angle must be in the range
\begin{align*}
A_4:=
\left(
\arccot\left(
\left(\frac{2 \eps_- -\eps_-^2}{(1-\eps_-)^2}\right)^{1/2}\right),
\arccot\left(
\left(\frac{2 \eps_+ -\eps_+^2}{(1-\eps_+)^2}\right)^{1/2}\right)
\right).
\end{align*}
Since $\eps_+ - \eps_- = O(\eps^{3/2})$, the length of $A_4$ is $O(\eps)$. This implies that $\theta_+ -(-\theta_-) = O(\eps)$. By \eqref{d23.2}, % It is easy to see that 
%$\pi/2-\theta_0 = O(\eps^{1/2})$ so 
$\cos \theta = O(\eps^{1/2})$ for $\theta\in(-\theta_-, \theta_+)$. We combine these observations with Lemma \ref{maxDep} to obtain
\begin{align*}
\left|\E \left( S_1^\eps(\Theta) \bone_{A_1\setminus A_2}(\Theta) \right)\right|
&\leq \left| \frac 2 \eps \int_{ \theta_+}^{-\theta_-} 12 \sqrt{\eps}  \frac 12 \cos \theta d \theta\right|= O\left(\eps\right).
\end{align*}
The lemma follows from this and \eqref{d24.1}.
\end{proof}

Recall definition \eqref{TimeInverse2} of $N^\eps(t)$ and 
for $n\geq0$, let
\begin{align}\label{d25.2}
\mathcal{F}_n^\epsilon & =\sigma(\bal_k^\epsilon,\bs_k^\epsilon ,\, k=1,\cdots ,n),\\
\Delta B_{n+1}^\epsilon &=\mathbb{E}\left(\bal_{n+1}^\epsilon -\bal_n^\epsilon\mid \mathcal{F}_n^\epsilon\right) , \notag \\
 B^\epsilon(n) &= \sum_{k=1}^n \Delta B_k^\epsilon ,  \notag \\
  M^\epsilon(n) &=\bal_n^\eps -B^\eps(n),  \notag \\
  \Delta A_{n+1}^\epsilon &=\mathbb{E}\left(\left(M_{n+1}^\epsilon -M_n^\epsilon\right)^2\mid \mathcal{F}_n^\epsilon\right),  \notag \\
  A^\epsilon(n) &=\sum_{k=1}^n \Delta A_k^\epsilon,  \notag \\
  \chi_{n+1}^\epsilon &= \mathbb{E}\left(\Delta\calT_{n+1}^\epsilon \mid \mathcal{F}_n^\epsilon\right) , \notag \\
\zeta(\eps, t)&= \frac{\pi t}{\eps\log(1/\eps)}, \notag \\
\bM^\eps_t &=  M^\eps(N^\eps(\zeta(\eps, t))), \qquad
\text{  for  } t\geq 0, \notag \\ 
\bB^\eps_t &=  B^\eps(N^\eps(\zeta(\eps, t))), \qquad
\text{  for  } t\geq 0, \notag \\
\bA^\eps_t &=  A^\eps(N^\eps(\zeta(\eps, t))), \qquad
\text{  for  } t\geq 0. \notag 
\end{align}
The expectations in the above definitions exist and are finite because of the estimate given in Lemma \ref{maxDep}.
 For $\alpha\in[0,2\pi)$ and $s=0,1$, let
\begin{align*}
\chi^\epsilon (\alpha,j) = \E \left(\Delta\calT_{n+1}^\epsilon \mid  
\bal^\eps_n = \alpha, \bs^\eps_n = s\right).
\end{align*}

\begin{lemma} \label{NormParam}  
We have uniformly in $\alpha$,
\begin{align}\label{t1a}
\lim_{\eps\rightarrow 0}\frac{\chi^\epsilon (\alpha,1)}{\eps}
&=
\frac{\pi}{2}h(\alpha),\\
\lim_{\eps\rightarrow 0}\frac{\chi^\epsilon (\alpha,0)}{\eps}
&=\frac{\pi}{2}h(\alpha). \label{t1b}
\end{align}
\end{lemma}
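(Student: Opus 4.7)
We adapt the proof of Lemma \ref{EstimParam} to account for the variable tube width $h_\eps(\alpha)$. By rotation invariance fix $\bal_0^\eps = \alpha$. For the case $\bs_0^\eps = 1$, applying the identity $|ae^{i\phi_1} - be^{i\phi_2}|^2 = (a-b)^2 + 2ab(1-\cos(\phi_1-\phi_2))$ to \eqref{o19.1b} gives
\begin{align*}
(\Delta\calT_1^\eps)^2 &= (f_\eps(\alpha) + g_\eps(\alpha+T_1^\eps(\alpha)))^2 \\
&\quad + 2(1-f_\eps(\alpha))(1+g_\eps(\alpha+T_1^\eps(\alpha)))(1-\cos T_1^\eps(\alpha)).
\end{align*}
Using Lemma \ref{maxDep} ($|T_1^\eps(\alpha)| \le 12\sqrt\eps$) together with \bho{} and \bhth{} (so that $g_\eps(\alpha + T_1^\eps) - g_\eps(\alpha) = O(\eps^{3/2})$) and the Taylor expansion $1 - \cos T = (T^2/2)(1 + O(\eps))$, this simplifies to $(\Delta\calT_1^\eps)^2 = h_\eps(\alpha)^2 + (T_1^\eps(\alpha))^2(1+O(\eps)) + O(\eps^{5/2})$. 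Since $h_\eps(\alpha) \ge \eps$ by \bho, dividing by $h_\eps(\alpha)^2$ yields
\begin{align*}
\Delta\calT_1^\eps/h_\eps(\alpha) = \sqrt{1 + (T_1^\eps(\alpha)/h_\eps(\alpha))^2(1+O(\eps)) + O(\sqrt\eps)}.
\end{align*}

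The crucial geometric input is the scaling limit $T_1^\eps(\alpha,\theta)/h_\eps(\alpha) \to \tan\theta$, pointwise in $\theta \in (-\pi/2,\pi/2)$ and uniformly in $\alpha$, where $T_1^\eps(\alpha,\theta)$ denotes $T_1^\eps(\alpha)$ conditioned on the Lambertian reflection angle $\Theta = \theta$. This is the analog for the perturbed annulus of \eqref{o12.1}-\eqref{o12.2}; it follows from the osculating-circle construction used in the proof of Lemma \ref{EstimT}, since near $\alpha$ both $\Gamma_\eps^0$ and $\Gamma_\eps^1$ are approximated to order $O(\eps^2)$ by concentric circles forming a true annulus of width $h_\eps(\alpha) + O(\eps^2)$, to which the computation of Lemma \ref{EstimParam} applies after rescaling. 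An integrable dominating function is supplied by $|T_1^\eps/h_\eps(\alpha)| \le C|\tan\theta|$ on compact subsets of $(-\pi/2,\pi/2)$ together with Lemma \ref{maxDep} on a neighborhood of $\pm\pi/2$ of length $O(h_\eps(\alpha))$, whose contribution to the integral is $O(\sqrt\eps)$. Dominated convergence then gives
\begin{align*}
\lim_{\eps\to 0} \E\sqrt{1 + (T_1^\eps/h_\eps(\alpha))^2(1+O(\eps)) + O(\sqrt\eps)} = \int_{-\pi/2}^{\pi/2} \frac{1}{|\cos\theta|} \cdot \frac{1}{2}\cos\theta\, d\theta = \frac{\pi}{2},
\end{align*}
and combining with $h_\eps(\alpha)/\eps \to h(\alpha)$ uniformly in $\alpha$ (\bht) yields \eqref{t1a}.

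For \eqref{t1b} we split $\chi^\eps(\alpha,0)$ according to the value of $\bs_1^\eps$. The conditional expectation given $\bs_1^\eps = 0$ is $O(\sqrt\eps)$ by Lemma \ref{maxDep}, and this event has probability $p_\eps(\alpha) = O(\eps)$ by Lemma \ref{probReste}, so its contribution is $O(\eps^{3/2}) = o(\eps)$. For the dominant term, conditioning on $\bs_1^\eps = 1$ and using the mirror identity $(\Delta\calT_1^\eps)^2 = (f_\eps(\alpha+R_1^\eps)+g_\eps(\alpha))^2 + 2(1+g_\eps(\alpha))(1-f_\eps(\alpha+R_1^\eps))(1-\cos R_1^\eps)$, the same osculating-circle analysis yields $R_1^\eps(\alpha,\theta)/h_\eps(\alpha) \to \tan\theta$ for $\theta$ in the subset of $(-\pi/2,\pi/2)$ on which the ray actually reaches $\Gamma_\eps^1$ (the complementary set has Lambertian measure $O(\eps)$ and is negligible), giving the same limit $(\pi/2)h(\alpha)$. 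The main technical obstacle is this geometric scaling limit with uniform-in-$\alpha$ convergence and an integrable dominating function near $\theta = \pm\pi/2$, requiring the same careful osculating-circle bookkeeping already carried out in Lemmas \ref{EstimT} and \ref{EstimR}.
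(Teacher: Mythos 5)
Your route is genuinely different from the paper's and is conceptually sound, but it contains a gap that you partly acknowledge yourself. You transport the strategy of the annulus Lemma \ref{EstimParam} to the perturbed setting: expand $(\Delta\calT_1^\eps)^2$ via the law of cosines to expose the angular increment $T_1^\eps(\alpha)$, divide by $h_\eps(\alpha)^2$, and invoke dominated convergence with the scaling limit $T_1^\eps(\alpha,\theta)/h_\eps(\alpha)\to\tan\theta$. The paper instead avoids $T_1^\eps$ entirely: for $|\theta|\le\pi/2-c_5\eps^{1-\beta}$ it shows that the light ray crosses a near-horizontal strip of width $h_\eps(\alpha)+O(\eps^{2\beta})$ and hence the flight length is $\big(h_\eps(\alpha)+O(\eps^{2\beta})\big)/\cos\theta$, with the complementary angular range negligible; the tilt $\gamma^1_\eps=O(\eps)$ is then absorbed in the Lambertian integral. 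That argument is self-contained and gives the uniform-in-$\alpha$ bound directly, because the strip sandwich is explicit.

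The gap in your version is the ``crucial geometric input.'' The pointwise limit $T_1^\eps(\alpha,\theta)/h_\eps(\alpha)\to\tan\theta$, uniformly in $\alpha$, together with a $\theta$-integrable dominating bound (a perturbed-annulus analogue of \eqref{o12.1}--\eqref{o12.2}), is exactly what is needed, and you flag it as the main obstacle but do not establish it. It does not ``follow from the osculating-circle construction used in the proof of Lemma \ref{EstimT}'': that proof produces moment estimates of $T_1^\eps$ via the auxiliary variable $\wh T$ and never asserts a pointwise scaling limit with a dominating function, so the material you need would have to be developed. Note also a slip in the DCT bookkeeping: the region near $\pm\pi/2$ where Lemma \ref{maxDep}'s crude bound $|T_1^\eps|\le 12\sqrt\eps$ must take over (rather than $|T_1^\eps/h_\eps|\lesssim|\tan\theta|$) has angular length $O(\sqrt\eps)$, not $O(h_\eps(\alpha))=O(\eps)$; the conclusion that its contribution is $O(\sqrt\eps)$ still holds because $\sqrt{1+(T_1^\eps/h_\eps)^2}\,\cos\theta$ is uniformly bounded there, but as written the intermediate claim is off. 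The law-of-cosines reduction and the treatment of the $\bs_1^\eps=0$ event via Lemma \ref{probReste} are both fine and match what one would expect; the remaining work is to substantiate the scaling limit, for which the paper's strip approximation is arguably the cleaner device.
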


\begin{proof}

Let $\beta = 2/3$ and assume that $\eps\in(0,\eps_0)$. Since the curvature of the unit circle is 1, for every $c_1>0$ there exists $c_2>0$ such that the  arc of the circle
\begin{align*}
\left\{(1+g_\eps(\pi/2))e^{i(\pi/2+ \alpha)}: -c_1\eps^\beta \leq \alpha \leq c_1 \eps^\beta\right\}
\end{align*}
lies below the line 
$\{(z_1,z_2) : z_2 = 1+g_\eps(\pi/2) \}$ and above the line
$\{(z_1,z_2) : z_2 = 1+g_\eps(\pi/2) - c_2 \eps^{2\beta}\}$. 
We have assumed that
 $\|g'_\eps\|_\infty = O(\eps)$ so for some $c_3>0$ and all $-c_1\eps^\beta \leq \alpha \leq c_1\eps^\beta$ we have
$|g_\eps(\pi/2+\alpha) - g_\eps(\pi/2)| \leq c_3 \eps^{1+\beta}$. These two observations imply that for some $c_4>0$,
the set
\begin{align*}
\left\{\Gamma^0_\eps(\pi/2+ \alpha): -c_1\eps^\beta \leq \alpha \leq c_1\eps^\beta\right\}
\end{align*}
lies below the line 
$L:=\{(z_1,z_2) : z_2 = 1+g_\eps(\pi/2) + c_4 \eps^{2\beta}\}$ and above the line
$\{(z_1,z_2) : z_2 = 1+g_\eps(\pi/2) - c_4 \eps^{2\beta}\}$.
This implies that if the light ray starts from $\Gamma^1_\eps(\pi/2)$ at time $t=0$, at an angle $\theta$ relative to the vector $(0,1)$
and $-\pi/2+c_5\eps^{1-\beta}\leq \theta \leq \pi/2-c_5\eps^{1-\beta}$ then 
the light ray crosses  $L$  at a point $(z_1, z_2)$ with $|z_1| \leq c_6 \eps^\beta$. This implies that
\begin{align}\label{d30.1}
\frac{h_\eps(\pi/2)- c_4 \eps^{2\beta}}{\cos \theta}
\leq \Delta\calT_{1}^\eps \leq  \frac{h_\eps(\pi/2)+ c_4 \eps^{2\beta}}{\cos \theta}.
\end{align}

Recall the definition of $\gamma^1_\eps(\pi/2)$ stated before  Lemma \ref{angleecart}. In the following formula, we have to shift the angle $\theta$ by $\gamma^1_\eps(\pi/2)$ because $\theta$ is the angle relative to $(0,1)$ in \eqref{d30.1}. It follows from \eqref{d4.2} that $\gamma^1_\eps(\pi/2)=O(\eps)$, so if $-\pi/2+c_5\eps^{1-\beta}\leq \theta \leq \pi/2-c_5\eps^{1-\beta}$ then $-\pi/2+2c_5\eps^{1-\beta}\leq \theta + \gamma^1_\eps(\pi/2) \leq \pi/2-2c_5\eps^{1-\beta}$, for  small $\eps>0$. 
Let $\theta_- = -\pi/2+\eps^{1-\beta}- \gamma^1_\eps(\pi/2)$ 
and
$\theta_+ = \pi/2-\eps^{1-\beta}- \gamma^1_\eps(\pi/2)$.
We use these observations and the estimate  from Lemma \ref{maxDep} to derive the following,
\begin{align*}
\E\left(\Delta\calT_{1}^\eps \right)
&=\E\left(\Delta\calT_{1}^\eps \bone_{(\theta_-, \theta_+)} (\Theta)
\right)
+\E\left(\Delta\calT_{1}^\eps \bone_{(-\pi/2,\theta_-)\cup(\theta_+, \pi/2)} (\Theta)
\right) \\
&=
\int_{\theta_-}^{ \theta_+} 
\frac{ h_\eps(\pi/2)+ O\left( \eps^{2\beta}\right)}
{\cos \left(\theta  + \gamma^1_\eps(\pi/2)\right)}
 \frac 12 \cos \theta d\theta
 +
\int_{(-\pi/2,\theta_-)\cup(\theta_+, \pi/2)}
O(\eps^{1/2}) \frac 12 \cos \theta d\theta\\
&=
\int_{-\pi/2+\eps^{1-\beta} }^{\pi/2-\eps^{1-\beta}} 
\frac{ h_\eps(\pi/2)+ O\left( \eps^{2\beta}\right)}
{\cos \left(\theta \right)}
 \frac 12 \cos \left(\theta - \gamma^1_\eps(\pi/2)\right) d\theta
 + O\left(\eps^{2(1-\beta)+1/2}\right).
\end{align*}
Recall that $\cos (\theta-\gamma^1_\eps(\pi/2)) = \cos \theta \cos \gamma^1_\eps(\pi/2) + \sin \theta\sin\gamma^1_\eps(\pi/2)$. Thus
\begin{align*}
\E\left(\Delta\calT_{1}^\eps \right)
&=
\int_{-\pi/2+\eps^{1-\beta} }^{\pi/2-\eps^{1-\beta}} 
 h_\eps(\pi/2) \cos \gamma^1_\eps(\pi/2)\frac 12  d\theta
+ O\left( \eps^{2\beta}\right)
\int_{-\pi/2+\eps^{1-\beta} }^{\pi/2-\eps^{1-\beta}} 
\frac{\left| \sin \theta\sin\gamma^1_\eps(\pi/2)\right|}
{\cos \left(\theta \right)}
 \frac 12  d\theta\\
&\quad + O\left(\eps^{5/2-2\beta}\right)\\
&=
\frac{\pi}{2} h_\eps(\pi/2)  \left( 1 + O(\eps^{1-\beta})\right)
+ O\left( \eps^{2\beta}\right) O(\eps) O(\eps^{\beta-1})
 + O\left(\eps^{7/6}\right)\\
&=
\frac{\pi}{2} h_\eps(\pi/2)  \left( 1 + O(\eps^{1/3})\right)
+ O\left( \eps^{2}\right) 
 + O\left(\eps^{7/6}\right).
\end{align*}
It follows that
\begin{align*}%\label{t1a}
\lim_{\eps\rightarrow 0}\frac{\chi^\epsilon (\pi/2,1)}{\eps}
&=
\lim_{\eps\rightarrow 0}\frac{\E\left(\Delta\calT_{1}^\eps \right)}{\eps}
=
\lim_{\eps\rightarrow 0}\frac{\frac{\pi}{2} h_\eps(\pi/2)}{\eps}
=
\frac{\pi}{2}h(\pi/2).
\end{align*}
Our estimates are uniform in $\alpha$ so \eqref{t1a} follows.
The proof of \eqref{t1b} proceeds along similar lines, with only minor modifications, so it is left to the reader.
\end{proof}

\begin{lemma}\label{C3C4C5bis} 
For any $T>0$, 
\begin{align*}
&\lim_{\epsilon\rightarrow 0}
\mathbb{E}\left(\sup_{t\leq T}\left\vert \bal_{N^\eps(\zeta(\eps, t))}^\eps -\bal_{N^\eps(\zeta(\eps, t-))}^\eps\right\vert^2 \right)=0,\\
&\lim_{\epsilon\rightarrow 0}\mathbb{E}(\sup_{t\leq T}\vert \bB^\eps_t-\bB^\eps_{t-}\vert^2 )=0,\\
&\lim_{\epsilon\rightarrow 0}\mathbb{E}(\sup_{t\leq T}\vert \bA^\eps_t-\bA^\eps_{t-}\vert )=0.
\end{align*}
\end{lemma}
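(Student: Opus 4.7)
My plan is to deduce all three statements from the almost sure uniform jump bound $|\bal^\eps_{n+1}-\bal^\eps_n|\leq 12\sqrt{\eps}$ furnished by Lemma \ref{maxDep}, which will translate, via the conditional Jensen inequality, into deterministic a.s.\ bounds on the jumps of $\bB^\eps$ and (after one further layer) on those of $\bA^\eps$. Each of the three processes appearing in the lemma is piecewise constant, jumping only at reflection times, and its jump at such a time equals the increment $\bal^\eps_n-\bal^\eps_{n-1}$, $\Delta B^\eps_n$, or $\Delta A^\eps_n$ of the underlying discrete-time process. Consequently $\sup_{t\leq T}|\cdot|$ of any of these jump processes is bounded a.s.\ by $\sup_n$ of the corresponding discrete increment, and it suffices to bound the latter uniformly in $n$ by a deterministic quantity that vanishes with $\eps$.

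Concretely, the three bounds chain as follows. For the first statement, Lemma \ref{maxDep} gives $|\bal^\eps_n-\bal^\eps_{n-1}|\leq 12\sqrt{\eps}$ a.s., so the quantity inside the expectation is bounded by $144\eps$. For the second statement, conditional Jensen and Lemma \ref{maxDep} give
\[
|\Delta B^\eps_n|=\bigl|\E(\bal^\eps_n-\bal^\eps_{n-1}\mid\mathcal{F}^\eps_{n-1})\bigr|\leq \E\bigl(|\bal^\eps_n-\bal^\eps_{n-1}|\bigm|\mathcal{F}^\eps_{n-1}\bigr)\leq 12\sqrt{\eps},
\]
so the supremum inside the expectation is again bounded a.s.\ by $144\eps$. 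For the third statement, writing $M^\eps_n-M^\eps_{n-1}=(\bal^\eps_n-\bal^\eps_{n-1})-\Delta B^\eps_n$ and combining the preceding two bounds yields $|M^\eps_n-M^\eps_{n-1}|\leq 24\sqrt{\eps}$, whence
\[
\Delta A^\eps_n=\E\bigl((M^\eps_n-M^\eps_{n-1})^2\bigm|\mathcal{F}^\eps_{n-1}\bigr)\leq 576\,\eps,
\]
and the supremum inside the expectation is bounded a.s.\ by $576\eps$. In each case taking expectations and letting $\eps\to 0$ closes the argument.

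The proof carries no genuine obstacle beyond correctly reading off the jumps of the time-changed processes from the underlying chain: every estimate reduces to propagating the $O(\sqrt{\eps})$ pathwise bound of Lemma \ref{maxDep} through one or two conditional expectations. The uniform bound of Lemma \ref{maxDep} is much cruder than the refined mean/variance estimates of Lemmas \ref{EstimT}--\ref{EstimS}, but it is precisely the a.s.\ (rather than $L^1$ or $L^2$) nature of that bound that makes the uniform-in-$t$ supremum trivial to control and lets the proof avoid any delicate analysis.
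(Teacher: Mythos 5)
Your proof is correct and follows essentially the same approach as the paper: identify that all three processes jump only at reflection times, pull back to the discrete chain, and apply the deterministic pathwise bound from Lemma \ref{maxDep}. One small simplification worth noting: since $\Delta A^\eps_n=\E\bigl((M^\eps_n-M^\eps_{n-1})^2\mid\mathcal F^\eps_{n-1}\bigr)=\var(\bal^\eps_n-\bal^\eps_{n-1}\mid\mathcal F^\eps_{n-1})\leq\E\bigl((\bal^\eps_n-\bal^\eps_{n-1})^2\mid\mathcal F^\eps_{n-1}\bigr)\leq 144\eps$, the variance-versus-second-moment inequality gives $144\eps$ directly in place of your $576\eps$; both are $O(\eps)$, so the conclusion is unaffected.
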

\begin{proof}
The quantities $\left\vert\bal_{N^\eps(\zeta(\eps, t))}^\eps -\bal_{N^\eps(\zeta(\eps, t-))}^\eps\right\vert^2 $, $\vert\bB^\eps_t-\bB^\eps_{t-} \vert^2$ and $\vert\bA^\eps_t-\bA^\eps_{t-}\vert $ can be non-zero  only if $\zeta(\eps, t)=\mathcal{T}_k^\eps$ for some $k\geq 1$. It follows from Lemma \ref{maxDep} and definitions of these quantities that they are bounded by $144\eps$. Since this bound is deterministic, the lemma follows.
\end{proof}

\begin{lemma}\label{NbrJumps} There exists a  constant $c$, depending only on $\|g'\|_\infty$,  such that for any $\epsilon\leq \epsilon_0$,
\begin{equation}\label{sautmoyen}
\mathbb{E}\big(N^\epsilon(t)\big)\leq c(t+2\eps)/\epsilon.
\end{equation}
 In particular, $N^\epsilon (t)$ is finite almost-surely.
\end{lemma}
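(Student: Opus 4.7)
The plan is to adapt the argument of Lemma \ref{EstimNumberBounce} to the perturbed setting, replacing the exact identity $p_\eps = \eps$ used there with the uniform upper bound supplied by Lemma \ref{probReste}. Two geometric facts serve as the basis. First, by assumption \bho{} and \eqref{d2.1}, every point of $\Gamma_\eps^0$ lies at distance $\geq 1$ from the origin while every point of $\Gamma_\eps^1$ lies at distance $\leq 1-\eps$; so by the triangle inequality any chord with one endpoint on each curve has length at least $\eps$, and since the light ray travels at unit speed, any two consecutive reflections on different boundary components are separated by time $\geq \eps$. Second, by Lemma \ref{convex} every inner reflection is followed by an outer reflection, and by Lemma \ref{probReste} the probability of staying on the outer boundary at a single step is uniformly bounded by $q := \eps(4+6\|g'\|_\infty)$.

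I will then partition the reflections into cycles: the $k$-th cycle consists of one inner reflection followed by $X_k^\eps \geq 1$ consecutive outer reflections, the last of which is the outer-to-inner crossing that ends the cycle. Letting $U(n)$ denote the total number of reflections after $n$ complete cycles,
$$U(n) = n + \sum_{k=1}^n X_k^\eps.$$
Although the $X_k^\eps$ are no longer i.i.d.\ (their law depends on the angular coordinate at the start of the cycle), the uniform upper bound $p_\eps(\alpha) \leq q$ from Lemma \ref{probReste} implies that, conditionally on the past at the start of the cycle, each $X_k^\eps$ is stochastically dominated by a geometric random variable on $\{1,2,\dots\}$ with success parameter $1-q$. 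Consequently $\E(X_k^\eps \mid \mathcal F^\eps_{U(k-1)}) \leq 1/(1-q)$.

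During $n$ cycles, the ray executes $n$ inner-to-outer plus $n$ outer-to-inner crossings, each of time-length at least $\eps$, so the elapsed time is at least $2n\eps$. Hence $N^\eps(t) \leq U(\lceil t/(2\eps)\rceil)$, and
$$\E N^\eps(t) \leq \lceil t/(2\eps)\rceil\bigl(1 + (1-q)^{-1}\bigr).$$
Shrinking $\eps_0$ if necessary, in a way depending only on $\|g'\|_\infty$, so as to ensure $q \leq 1/2$, this gives $\E N^\eps(t) \leq 3\lceil t/(2\eps)\rceil \leq c(t+2\eps)/\eps$ for a constant $c$ depending only on $\|g'\|_\infty$. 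The a.s.\ finiteness of $N^\eps(t)$ is then immediate. The only place where the argument is not a verbatim copy of Lemma \ref{EstimNumberBounce} is the stochastic domination step; this is the main obstacle, but it is mild, being a clean consequence of the uniform upper bound already established in Lemma \ref{probReste}.
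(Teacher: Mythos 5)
Your proposal is correct and follows essentially the same route as the paper: both use the $\eps$-lower bound on the chord length between the two boundary components to convert time into a bound on the number of inner$\leftrightarrow$outer crossings, and both replace the exact identity $p_\eps = \eps$ from the annulus case by the uniform upper bound from Lemma~\ref{probReste} to stochastically dominate the cycle lengths by i.i.d.\ geometrics. The one place you are more explicit than the paper is the stochastic-domination step (the paper simply writes the $X_k^\eps$ as i.i.d.\ after asserting stochastic majorization), and you also flag the need to shrink $\eps_0$ so that $q<1$; these are reasonable clarifications but not a different argument.
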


\begin{proof}
Assumption \bho{} implies that the distance between $\Gamma^0_\eps$ and $\Gamma^1_\eps$ is at least $\eps$.
The light ray travels at speed 1 so it takes at least $\eps$ units of time 
between any two consecutive reflections that don't take place on the same piece of the boundary. Thus $n$ crossings from the inner to the outer boundary and $n$ crossings from the outer to the inner boundary must take at least $2n\epsilon$ units of time. 
 It follows that $N^\eps(t) $ is stochastically majorized by $ U(\lceil t/(2\epsilon)\rceil)$, where 
\begin{equation*}
  U(n)=n +\sum_{k=1}^n X^\epsilon_{k},
\end{equation*}
and $X^\eps_k$ are i.i.d. random variables with the geometric distribution (taking values $1,2, \dots$) with parameter $1- \eps\left(4 +6 \|g'\|_\infty\right)$ (see Lemma \ref{probReste}).
Therefore,
\begin{align*}
\mathbb{E}\left(N^\epsilon(t)\right)
&\leq \mathbb{E}\left(U\left(\lceil t/(2\epsilon)\rceil
\right)\right)
=  \lceil t/(2\epsilon)\rceil +  \lceil t/(2\epsilon)\rceil \frac 1 {1-\eps\left(4 +6 \|g'\|_\infty\right)}\\
&= \lceil t/(2\epsilon)\rceil \frac {2-\eps\left(4 +6 \|g'\|_\infty\right)} {1-\eps\left(4 +6 \|g'\|_\infty\right)}
\leq 
\left( \frac t{2\eps} +1\right) \cdot c_1
\leq c(t+2\eps)/\epsilon.
\end{align*}
\end{proof}

\begin{lemma}\label{KolmogDoob} 
Suppose that $\{X_t\}_{t\geq 0}$  is a martingale and let $\tau$ be a  stopping time such that $\E \tau<\infty$. Then for $a>0$,
\begin{equation}\label{KDmaxInegST}
\mathbb{P}\left(\sup_{0\leq t\leq \tau}|X_t|\geq a \right)\leq 
\sup_{s\geq0}\frac{2}{a^2}\mathbb{E}\left(X_{s\land\tau}^2\right).
\end{equation}
\end{lemma}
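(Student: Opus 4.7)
The plan is to derive \eqref{KDmaxInegST} from the classical Doob $L^2$ maximal inequality on bounded deterministic intervals, followed by a monotone passage to the limit that handles the possibly unbounded stopping time $\tau$.

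Set $Y_t := X_{t \wedge \tau}$. By optional stopping (and the hypothesis $\E \tau < \infty$), $Y$ is again a martingale, with the same filtration as $X$. For each fixed $T \geq 0$, Doob's $L^2$ maximal inequality applied to $Y$ on $[0,T]$ gives
\begin{equation*}
\P\left(\sup_{0 \leq t \leq T}|Y_t| \geq a\right) \leq \frac{1}{a^2}\E\left(Y_T^2\right) = \frac{1}{a^2}\E\left(X_{T \wedge \tau}^2\right) \leq \frac{2}{a^2}\sup_{s \geq 0}\E\left(X_{s \wedge \tau}^2\right).
\end{equation*}
Since $Y_t = X_{t \wedge \tau}$, the left-hand side equals $\P(\sup_{0 \leq t \leq T \wedge \tau}|X_t| \geq a)$, so this is already an inequality of the desired shape on a truncated window.

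To conclude, I let $T \to \infty$. Since $\E \tau < \infty$ implies $\tau < \infty$ almost surely, $T \wedge \tau \uparrow \tau$ as $T \to \infty$, and the events $\{\sup_{0 \leq t \leq T \wedge \tau}|X_t| \geq a\}$ form an increasing sequence whose union is $\{\sup_{0 \leq t \leq \tau}|X_t| \geq a\}$. Because the right-hand side of the displayed inequality is independent of $T$, continuity from below of $\P$ yields \eqref{KDmaxInegST}.

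The argument contains no real obstacle, since both ingredients are entirely classical. The only point that requires mild care is the verification that $Y$ remains a martingale under the unbounded stopping time $\tau$, which is exactly what $\E \tau < \infty$ buys us (together with the implicit right-continuity of paths in the paper's applications); the factor $2$ on the right-hand side is merely a convenient cushion and is not essential for the bound.
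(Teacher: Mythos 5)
Your proof is correct and follows essentially the same route as the paper: stop the martingale at $\tau$, invoke Doob's maximal inequality on the stopped process over $[0,T]$, then let $T \to \infty$ using continuity from below of $\P$. You are also right that the sharp constant from the submartingale maximal inequality is $1/a^2$, so the factor $2$ in the statement is simply extra slack the paper carries.
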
 

\begin{proof}
Let $M_t = X_{t\land \tau}$. By the optional stopping theorem, $\{M_t\}_{t\geq0}$ is a martingale and $\{|M_t|\}_{t\geq 0}$ is a positive submartingale. By Doob's inequality, for any fixed $s>0$,
\begin{align}\label{j2.5}
\P \left(\sup_{0\leq t\leq s}|X_{t\land\tau}|\geq a \right)\leq \frac{2}{a^2}\mathbb{E}\left(X_{s\land\tau}^2\right).
\end{align}
Events $\left\{\sup_{0\leq n\leq k}|X_{n\land\tau}|\geq a \right\}$ converge monotonically to $\left\{\sup_{0\leq n\leq \tau}|X_n|\geq a \right\}$ when $k\to \infty$ so the left hand side of \eqref{j2.5} converges to the left hand side of \eqref{KDmaxInegST}. 
\end{proof}

\begin{lemma}\label{C1bis}
 For any $T>0$, 
$$\sup_{0\leq t\leq T}\left\vert\bB^\eps_t-\int_0^t  h'(\bal_{N^\eps(\zeta(\eps, s))}^\eps)ds\right\vert$$ 
converges to $0$ in probability when $\epsilon\rightarrow 0$.
\end{lemma}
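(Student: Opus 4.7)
The plan is to rewrite both $\bB^\eps_t$ and the integral as sums indexed by the reflection number $k$, match them at leading order using the asymptotic estimates already established, and reduce the remaining fluctuation to a martingale controlled via Lemma \ref{KolmogDoob}.

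First, conditioning in \eqref{InducedMCgeneralise} on $\mathcal{F}^\eps_n$ and using the independence in Definition \ref{o18.1b} gives
\begin{align*}
\Delta B^\eps_{n+1}
= \bs^\eps_n\, \E T^\eps_{n+1}(\bal^\eps_n)
 + (1-\bs^\eps_n)\bigl[p_\eps(\bal^\eps_n)\, \E S^\eps_{n+1}(\bal^\eps_n)
 + (1-p_\eps(\bal^\eps_n))\,\E R^\eps_{n+1}(\bal^\eps_n)\bigr].
\end{align*}
Lemmas \ref{EstimT} and \ref{EstimR} show that the $T$ and $R$ contributions both equal $(\eps^2/2)\log(1/\eps)\, h'(\bal^\eps_n)h(\bal^\eps_n)(1+o(1))$ uniformly in $\bal^\eps_n$, while Lemmas \ref{EstimS} and \ref{probReste} give $|p_\eps\cdot \E S^\eps| = O(\eps^2)$, negligible against $\eps^2\log(1/\eps)$. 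Hence $\Delta B^\eps_{n+1} = \tfrac{\eps^2}{2}\log(1/\eps)\,h'(\bal^\eps_n)h(\bal^\eps_n)+o(\eps^2\log(1/\eps))$ uniformly; since $N^\eps(\zeta(\eps,T))=O(1/(\eps^2\log(1/\eps)))$ by Lemma \ref{NbrJumps}, summing the $o$ term contributes at most $o(1)$ to $\bB^\eps_t$ uniformly in $t\in[0,T]$.

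Second, $N^\eps(\zeta(\eps,\cdot))$ is piecewise constant, equal to $k$ on $[\tfrac{\eps\log(1/\eps)}{\pi}\calT^\eps_k,\tfrac{\eps\log(1/\eps)}{\pi}\calT^\eps_{k+1})$, so
\begin{align*}
\int_0^t h'(\bal^\eps_{N^\eps(\zeta(\eps,s))})\,ds
 = \sum_{k=0}^{N^\eps(\zeta(\eps,t))-1} h'(\bal^\eps_k)\,\tfrac{\eps\log(1/\eps)}{\pi}\,\Delta\calT^\eps_{k+1} + \rho^\eps(t),
\end{align*}
with $|\rho^\eps(t)|\leq \|h'\|_\infty\,\tfrac{\eps\log(1/\eps)}{\pi}\sup_k\Delta\calT^\eps_k = O(\eps^{3/2}\log(1/\eps))\to 0$, since consecutive reflection points are at distance $O(\sqrt{\eps})$ by Lemma \ref{maxDep}. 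Writing $\Delta\calT^\eps_{k+1} = \chi^\eps_{k+1} + (\Delta\calT^\eps_{k+1}-\chi^\eps_{k+1})$ and invoking Lemma \ref{NormParam} (so $\chi^\eps_{k+1} = \tfrac{\pi}{2}\eps h(\bal^\eps_k)(1+o(1))$ uniformly), the $\chi^\eps_{k+1}$ portion of the sum matches the leading term of $\bB^\eps_t$ up to an overall $o(1)$.

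It remains to control the martingale
\begin{align*}
\Xi^\eps(n) := \sum_{k=1}^{n} h'(\bal^\eps_{k-1})\,\tfrac{\eps\log(1/\eps)}{\pi}(\Delta\calT^\eps_k-\chi^\eps_k).
\end{align*}
Calculations analogous to \eqref{o21.6}--\eqref{o21.5} show $\E((\Delta\calT^\eps_k)^2\mid\mathcal{F}^\eps_{k-1})=O(\eps^2\log(1/\eps))$, so the predictable quadratic variation at step $n$ is bounded by $\|h'\|_\infty^2 (\eps\log(1/\eps)/\pi)^2 n\cdot O(\eps^2\log(1/\eps))$. At the stopping time $n=N^\eps(\zeta(\eps,T))=O(1/(\eps^2\log(1/\eps)))$, this is $O(\eps^2\log^2(1/\eps))\to 0$, and Lemma \ref{KolmogDoob} upgrades this $L^2$ bound to convergence of $\sup_{t\leq T}|\Xi^\eps(N^\eps(\zeta(\eps,t)))|$ to $0$ in probability. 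Since $\bB^\eps_t$ is piecewise constant while the integral oscillates by at most $O(\eps^{3/2}\log(1/\eps))$ between consecutive jumps of $N^\eps(\zeta(\eps,\cdot))$, the $\sup$ in $t$ is essentially attained at the discrete jump times. The main obstacle is the bookkeeping of the $(1+o(1))$ factors: each error of relative size $o(1)$, when multiplied by $O(\eps^2\log(1/\eps))$ per term and summed over $O(1/(\eps^2\log(1/\eps)))$ terms, must be verified to produce only a $o(1)$ total contribution at each step.
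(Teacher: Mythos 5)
Your proposal follows essentially the same route as the paper's proof: rewrite the integral as a Riemann sum over reflection steps, split $\Delta\calT^\eps_{k+1}$ into $\chi^\eps_{k+1}$ plus a martingale increment, match $\Delta B^\eps$ against $\tfrac{\eps\log(1/\eps)}{\pi}h'\chi^\eps$ via the uniform estimates of Lemmas \ref{EstimT}, \ref{EstimR}, \ref{EstimS}, \ref{probReste}, \ref{NormParam}, and control the martingale fluctuation through Lemma \ref{KolmogDoob}. One small difference: you claim $\E((\Delta\calT^\eps_k)^2\mid\calF^\eps_{k-1})=O(\eps^2\log(1/\eps))$ ``by analogy'' with the annulus computations, whereas the paper uses the cruder but immediately available a.s.\ bound $\Delta\calT^\eps_k=O(\eps^{1/2})$ from \eqref{j2.4}, giving $O(\eps)$ for the conditional second moment---which is already enough, since the quadratic variation at $N^\eps(\zeta(\eps,T))$ then vanishes like $O(\eps\log(1/\eps))$; relying on the sharper bound adds work without payoff.
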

\begin{proof}
Recall that $\zeta(\eps, t)=\frac{\pi t}{\eps\log(1/\eps)}$.
Since $N^\eps(\mathcal{T}_k^\eps)=k $, we get by 
a change of variable,
\begin{align}
\int_0^t h'\left(\bal_{N^\eps(\zeta(\eps, s))}^\eps\right)ds 
&= \frac{\eps\log(1/\eps)}{\pi}
\left(\int_0^{\mathcal{T}^\eps_{N^\eps(\zeta(\eps, t))}}
h'\left(\bal_{N^\eps(s)}^\eps\right)ds 
+ \int_{\mathcal{T}^\eps_{N^\eps(\zeta(\eps, t))}}^{\zeta(\eps, t)}h'\left(\bal_{N^\eps(s)}^\eps\right)ds\right)\notag\\
&= \frac{\eps\log(1/\eps)}{\pi}
\sum_{k=0}^{N^\eps(\zeta(\eps, t))-1}
h'(\bal_{k}^\eps)\Delta\calT_{k+1}^\eps\notag \\
&\quad+ \frac{\eps\log(1/\eps)}{\pi}\big(\zeta(\eps, t)-\mathcal{T}^\eps_{N^\eps(\zeta(\eps, t))}\big)h'\left(\bal_{N^\eps(\zeta(\eps, t))}^\eps\right).\label{j1.2}
\end{align}

From \eqref{o22.8b}, we have for any $t>0$,
\begin{align}\label{j1.1}
\mathcal{T}^\eps_{N^\eps(t)}\leq t
\leq \mathcal{T}^\eps_{N^\eps(t)+1}
=\mathcal{T}^\eps_{N^\eps(t)}+\Delta\calT_{N^\eps(t)+1}^\eps.
\end{align}
Assumption \bho{} and Lemma \ref{maxDep} imply that, a.s., for all $k$ and $\eps\leq \eps_0$,
\begin{align}\label{j2.4}
\Delta \calT^\eps_k
=|Q^\eps(\calT^\eps_{k-1}) - Q^\eps(\calT^\eps_{k})|
\leq 2 |\bal^\eps_{k-1} - \bal^\eps_k| + 6 \eps 
\leq 24 \eps^{1/2} + 6\eps
= O(\eps^{1/2}).
\end{align}
This and \eqref{j1.1} imply that $\zeta(\eps, t)-\mathcal{T}^\eps_{N^\eps(\zeta(\eps, t))}= O(\eps^{1/2})$.
Since $\Vert h'\Vert_\infty<\infty$, we have uniformly in $\alpha\in[0,2\pi)$ and $t\geq 0$,
\begin{align}\label{T1}
\frac{\eps\log(1/\eps)}{\pi}\big(\zeta(\eps, t)-\mathcal{T}^\eps_{N^\eps(\zeta(\eps, t))}\big)
\left\vert h'\left(\alpha\right)\right\vert
=O\left(\eps^{3/2}\log(1/\eps)\right).
\end{align} 
We combine this with  \eqref{j1.2} to see that
\begin{align}\label{DecomposeDrift}
&\left\vert \bB^\eps_t-\int_0^t h'(\bal_{N^\eps(\zeta(\eps, s))}^\eps)ds \right\vert\\ 
&\leq \left\vert \sum_{k=0}^{N^\eps(\zeta(\eps, t))-1} \Delta B^\eps(\bal_{k+1}^\eps,\bs_{k+1}^\eps)-\frac{\eps\log(1/\eps)}{\pi}h'(\bal_k^\eps)\Delta\calT_{k+1}^\eps\right\vert
+O\big(\eps^{\frac{3}{2}}\log(1/\eps)\big)\notag\\
&\leq \left\vert \sum_{k=0}^{N^\eps(\zeta(\eps, t))-1} \Delta B^\eps(\bal_{k+1}^\eps,\bs_{k+1}^\eps)-\frac{\eps\log(1/\eps)}{\pi}h'(\bal_k^\eps)\chi_{k+1}^\eps\right\vert \notag\\
&\quad +\frac{\eps\log(1/\eps)}{\pi}
\left\vert \sum_{k=0}^{N^\eps(\zeta(\eps, t))-1}h'(\bal_k^\eps)(\chi_{k+1}^\eps-\Delta\calT_{k+1}^\eps)\right\vert
+O\big(\eps^{\frac{3}{2}}\log(1/\eps)\big).\notag
\end{align}

We have
\begin{align}\label{j2.1}
&\left\vert \sum_{k=0}^{N^\eps(\zeta(\eps, t))-1} \Delta B^\eps(\bal_{k+1}^\eps,\bs_{k+1}^\eps)-\frac{\eps\log(1/\eps)}{\pi}h'(\bal_k^\eps)\chi_{k+1}^\eps\right\vert\\
&\leq \frac{\eps^{2}\log(1/\eps)}{2} 
\sum_{k=0}^{N^\eps(\zeta(\eps, t))-1}
\left\vert h'(\bal_k^\eps) h(\bal_k^\eps)
\left(\frac{\Delta B^\eps(\bal_k^\eps,\bs_k^\eps)}{(1/2)\eps^2\log(1/\eps)h'(\bal_k^\eps)h(\bal_k^\eps)}
-\frac{2\chi_{k+1}^\epsilon}{\pi\eps h(\bal_k^\eps)}\right)\right\vert\notag\\
&\leq \frac{\eps^{2}\log(1/\eps)}{2} N^\eps(\zeta(\eps, t))
\sup_{\alpha\in\R ,s\in\{0,1\}}
\left\vert h'(\alpha)h(\alpha)\left(\frac{\Delta B^\eps(\alpha,s)}{(1/2)\eps^2\log(1/\eps)h'(\alpha)h(\alpha)}
-\frac{2\chi^\epsilon(\alpha,s)}{\pi\eps h(\alpha)} \right)\right\vert.
\notag
\end{align}
Lemmas \ref{probReste}, \ref{EstimT}, \ref{EstimR} and \ref{EstimS} imply that the following limit holds uniformly in $\alpha $ and $s$,
\begin{align}\label{j2.2}
\lim_{\eps\to0} \frac{\Delta B^\eps(\alpha,s)}{(1/2)\eps^2\log(1/\eps)h'(\alpha)h(\alpha)} =1.
\end{align}
Lemma \ref{NormParam} implies 
that the following limit holds uniformly in $\alpha $ and $s$,
\begin{align*}%\label{j2.3}
\lim_{\eps\to0} \frac{2\chi^\epsilon(\alpha,s)}{\pi\eps h(\alpha)} =1.
\end{align*}
This and \eqref{j2.1}-\eqref{j2.2} imply that
\begin{align}\label{j2.3}
&\left\vert \sum_{k=0}^{N^\eps(\zeta(\eps, t))-1} \Delta B^\eps(\bal_{k+1}^\eps,\bs_{k+1}^\eps)-\frac{\eps\log(1/\eps)}{\pi}h'(\bal_k^\eps)\chi_{k+1}^\eps\right\vert
\leq \frac{\eps^{2}\log(1/\eps)}{2} N^\eps(\zeta(\eps, t))o(1).
\end{align}

Since $t\mapsto N^\eps(t)$ is a non-decreasing function, Lemma \ref{NbrJumps} implies that for some $c_1$ and all $t\leq T$,
\begin{align*}
\E N^\eps(\zeta(\eps, t)) \leq \E N^\eps(\zeta(\eps, T)) \leq c_1 \zeta(\eps, T)/\eps
= c_1\frac{\pi T}{\eps^2\log(1/\eps)}.
\end{align*}
It follows from this and \eqref{j2.3} that
\begin{align}\label{T2}
\lim_{\eps\to0}\mathbb{E}\left(\sup_{0\leq t\leq T}
\left\vert \sum_{k=0}^{N^\eps(\zeta(\eps, t))-1} \Delta B^\eps(\bal_{k+1}^\eps,\bs_{k+1}^\eps)-\frac{\eps\log(1/\eps)}{\pi}h'(\bal_k^\eps)\chi_{k+1}^\eps\right\vert
\right) =0.
\end{align}
This and \eqref{DecomposeDrift} imply that it will suffice to show that
\begin{align}\label{j2.7}
\sup_{0\leq t\leq T}\frac{\eps\log(1/\eps)}{\pi}
\left\vert \sum_{k=0}^{N^\eps(\zeta(\eps, t))-1}h'(\bal_k^\eps)(\chi_{k+1}^\eps-\Delta\calT_{k+1}^\eps)\right\vert \to 0
\end{align}
in probability as $\eps\to0$.

Recall that $\chi_{k+1}^\eps = \mathbb{E}\left(\Delta\calT_{k+1}^\eps\mid \mathcal{F}_{k}^\eps\right)$.
Let $(\mathcal{M}_\eps(n))_{n\geq 0}$ be  defined by $\mathcal{M}_\eps(0)=0$ and for $n\geq 1$ by
\begin{align}\label{j2.6}
\mathcal{M}_\eps(n)&= \frac{\eps\log(1/\eps)}{\pi} \sum_{k=0}^{n-1}h'(\bal_k^\eps)\Big(\Delta\calT_{k+1}^\eps
-\mathbb{E}\big(\Delta\calT_{k+1}^\eps\mid \mathcal{F}_{k}^\eps\big)\Big).
\end{align}
Then $(\mathcal{M}_\eps(n))_{n\geq 0}$ is a martingale and its quadratic variation is given by
\begin{align*}
\big\langle \mathcal{M}_\eps\big\rangle_n&= \frac{\eps^{2}\log^2(1/\eps)}{\pi^2} \sum_{k=0}^{n-1} \left(h'(\bal_k^\eps)\right)^2
\var\left(\Delta\calT_{k+1}^\eps\mid \mathcal{F}_{k}^\eps\right)\\
&\leq \frac{\eps^{2}\log^2(1/\eps)\Vert h'\Vert_\infty^2}{\pi^2} \sum_{k=0}^{n-1} \mathbb{E}\big((\Delta\calT_{k+1}^\eps)^2\mid \mathcal{F}_{k}^\eps\big).
\end{align*}
By \eqref{j2.4}, a.s., for some $c_2$ and all $k$,
$\E \left((\Delta\calT_{k+1}^\eps)^2\mid \mathcal{F}_{k}^\eps\right)
\leq c_2 \eps$.
This implies that for some $c_3$,
\begin{align}\label{Ec}
\big\langle \mathcal{M}_\eps\big\rangle_n \leq c_3\eps^3\log^2(1/\eps)n.
\end{align}
By Lemma \ref{NbrJumps}, $N^\eps(\zeta(\eps, t))$ is a stopping time with a finite expectation so by the optional stopping theorem and estimates \eqref{sautmoyen} and \eqref{Ec}, for any $s>0$,
\begin{align*}
\mathbb{E}\left(\mathcal{M}_\eps^2\big(s\land N^\eps(\zeta(\eps, t))\big)\right)
&=\mathbb{E}\left(\big\langle \mathcal{M}_\eps\big\rangle_{s\land N^\eps(\zeta(\eps, t))}\right)
\leq c_3\eps^3\log^2(1/\eps)\mathbb{E}\left(s\land N^\eps(\zeta(\eps, t))\right)\notag\\
&\leq c_4\eps^3\log^2(1/\eps)\Big(\frac{\pi t}{\eps\log(1/\eps)}\frac{1}{\eps}+2\Big)\overset{\eps\rightarrow 0}{\longrightarrow} 0.
\end{align*} 
We see that the assumptions of  
Lemma \ref{KolmogDoob} are satisfied and we can use that lemma as follows. For any $a>0$,
\begin{align}\label{T3}
\mathbb{P}\left(\sup_{0\leq t\leq T} \vert \mathcal{M}_\eps(N^\epsilon(\zeta(\eps, t)) \vert \geq a \right)
\leq \sup_{s>0}
\frac{2}{a^2}\mathbb{E}\left( \mathcal{M}^2_\eps(N^\epsilon(\zeta(\eps, t))  \right)\overset{\epsilon\rightarrow 0}{\longrightarrow} 0.
\end{align}
The claim \eqref{j2.7} is proved and, therefore, so is the lemma.
\end{proof}

Recall from \eqref{d25.2} that 
$\Delta A_{n+1}^\epsilon =\mathbb{E}\left(\left(M_{n+1}^\epsilon -M_n^\epsilon\right)^2\mid \mathcal{F}_n^\epsilon\right)$.
We will write 
\begin{align*}
\Delta A^\eps(\alpha, s) =\mathbb{E}\left(\left(M_{n+1}^\epsilon -M_n^\epsilon\right)^2\mid \bal_n^\eps = \alpha,\bs_n^\eps=s\right)
\end{align*}
to emphasize the dependence on $\bal_n^\eps$ and $\bs_n^\eps$.
We will also use the self-explanatory notation
$\Delta A^\eps(\bal_n^\eps,\bs_n^\eps)$.

\begin{lemma}\label{j3.1} 
The following limit holds uniformly in $\alpha$ and $s$,
\begin{align}\label{j3.2}
\lim_{\eps\rightarrow 0}\frac{\Delta A^\eps(\alpha,s)}{(\epsilon^2/2)\log(1/\eps)}&=h^2(\alpha).
\end{align}
\end{lemma}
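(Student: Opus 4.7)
The plan is to reduce \eqref{j3.2} to the estimates already proved in Lemmas \ref{EstimT}, \ref{EstimR}, \ref{EstimS}, together with the order estimates on $p_\eps(\alpha)$ and on the size of the jumps. The starting point is the identity
\begin{equation*}
\Delta A^\eps(\alpha,s)
= \E\!\left( (M^\eps_{n+1}-M^\eps_n)^2 \mid \bal^\eps_n=\alpha,\bs^\eps_n=s\right)
= \var\!\left(\bal^\eps_{n+1}-\bal^\eps_n \mid \bal^\eps_n=\alpha,\bs^\eps_n=s\right),
\end{equation*}
since $M^\eps_{n+1}-M^\eps_n$ is the conditionally centred version of the increment. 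Thus we must compute, uniformly in $\alpha$, the conditional variance of the jump in the representation \eqref{InducedMCgeneralise}, separately for $s=1$ and $s=0$.

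For $s=1$ the increment conditional on $(\bal^\eps_n,\bs^\eps_n)=(\alpha,1)$ is exactly $T^\eps_1(\alpha)$, so $\Delta A^\eps(\alpha,1)=\var(T^\eps_1(\alpha))$ and \eqref{j3.2} is precisely the content of \eqref{VarT} in Lemma \ref{EstimT}. This case is immediate.

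For $s=0$, using independence of $\Lambda^\eps_1(\alpha)$, $S^\eps_1(\alpha)$ and $R^\eps_1(\alpha)$ in Definition \ref{o18.1b}, write $X=\Lambda S+(1-\Lambda)R$ with the self-explanatory shorthand. Then
\begin{equation*}
\E(X^2) = p_\eps(\alpha)\,\E(S^2) + (1-p_\eps(\alpha))\,\E(R^2),
\qquad
\E(X) = p_\eps(\alpha)\,\E(S) + (1-p_\eps(\alpha))\,\E(R).
\end{equation*}
The plan is to show that only the $R$--contribution to $\E(X^2)$ survives after normalisation by $(\eps^2/2)\log(1/\eps)$. Specifically: Lemma \ref{probReste} gives $p_\eps(\alpha)=O(\eps)$, and Lemma \ref{maxDep} gives $|S^\eps_1(\alpha)|\leq 12\sqrt{\eps}$, so
$p_\eps(\alpha)\,\E(S^2)=O(\eps^2)=o(\eps^2\log(1/\eps))$.
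For the $R$--term, Lemma \ref{EstimR} yields $\var(R^\eps_1(\alpha))/((\eps^2/2)\log(1/\eps))\to h^2(\alpha)$ and $\E(R^\eps_1(\alpha))=O(\eps^2\log(1/\eps))$, so $(1-p_\eps(\alpha))\,\E(R^2)$ is $\var(R)$ up to a relative error of order $\eps$ plus the negligible term $(\E R)^2=O(\eps^4\log^2(1/\eps))$. Finally, Lemma \ref{EstimS} together with the above gives $\E(X)=O(\eps^2\log(1/\eps))$, hence $(\E X)^2=o(\eps^2\log(1/\eps))$.

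Collecting these bounds gives $\Delta A^\eps(\alpha,0)=\var(R^\eps_1(\alpha))+o(\eps^2\log(1/\eps))$, and dividing by $(\eps^2/2)\log(1/\eps)$ yields the limit $h^2(\alpha)$. Uniformity in $\alpha$ is inherited directly from the uniform statements of Lemmas \ref{EstimT}, \ref{EstimR}, \ref{EstimS}, \ref{probReste} and \ref{maxDep}. There is no real obstacle: the only point to watch is that although $S^\eps_1$ can be of order $\sqrt{\eps}$, which is much larger than $T^\eps_1$ or $R^\eps_1$, the $S$--contribution is suppressed by the factor $p_\eps(\alpha)=O(\eps)$, producing an $O(\eps^2)$ term that is negligible against $\eps^2\log(1/\eps)$.
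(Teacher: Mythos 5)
Your proof is correct and takes essentially the same approach as the paper: identify $\Delta A^\eps(\alpha,s)$ with the conditional variance of the jump, read off the $s=1$ case from \eqref{VarT}, and for $s=0$ decompose via the representation \eqref{InducedMCgeneralise}, showing that the $S$-contribution is $O(\eps^2)$ (being suppressed by $p_\eps(\alpha)=O(\eps)$ despite $|S^\eps_1|=O(\sqrt\eps)$) and is therefore negligible against the $R$-contribution, which Lemma \ref{EstimR} controls. The paper expands $(\Lambda S+(1-\Lambda)R)^2$ and notes the cross term vanishes; your version records the same fact as $\E(X^2)=p_\eps\E(S^2)+(1-p_\eps)\E(R^2)$, but the bookkeeping and the lemmas invoked are identical.
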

\begin{proof}
Definition \eqref{d25.2} yields
\begin{align*}
\Delta A^\eps(\alpha,s)
&= \E\left(\left(M_{n+1}^\epsilon -M_n^\epsilon\right)^2\mid \bal_n^\eps = \alpha,\bs_n^\eps=s\right)\\
&= \E\left(\left(\bal_{n+1}^\epsilon -\bal_n^\epsilon - \Delta B^\eps(n+1)\right)^2\mid \bal_n^\eps = \alpha,\bs_n^\eps=s\right)\\
&= \var\left(\bal_{n+1}^\epsilon -\bal_n^\epsilon
\mid \bal_n^\eps = \alpha,\bs_n^\eps=s\right).
\end{align*}
For $s=1$, \eqref{VarT} implies that
\begin{align}\label{j3.3}
\lim_{\eps\rightarrow 0}\frac{\Delta A^\eps(\alpha,1)}{(\epsilon^2/2)\log(1/\eps)}
=
\lim_{\eps\rightarrow 0}\frac{\var\left(\bal_{n+1}^\epsilon -\bal_n^\epsilon
\mid \bal_n^\eps = \alpha,\bs_n^\eps=1\right)}{(\epsilon^2/2)\log(1/\eps)}
=
\lim_{\eps\rightarrow 0}\frac{\var(T_1^\eps(\alpha))}{(\epsilon^2/2)\log(1/\eps)}
=h^2(\alpha).
\end{align}

Next we consider the case $s=0$.
Recall \eqref{InducedMCgeneralise}.
By Lemmas \ref{probReste} and \ref{maxDep}, for all $\alpha$, $\E \left(\left( \Lambda_{1}^\epsilon(\alpha) S_{1}^\epsilon(\alpha) \right)^2\right)=O( \eps^2)$.
By Lemmas \ref{probReste} and \ref{EstimS}, for all $\alpha$, $\left|\E \big( \Lambda_{1}^\epsilon(\alpha) S_{1}^\epsilon(\alpha)\big) \right|= O( \eps^2)$.
Hence, in view of Lemma \ref{EstimR},
\begin{align}\label{j3.4}
&\left|\E \big(\Lambda_{1}^\epsilon(\alpha)S_{1}^\epsilon(\alpha) +(1-\Lambda_{1}^\epsilon(\alpha))R_{1}^\epsilon (\alpha)\big)\right|\\
&\quad= (1 + o(1))(\epsilon^2/2)\log(1/\eps) \left|h'(\alpha)h(\alpha)\right|(1-O(\eps))+O(\eps^2).\notag
\end{align}
By Lemma \ref{EstimR} and the Cauchy-Schwartz inequality,
\begin{align*}
&\E \left(\left(\Lambda_{1}^\epsilon(\alpha)S_{1}^\epsilon(\alpha) +(1-\Lambda_{1}^\epsilon(\alpha))R_{1}^\epsilon (\alpha)\right)^2 \right)\\
& = \E \left(\left( \Lambda_{1}^\epsilon(\alpha) S_{1}^\epsilon(\alpha) \right)^2\right)
+ \E \left(\left( (1-\Lambda_{1}^\epsilon(\alpha))R_{1}^\epsilon (\alpha) \right)^2\right)
+ 2 \E \left(\Lambda_{1}^\epsilon(\alpha) S_{1}^\epsilon(\alpha)(1-\Lambda_{1}^\epsilon(\alpha))R_{1}^\epsilon (\alpha) \right)\\
& = O(\eps^2)
+ (1 + O(\eps))(\epsilon^2/2)\log(1/\eps) h^2(\alpha)
+ 0\\%O\left(\eps^2 \log^{1/2}(1/\eps)\right)\\
&= (1 + O(\eps))(\epsilon^2/2)\log(1/\eps) h^2(\alpha).
\end{align*}
This and \eqref{j3.4} imply that 
\begin{align*}
\var \left(\Lambda_{1}^\epsilon(\alpha)S_{1}^\epsilon(\alpha) +(1-\Lambda_{1}^\epsilon(\alpha))R_{1}^\epsilon (\alpha)\right)
= (1 + O(\eps))(\epsilon^2/2)\log(1/\eps) h^2(\alpha)+o\big(\eps^2\log(1/\eps)\big).
\end{align*}
Hence,
\begin{align*}%\label{j3.3}
\lim_{\eps\rightarrow 0}\frac{\Delta A^\eps(\alpha,0)}{(\epsilon^2/2)\log(1/\eps)}
&=
\lim_{\eps\rightarrow 0}\frac{\var\left(\bal_{n+1}^\epsilon -\bal_n^\epsilon
\mid \bal_n^\eps = \alpha,\bs_n^\eps=0\right)}{(\epsilon^2/2)\log(1/\eps)}\\
&=
\lim_{\eps\rightarrow 0}\frac{\var\left(\Lambda_{1}^\epsilon(\alpha)S_{1}^\epsilon(\alpha) +(1-\Lambda_{1}^\epsilon(\alpha))R_{1}^\epsilon (\alpha)\right)}{(\epsilon^2/2)\log(1/\eps)}
=h^2(\alpha).
\end{align*}
In  view of \eqref{j3.3}, the proof is complete.
\end{proof}

\begin{lemma}\label{C6bis} 
For any $T>0$,
$\sup_{t\leq T}\left\vert \bA^\epsilon_t-\int_0^t h(\bal_{N^\eps(\zeta(\eps, s))}^\eps)ds\right\vert $ converges to $0$ in probability when $\epsilon\rightarrow 0$.
\end{lemma}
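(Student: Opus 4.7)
The plan is to mimic the proof of Lemma \ref{C1bis} essentially line by line, replacing the drift coefficient $h'(\alpha)$ by $h(\alpha)$ and the increment $\Delta B^\eps$ by $\Delta A^\eps$. The two asymptotic inputs I would rely on are Lemma \ref{j3.1}, giving $\Delta A^\eps(\alpha,s)\sim(\eps^2/2)\log(1/\eps)\,h^2(\alpha)$ uniformly in $(\alpha,s)$, and Lemma \ref{NormParam}, giving $\chi^\eps(\alpha,s)\sim(\pi/2)\eps\,h(\alpha)$ uniformly in $(\alpha,s)$. Dividing these two expressions yields the key relation $\Delta A^\eps(\alpha,s)\sim\frac{\eps\log(1/\eps)}{\pi}\,h(\alpha)\,\chi^\eps(\alpha,s)$, which is the precise analogue of the relation $\Delta B^\eps\sim\frac{\eps\log(1/\eps)}{\pi}\,h'(\alpha)\,\chi^\eps$ exploited in Lemma \ref{C1bis}.

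First I would make the change of variable $u=\zeta(\eps,s)$ as in the derivation of \eqref{j1.2}, using $N^\eps(u)=k$ on $[\calT^\eps_k,\calT^\eps_{k+1})$, to obtain
\[
\int_0^t h\bigl(\bal^\eps_{N^\eps(\zeta(\eps,s))}\bigr)\,ds = \frac{\eps\log(1/\eps)}{\pi}\sum_{k=0}^{N^\eps(\zeta(\eps,t))-1} h(\bal^\eps_k)\,\Delta\calT^\eps_{k+1} + O\bigl(\eps^{3/2}\log(1/\eps)\bigr),
\]
where the boundary remainder is uniformly $O(\eps^{3/2}\log(1/\eps))$ thanks to \eqref{j2.4} and $\|h\|_\infty<\infty$. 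Subtracting from $\bA^\eps_t=\sum_{k=0}^{N^\eps(\zeta(\eps,t))-1}\Delta A^\eps(\bal^\eps_k,\bs^\eps_k)$ and reorganizing, I would decompose the difference as $I_1+I_2+O(\eps^{3/2}\log(1/\eps))$, where
\begin{align*}
I_1 &= \sum_{k=0}^{N^\eps(\zeta(\eps,t))-1}\Bigl[\Delta A^\eps(\bal^\eps_k,\bs^\eps_k) - \tfrac{\eps\log(1/\eps)}{\pi}\, h(\bal^\eps_k)\,\chi^\eps_{k+1}\Bigr],\\
I_2 &= \tfrac{\eps\log(1/\eps)}{\pi}\sum_{k=0}^{N^\eps(\zeta(\eps,t))-1} h(\bal^\eps_k)\bigl(\chi^\eps_{k+1}-\Delta\calT^\eps_{k+1}\bigr).
\end{align*}

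For $I_1$, the uniform convergence coming from Lemmas \ref{j3.1} and \ref{NormParam} shows that each summand is $o(1)\cdot(\eps^2/2)\log(1/\eps)\,h^2(\bal^\eps_k)$ uniformly in $(\alpha,s)$; combined with the bound $\E N^\eps(\zeta(\eps,T))=O(\eps^{-2}\log^{-1}(1/\eps))$ provided by Lemma \ref{NbrJumps}, this produces $\E\sup_{t\leq T}|I_1|\to 0$, exactly as in the chain \eqref{j2.1}--\eqref{T2}.

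For $I_2$, I would observe that the partial sums
\[
\mathcal{M}^\ast_\eps(n):=\tfrac{\eps\log(1/\eps)}{\pi}\sum_{k=0}^{n-1} h(\bal^\eps_k)\bigl(\Delta\calT^\eps_{k+1}-\chi^\eps_{k+1}\bigr)
\]
form an $\mathcal{F}^\eps_n$-martingale; combining $\|h\|_\infty<\infty$ with the bound $\E\bigl((\Delta\calT^\eps_{k+1})^2\mid\mathcal{F}^\eps_k\bigr)=O(\eps)$ drawn from \eqref{j2.4}, its predictable quadratic variation satisfies $\langle\mathcal{M}^\ast_\eps\rangle_n\leq c\,\eps^3\log^2(1/\eps)\,n$. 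Applying the optional stopping theorem at the stopping time $N^\eps(\zeta(\eps,t))$ and then the maximal inequality of Lemma \ref{KolmogDoob} reproduces verbatim the estimate \eqref{T3}, delivering $\sup_{t\leq T}|I_2|\to 0$ in probability. I do not expect a genuine obstacle at any step: the delicate calculations are all packaged into Lemma \ref{j3.1}, and what remains is a direct transposition of the argument already developed for $\bB^\eps_t$ in Lemma \ref{C1bis}.
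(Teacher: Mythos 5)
Your proposal is correct and follows exactly the route the paper takes: the paper's proof of Lemma~\ref{C6bis} simply states that one should repeat the proof of Lemma~\ref{C1bis} after substituting $\Delta A^\eps$ for $\Delta B^\eps$, $h$ for $h'$, and replacing \eqref{j2.2} by the consequence of Lemma~\ref{j3.1} that $\Delta A^\eps(\alpha,s)/\bigl((1/2)\eps^2\log(1/\eps)\,h^2(\alpha)\bigr)\to 1$ uniformly. Your more detailed write-out of the change of variable, the $I_1/I_2$ decomposition, the use of Lemmas~\ref{j3.1}, \ref{NormParam}, and \ref{NbrJumps} for $I_1$, and the martingale/optional-stopping/Doob argument for $I_2$ is precisely the intended transposition.
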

\begin{proof}
The proof is the same as that for Lemma \ref{C1bis}, except for the following changes.

(i) $\Delta B^\eps(\alpha,s)$ should be replaced with $\Delta A^\eps(\alpha,s)$.

(ii) $h'(\bal_{N^\eps(\zeta(\eps, s))}^\eps)$ should be replaced with $h(\bal_{N^\eps(\zeta(\eps, s))}^\eps)$.

(iii) Formula \eqref{j2.2} should be replaced with the following consequence of Lemma \ref{j3.1},
\begin{align*}%\label{j2.2}
\lim_{\eps\to0} \frac{\Delta A^\eps(\alpha,s)}{(1/2)\eps^2\log(1/\eps)h^2(\alpha)} =1.
\end{align*}
\end{proof}

\begin{proof}[Proof of Theorem \ref{MainThm3} ]

We will prove that
 processes $\left\{\bal^\eps(N^\eps(\zeta(\eps, t))) , t\geq 0\right\}$
converge in law to $X$ in the Skorokhod topology as $\epsilon$ goes to $0$, where $X$ 
solves the stochastic differential equation \eqref{j4.1}.

The above claim implies easily Theorem \ref{MainThm3}
because $\bbet^\eps(\mathcal{T}_k^\eps) =\bal_k^\eps$ and the jumps of $\bal^\epsilon$ are uniformly bounded by a quantity going to $0$ when $\eps\rightarrow 0$, by Lemma \ref{maxDep}.

To prove the claim stated at the beginning of the proof,
we will apply \cite[Thm. 4.1, Ch. 7]{EthierKurtz}. We start with a dictionary translating our notation to that in \cite{EthierKurtz}. In the following list, our symbol is written to the left of the arrow and the corresponding symbol used in \cite{EthierKurtz} is written to the right of the arrow. Note that our family of processes is indexed by a continuous parameter $\eps\in(0,1/2)$ and the corresponding family of processes in \cite{EthierKurtz} is indexed by a discrete parameter $n$. Standard arguments show that nevertheless \cite[Thm. 4.1, Ch. 7]{EthierKurtz} applies in our setting.
\begin{align*}
\bal^\eps(N^\eps(\zeta(\eps, t))) &\La X_n, \\
\bB^\eps_t &\La B_n,\\
\bM^\eps_t &\La M_n,\\
\bA^\eps_t &\La A_n, \\
 h'(\bal^\eps(N^\eps(\zeta(\eps, s)))) &\La b(X_n(s)),\\
 h^2(\bal^\eps(N^\eps(\zeta(\eps, s)))) &\La a(X_n(s)).
\end{align*}

Many of the assumptions of \cite[Thm. 4.1, Ch. 7]{EthierKurtz} are clearly satisfied and, therefore, we will not discuss them explicitly. For example, our assumptions on the smoothness of $h$ are so strong that the martingale problem corresponding to \eqref{j4.1} is well-posed.

We will now review the crucial assumptions of \cite[Thm. 4.1, Ch. 7]{EthierKurtz}. 

Recall notation introduced in \eqref{d25.2}.
Let $\calG^\eps_t = \sigma((\bal^\eps(N^\eps(\zeta(\eps, s))),\bB^\eps_s,\bA^\eps_s), s\leq t)$.
It was proved in Lemma \ref{maxDep} that the absolute value of a jump of $\bal^\eps_n$ is bounded by $12 \sqrt{\eps}$.
The same bound applies to jumps of $B^\eps(n)$  and, therefore, the absolute value of a jump of $M^\eps(n)$ is bounded by $24 \sqrt{\eps}$, a.s.
Hence, it is easy to see from the definition \eqref{d25.2} that $M^\eps(n)$ is a martingale. Let $\tau^\eps_r = \inf\{n: |M^\eps(n)| \geq r\}$ and note that 
$|M^\eps(n)| \leq r+24 \sqrt{\eps}$ for $n\leq \tau^\eps_r$. This easily implies that the optional stopping theorem applies to the martingale $M^\eps(n)$ at the stopping time $N^\eps(\zeta(\eps, t)) \land \tau^\eps_r$, for every $t$ and $r$. This in turn implies that $\bM^\eps_t$ is a $\calG^\eps_t$-local martingale. A similar argument shows that $(\bM^\eps_t)^2 - \bA^\eps_t$ is a $\calG^\eps_t$-local martingale.
We have verified the assumption that processes defined in (4.1) and (4.2) in 
\cite[Ch. 7]{EthierKurtz} are local martingales.

Assumptions (4.3)-(4.5) in 
\cite[Ch. 7]{EthierKurtz} are satisfied due to
Lemma \ref{C3C4C5bis}.
Assumptions (4.6) and (4.7) in 
\cite[Ch. 7]{EthierKurtz} are satisfied due to
Lemmas \ref{C1bis} and \ref{C6bis}.

We have shown that the assumptions of \cite[Thm. 4.1, Ch. 7]{EthierKurtz} are satisfied. Therefore, we may conclude that $\left\{\bal^\eps(N^\eps(\zeta(\eps, t))) , t\geq 0\right\}$
converge in law to $X$ in the Skorokhod topology as $\epsilon$ goes to $0$, where $X$ 
solves the stochastic differential equation \eqref{j4.1}.
We have already pointed out that this implies  Theorem \ref{MainThm3}.
\end{proof}

\bibliographystyle{plain}
\bibliography{tire}

\end{document}